\def\catchline#1#2#3#4#5{\relax} 
\newcommand*\circd[1]{%
  \begin{tikzpicture}
    \node[draw,circle,inner sep=0.5pt]{\scriptsize #1};
  \end{tikzpicture}}
\acrodef{CRF}[CRF]{constitutive response function}
\acrodef{SI}[SI]{International System of Units}
\acrodef{BVP}[BVP]{boundary value problem}
\acrodef{IBVP}[IBVP]{initial-boundary value problem}
\acrodef{FEM}[FEM]{finite element method}
\acrodef{EIT}[EIT]{extension, inflation, and torsion}
\acrodef{ALE}[ALE]{arbitrary {L}agrangian--{E}ulerian}
\acrodef{BDF}[BDF]{backward differentiation formula}
\acrodef{CNS}[CNS]{central nervous system}
\acrodef{CSF}[CSF]{cerebrospinal fluid}
\acrodef{IF}[IF]{interstitial fluid}
\acrodef{ECS}[ECS]{extracellular space}
\acrodef{SAS}[SAS]{subarachnoid space}
\acrodef{PVS}[PVS]{perivascular spaces}
\acrodef{TPE}[TPE]{theorem of power expended}
\def\etal{\emph{et al.\/}}
\newcommand{\scdot}{\ensuremath{\!\cdot\!}}
\newcommand{\Grad}[2][nonsense]{%
\ifthenelse{\equal{#1}{nonsense}}%
{\nabla#2}%
{\nabla_{#1}#2}%
}
\newcommand{\Div}[2][nonsense]{%
\ifthenelse{\equal{#1}{nonsense}}%
{\nabla\scdot#2}%
{\nabla_{#1}\scdot#2}%
}
\newcommand{\Lin}[2][nonsense]{%
\ifthenelse{\equal{#1}{nonsense}}%
{\ensuremath{\mathscr{L}(#2)}}%
{\ensuremath{\mathscr{L}(#1,#2)}}%
}
\DeclareMathOperator{\sym}{sym}
\DeclareMathOperator{\grad}{grad}
\renewcommand{\d}[2][nonsense]{%
\ifthenelse{\equal{#1}{nonsense}}%
{\ensuremath{\mathrm{d}#2}}%
{\ensuremath{\mathrm{d}^{#1}#2}}
}
\newcommand{\bv}[1]{\boldsymbol{\mathbf{#1}}}
\newcommand{\ts}[1]{\text{$\mathsf{#1}$}}
\newcommand{\transpose}[1]{#1^{\scriptscriptstyle\mathrm{T}}}
\newcommand{\inversetranspose}[1]{\ensuremath{#1^{\scriptscriptstyle-\mathrm{T}}}}
\newcommand{\order}[3][nonsense]{%
\ifthenelse{\equal{#1}{nonsense}}%
{\ensuremath{#2^{\scriptscriptstyle(#3)}}}%
{\ensuremath{#2^{\scriptscriptstyle#1(#3)}}}%
}
\newcommand{\colondot}{\mathbin{:}}
\newcommand{\s}{\ensuremath{\mathrm{s}}}
\newcommand{\f}{\ensuremath{\mathrm{f}}}
\newcommand{\flt}{\ensuremath{\mathrm{flt}}}
\newcommand{\frs}{\ensuremath{\mathrm{sf}}}
\newcommand{\va}{\ensuremath{\mathrm{va}}}
\newcommand{\ip}{\mathfrak{p}}
\newcommand{\comsol}{COMSOL Multiphysics\textsuperscript{\textregistered}\xspace}
\begin{document}

\markboth{Francesco Costanzo, Mohammad Jannesari, and Beatrice Ghitti}{Poroelastic flow across a permeable interface}

%
\catchline{}{}{}{}{}
%

\title{Poroelastic flow across a permeable interface:
a Hamilton's principle approach and its finite element implementation}

\author{Francesco Costanzo\footnote{Corresponding author.}}
\address{Center for Neural Engineering, University Park, PA 16802, USA\\
Engineering Science and Mechanics Department, University Park, PA 16802, USA\\fxc8@psu.edu}

\author{Mohammad Jannesari}
\address{Center for Neural Engineering, University Park, PA 16802, USA\\
Engineering Science and Mechanics Department, University Park, PA 16802, USA\\mbj5423@psu.edu}

\author{Beatrice Ghitti}
\address{Center for Neural Engineering, University Park, PA 16802, USA\\
Engineering Science and Mechanics Department, University Park, PA 16802, USA\\
Auckland Bioengineering Institute, The University of Auckland, Auckland 1010, New Zealand\\beatrice.ghitti@auckland.ac.nz}




\maketitle

\begin{abstract}
    We consider fluid flow across a permeable interface within a deformable porous medium. We use mixture theory. The mixture's constituents are assumed to be incompressible in their pure form. We use Hamilton's principle to obtain the governing equations, and we propose a corresponding finite element implementation. The filtration velocity and the pore pressure are allowed to be discontinuous across the interface while some control of these discontinuities is built into the interfacial constitutive behavior. To facilitate the practical implementation of the formulation in a finite element scheme, we introduce a Lagrange multiplier field over the interface for the explicit enforcement of the jump condition of the balance of mass. Our formulation appears to recover some basic results from the literature. The novelty of the work is the formulation of an approach that can accommodate specific constitutive assumptions pertaining to the behavior of the interface that do not necessarily imply the continuity of the filtration velocity and/or of the pore pressure across it.
\end{abstract}

\keywords{Poroelasticity; Jump Conditions; Hamilton's Principle; Finite Element Method.}

\ccode{AMS Subject Classification: 74A99, 76S05, 74S05, 76-10}


\allowdisplaybreaks{                             %

\section{Introduction}
This paper is motivated by our interest in modeling aspects of brain physiology. Poroelasticity can be effective in modeling \ac{IF} flow through brain parenchyma and its interaction with the \ac{CSF} circulation. While this paper is not about brain physiology, we briefly touch upon the problem of identifying the pathways and drivers of waste clearance in the brain to provide some context for what we present herein.

The accumulation of potentially neurotoxic waste products, such as amyloid-$\beta$, in the brain \ac{ECS} has been linked to the pathogenesis of neurodegenerative disorders like Alzheimer's disease.\cite{Selkoe2016The-amyloid-hyp}  The brain does not have a conventional lymphatic system\cite{Abbott2004Evidence-for-bu} responsible for clearing products of metabolism and neurotransmission,\cite{Nedergaard2013Garbage-Truck-o,Rasmussen2022Fluid-transport} and the identification of clearance pathways and driving mechanisms remains a poorly understood aspect of brain physiology and pathology.\cite{Hladky2018Elimination-of-,Hladky2022The-glymphatic-,Abbott2018The-role-of-bra}

The brain and spinal cord are surrounded by \ac{CSF}, which flows through a system of fluid-filled communicating spaces including the \ac{SAS}, ventricular system, central canal of the spinal cord, and the \ac{PVS} surrounding cerebral vessels. Transport of waste by \ac{CSF} flow is acknowledged as a key process in maintaining brain function. Transport of metabolites through parenchyma has traditionally been attributed to Fickian diffusion\cite{Sykova2004Diffusion-prope,Sykova2008Diffusion-in-br,Vargova2011Glia-and-extrac,Verkman2013Diffusion-in-th} eventually leading to mixing with \ac{CSF} and draining into the peripheral lymphatics in the neck.\cite{Bradbury1981Drainage-of-cer,Bradbury1983Factors-influen} More recently, the work by Iliff, Nedergaard, and co-workers\cite{Iliff2012A-Paravascular-,Iliff2013Brain--wide-pat,Iliff2013Cerebral-arteri,Iliff2019The-glymphatic-} has challenged this understanding, pointing to the existence of a directional flow of fluid into and out of the \ac{CNS}. This \emph{circulation}  has been dubbed the \emph{glymphatic system} and is characterized by a significant convective transport and exchange between \ac{IF} and \ac{CSF} along the \ac{PVS}. However, several aspects of the glymphatic hypothesis remain controversial\cite{Hladky2018Elimination-of-,Hladky2022The-glymphatic-,Abbott2018The-role-of-bra}: little is known about the mechanisms and forces driving fluid movement. Various drivers of the glymphatic flow have been proposed in the literature, such as arterial pulsation,\cite{Iliff2013Cerebral-arteri,Mestre2018Flow-of-cerebro,Bedussi2018Paravascular-sp} functional hyperemia,\cite{Kedarasetti2020Functional-hype,Kedarasetti2022Arterial-vasodi} respiration,\cite{Kiviniemi2016Ultra-fast-magn} and body posture.\cite{Lee2015The-Effect-of-B}

The transport phenomena at hand are characterized by flow through parenchyma that has both convective and diffusive features. Furthermore, this flow can cross a variety of permeable membranes, such as the glia limitans and the pia mater (cf.\ Ref.~\refcite{Hladky2022The-glymphatic-}). These membranes have different cellular structures and can play different roles, including that of regulating  osmotic pressures. To simultaneously account for both diffusive and convective flows, we had adopted\cite{Costanzo2016Finite-Element-0,Kedarasetti2022Arterial-vasodi} a poroelasticity theory based on mixture theory.\cite{Bowen1976Theory-of-Mixtu0,Bowen1980Incompressible-0} In this paper, we focus on interface conditions at permeable interfaces with an eye toward the possibility of explicitly reflecting interfacial properties in our model.

There is a vast literature on the conditions at the interface between a porous medium and a fluid (see, e.g., in Refs.~\refcite{Hou1989Boundary-condit,Shim2022A-Hybrid-Biphas}, and~\refcite{dellIsola2009Boundary-Condit-0}). This literature concerns both modeling aspects (e.g., Refs.~\refcite{Hou1989Boundary-condit,Shim2022A-Hybrid-Biphas}, and~\refcite{dellIsola2009Boundary-Condit-0}) as well as the numerical implementation of such conditions (e.g., Refs.~\refcite{Lee2019A-Mixed-Finite-,Causemann2022Human-Intracran}, and~\refcite{Ruiz-Baier2022The-Biot-Stokes}). It is well-known that the jump conditions of the balance laws do not imply or demand that the fluid velocity field or the pore pressure be continuous across the interface.\cite{Bowen1976Theory-of-Mixtu0} It is also well-known that a \ac{BVP} without sufficient constraints on said discontinuities is generally ill-posed (e.g., see discussion in Ref.~\refcite{Hou1989Boundary-condit}) and such that the ill-posedness has different manifestations depending on the fluid's constitutive behavior on the two sides of the interface. Therefore, complementing the jump conditions of the balance laws with some additional interface conditions is often essential for the formulation of a well-posed \ac{BVP}. Provided that several approaches have been proposed in the literature, by and large, the most commonly adopted interface conditions (cf.\ Ref.~\refcite{Shim2022A-Hybrid-Biphas}) are kinematic in nature as they prescribe continuity for the pore pressure field and the tangential component of the filtration velocity. For mixtures of incompressible materials, the continuity of the tangential component of the filtration velocity, along with the requirement that its normal component be continuous as a consequence of the balance of mass, yields a continuity condition on the filtration velocity as a whole, leaving the fluid velocity field discontinuous. Such continuity requirement is fairly straightforward to implement in practice whenever the field equations are written in terms of filtration velocity, as opposed to the fluid velocity. However, this type of formulation may not always be appropriate for the modeling of a particular problem. This said, and much more importantly, prescribing the continuity of certain fields across the interface is tantamount to prescribing an \emph{implicit} constitutive response to the interface. This idea is brought out by the analysis in Ref.~\refcite{dellIsola2009Boundary-Condit-0} where the authors use Hamilton's principle\cite{Bedford2021Hamiltons-Princ} to derive the problem's governing equations and interface conditions. In this approach, one must define an action potential and, when dissipation is present,  a Rayleigh pseudo-potential. In turn, this requires an explicit model of all energy storage and dissipation mechanisms in the system. As such, the interface constitutive behavior is explicitly modeled and the associated interface conditions are derived by a choice of test functions that appropriately respects the kinematic assumptions underlying the model. In doing so, dell'Isola \etal{}\cite{dellIsola2009Boundary-Condit-0} obtained interface conditions that had not previously appeared in the literature but that transparently translated the constitutive behavior of the interface and its relation to the constitutive behavior of the surrounding materials, regardless of whether they are both poroelastic, or one being poroelastic and the other being pure fluid.

We found the work presented in Ref.~\refcite{dellIsola2009Boundary-Condit-0} to provide the most appropriate setting for our interest. This said, we could not find a numerical implementation of such framework, especially for the case in which the constituents are incompressible in their pure form. Furthermore, to obtain the interface conditions pertaining to the motion of the fluid relative to the solid, the authors of Ref.~\refcite{dellIsola2009Boundary-Condit-0} designed two \emph{ad hoc} test functions for each of two different components of the interface conditions and distinguishing the cases with and without mass transport across the interface. While we find said construction rather clever, we also find it somewhat difficult to implement it practically within common \ac{FEM} platforms. With this in mind, the objectives of this work are (\emph{i}) to  propose an adaptation of the variational approach in Ref.~\refcite{dellIsola2009Boundary-Condit-0} to the case in which each of the mixture constituents is incompressible in its pure form, and (\emph{ii}) to propose a practical implementation of the approach using a \ac{FEM} that allows the use of commonly available discontinuous representations of fluid velocity fields.

The paper is organized as follows. First, in Sec.~\ref{sec: Basics}, we present the essential notation and basic kinematics needed to describe our model. In Sec.~\ref{Sec: Balance laws}, we then discuss the relevant balance laws following the traditional approach in Ref.~\refcite{Bowen1976Theory-of-Mixtu0} and we derive a corresponding energy estimate. In Sec.~\ref{sec: variational formulations}, we  derive the field equations using Hamilton's principle and offer a comparison with the previously derived governing equations. In this section we also propose a practical \ac{FEM} implementation of our equations. In Sec.~\ref{Sec: Examples}, we discuss some examples. In Sec.~\ref{Sec: Summary}, we conclude the paper with a summary and discussion.

\section{Basic definitions and kinematics}
\label{sec: Basics}
Here we describe the motion of our system. Our treatment is rooted in Refs.~\refcite{Bowen1976Theory-of-Mixtu0} and~\refcite{Bowen1980Incompressible-0} and we adopt the kinematic and analytical framework in Ref.~\refcite{dellIsola2009Boundary-Condit-0}, which we adapt to the case of a biphasic mixture of incompressible constituents. For completeness, basic definitions have been placed in Appendix~\ref{Appendix: Notation}.

\subsection{The physical system and its kinematics}
\label{subsec: kinematics}
We consider a mixture with a solid and a fluid phase. The mixture's current configuration is $B(t)\subset\mathcal{E}^{3}$, while $B_{\s} \subset \mathcal{E}^{3}$ and $B_{\f} \subset \mathcal{E}^{3}$ are the reference configurations of the solid and fluid phases, respectively. The symbols $\bv{x}$, $\bv{X}_{\s}$, and $\bv{X}_{\f}$ denote points in $B(t)$, $B_{\s}$, and $B_{\f}$, respectively. Referring to the notation in Eq.~\eqref{eq: circd notation}, $\circd{e}$, $\circd{s}$, and $\circd{f}$ indicate $B(t)$, $B_{\s}$, and $B_{\f}$, respectively.

$B(t)$ is the common image of the solid and fluid motions, respectively
\begin{equation}
\label{eq: Motions defs}
\bv{\chi}_{\s}: B_{\s} \times \mathbb{R}_{0}^{+}\to B(t)
\quad\text{and}\quad
\bv{\chi}_{\f}: B_{\f} \times \mathbb{R}_{0}^{+}\to B(t),
\end{equation}
which are expected to be diffeomorphisms except, possibly, across a permeable interface $\mathcal{S}(t) \subset B(t)$ separating two (or more) porous domains and/or a porous and a purely fluid domain. 

If $\mathcal{S}(t)$ separates two distinct porous domains, $\mathcal{S}(t)$ is assumed to be a coherent interface convected by a globally continuous solid motion. If $\mathcal{S}(t)$ separates a porous domain from a purely fluid domain, $\bv{\chi}_{\s}$ is understood to be continuously extended in the fluid domain. In all cases, $\mathcal{S}(t)$ is the image under $\bv{\chi}_{\s}$ of a smooth not self-intersecting \emph{fixed} submanifold $\mathcal{S}_{\s}\subset B_{\s}$ such that $\mathcal{S}_{\s}$ either cuts through $B_{\s}$ intersecting the boundary of $B_{\s}$ or is closed and completely contained in $B_{\s}$. $\mathcal{S}_{\f}(t)$ is the inverse image of $\mathcal{S}(t)$ under $\bv{\chi}_{\f}$.

Next, away from $\mathcal{S}_{\s}$ and $\mathcal{S}_{\f}(t)$, the gradients (the meaning of $\nabla$ is context dependent, cf.\ Appendix~\ref{Appendix: Notation}) of $\bv{\chi}_{\s}$ and $\bv{\chi}_{\f}$ are
\begin{equation}
\label{eq: Fs and Ff defs}
    \ts{F}_{\s}(\bv{X}_{\s},t) \coloneqq \Grad{\bv{\chi}_{\s}(\bv{X}_{\s},t)}
\quad\text{and}\quad    
    \ts{F}_{\f}(\bv{X}_{\f},t) \coloneqq \Grad{\bv{\chi}_{\f}(\bv{X}_{\f},t)},
\end{equation}
with determinants $J_{\s}$ and $J_{\f}$, respectively, which are taken to be strictly positive. Similarly, the solid and fluid material velocity fields are, respectively, 
\begin{equation}
\label{eq: vs and vf defs}
    \bv{v}_{\s}(\bv{X}_{\s},t) \coloneqq \partial_{t}\bv{\chi}_{\s}(\bv{X}_{\s},t)
    \quad\text{and}\quad
    \bv{v}_{\f}(\bv{X}_{\f},t) \coloneqq \partial_{t}\bv{\chi}_{\f}(\bv{X}_{\f},t).
\end{equation}
In Ref.~\refcite{dellIsola2009Boundary-Condit-0}, it is posited that the motions $\bv{\chi}_{\s}$ and $\bv{\chi}_{\f}$ are diffeomorphisms away from the interface, but can suffer jump discontinuities in their space--time gradients so as to have the Hadamard properties.\cite{dellIsola2009Boundary-Condit-0} A general discussion of the subject can be found in Refs.~\refcite{dellIsola2009Boundary-Condit-0,Truesdell1965The-Non-Linear-0}, and~\refcite{Bowen1976Theory-of-Mixtu0}. Here, we start with following the same kinematic assumption for $\bv{\chi}_{\s}$ as in Ref.~\refcite{dellIsola2009Boundary-Condit-0}, so that $\bv{\chi}_{\s}$ is continuous across the interface. We will also assume that, away from the interface, $\bv{\chi}_{\f}$ is a diffeomorphism. However, we will allow the fluid material velocity field to be discontinuous across the interface while only restricted to the requirements imposed by the jump condition of the mass balance. The kinematic constraint originating with the latter will be imposed via a Lagrange multiplier in Sec.~\ref{subsec: Hamilton principle}.


We denote by  $\bv{m}_{\s}$ the unit normal orienting $\mathcal{S}_{\s}$. The corresponding normal orienting $\mathcal{S}(t)$ is $\bv{m}$, which is given by
\begin{equation}
\label{eq: normal to St}
\bv{m} \coloneqq \biggl(\frac{J_{\s}\inversetranspose{\ts{F}}_{\s}\bv{m}_{\s}}{\|\inversetranspose{J_{\s}\ts{F}}_{\s} \bv{m}_{\s}\|}\biggr)^{\circd{e}},
\end{equation}
and the normal velocity of $\mathcal{S}(t)$ is the unique scalar field
\begin{equation}
\label{eq: celerity of St}
c_{\mathcal{S}(t)}: \mathcal{S}(t)\to\mathbb{R},
\quad
c_{\mathcal{S}(t)} \coloneqq \bv{v}_{\s}^{\circd{e}}\cdot\bv{m}.
\end{equation}

In addition to the maps $\bv{\chi}_{\s}$ and $\bv{\chi}_{\f}$, following Ref.~\refcite{dellIsola2009Boundary-Condit-0}, we introduce the following map
\begin{equation}
\label{eq: Relative motion}
\bv{\chi}_{\frs}: B_{\s}\to B_{\f},\quad
\bv{\chi}_{\frs} \coloneqq \bv{\chi}_{\f}^{-1}\circ\bv{\chi}_{\s}.
\end{equation}
The function $\bv{\chi}_{\frs}(\bv{X}_{\s},t)$, which is well-defined away from the interface, identifies the fluid particle $\bv{X}_{\f}$ that, at time $t$, occupies the same position in the mixture as the solid material particle $\bv{X}_{\s}$. This map encodes the relative motion of the fluid with respect to the solid. Letting 
\begin{equation}
\label{eq: Ffrs and vfrs defs}
\ts{F}_{\frs} \coloneqq \Grad\bv{\chi}_{\frs},\quad
J_{\frs} = \det\ts{F}_{\frs},
\quad\text{and}\quad
\bv{v}_{\frs} \coloneqq \partial_{t}\bv{\chi}_{\frs},
\end{equation}
it can be shown that
\begin{equation}
\label{eq: vf rel vs}
\ts{F}_{\f}^{\circd{s}} = \ts{F}_{\s}\ts{F}_{\frs}^{-1},\quad
J_{\f}^{\circd{s}} = J_{\s}/J_{\frs},
\quad\text{and}\quad
\bv{v}_{\f}^{\circd{\s}} = \bv{v}_{\s} - \ts{F}_{\s}\ts{F}_{\frs}^{-1} \bv{v}_{\frs}.
\end{equation}

If the solid and fluid are submanifolds of an all-containing Euclidean manifold, the displacement of the solid and fluid phases, denoted by $\bv{u}_{\s}$ and $\bv{u}_{\f}$, respectively, are defined as
\begin{align}
\label{eq: us and uf defs}
    \bv{u}_{\s}(\bv{X}_{\s},t) \coloneqq \bv{\chi}_{\s}(\bv{X}_{\s},t) - \bv{X}_{\s}
    \quad\text{and}\quad
    \bv{u}_{\f}(\bv{X}_{\f},t) \coloneqq \bv{\chi}_{\f}(\bv{X}_{\f},t) - \bv{X}_{\f}.
\end{align}
We note that
\begin{equation}
\label{eq: uf from relative motion}
\bv{u}_{\f}^{\circd{s}} = \bv{\chi}_{\s} - \bv{\chi}_{\frs}.
\end{equation}
The material accelerations of the solid and fluid phases, denoted by $\bv{a}_{\s}$ and $\bv{a}_{\f}$, respectively, are
\begin{align}
\label{eq: as and af defs}
\bv{a}_{\s}(\bv{X}_{\s},t) \coloneqq \partial_{t}\bv{v}_{\s}(\bv{X}_{\s},t)
\quad\text{and}\quad
    \bv{a}_{\f}(\bv{X}_{\f},t) \coloneqq \partial_{t}\bv{v}_{\f}(\bv{X}_{\f},t).
\end{align}

\section{Balance of mass and momentum}
\label{Sec: Balance laws}
Here we summarize the presentation in Refs. \refcite{Bowen1976Theory-of-Mixtu0} and \refcite{Bowen1980Incompressible-0}. 

\subsection{Balance of mass}
In their pure form, the fluid and the solid phases are assumed to be incompressible with mass densities given by the constants $\rho_{\f}^{*}$ and $\rho_{\s}^{*}$, respectively.

Let
\begin{equation}
\label{eq: volume fractions}
\phi_{\s}: B(t)\to[0,1]
\quad\text{and}\quad
\phi_{\f}: B(t)\to[0,1]
\end{equation}
denote the volume fractions of the solid and fluid, respectively, subject to the \emph{saturation condition}
\begin{equation}
\label{eq: saturation condition}
\phi_{\s}+\phi_{\f} = 1.
\end{equation}
Then, the mass densities of the solid and the fluid over $B(t)$ (i.e., in the mixture) are, respectively,
\begin{equation}
\label{eq: mass density distributions}
\rho_{\s} = \rho_{\s}^{*} \phi_{\s}
\quad\text{and}\quad
\rho_{\f} = \rho_{\f}^{*} \phi_{\f}.
\end{equation}
In the absence of chemical reactions, the mass balance in $B(t)$ requires that
\begin{equation}
\label{eq: balance of mass}
\partial_{t}\rho_{a} + \Div{(\rho_{a} \bv{v}_{a}^{\circd{e}})} = 0
\quad\Rightarrow\quad
\partial_{t}\phi_{a} + \Div{(\phi_{a} \bv{v}_{a}^{\circd{e}})} = 0,
\end{equation}
for each species $a = \s,\f$.
In view of Eq.~\eqref{eq: saturation condition}, the motion of the mixture is governed by the following (single) constraint:
\begin{equation}
\label{eq: div vvavg constraint}
\Div{(\phi_{\s}\bv{v}_{\s}^{\circd{e}}+\phi_{\f}\bv{v}_{\f}^{\circd{e}})} = 0.
\end{equation}
We now define the following fields over $B(t)$:
\begin{equation}
\label{eq: vvavg and vflt defs}
\bv{v}_{\va} \coloneqq \phi_{\s}\bv{v}_{\s}^{\circd{e}}+\phi_{\f}\bv{v}_{\f}^{\circd{e}}
\quad\text{and}\quad
\bv{v}_{\flt} \coloneqq \phi_{\f}(\bv{v}_{\f}^{\circd{e}} - \bv{v}_{\s}^{\circd{e}}).
\end{equation}
The field $\bv{v}_{\va}$ is the volume average velocity of the mixture, whereas $\bv{v}_{\flt}$ is the filtration velocity. Using Eqs.~\eqref{eq: vvavg and vflt defs}, Eq.~\eqref{eq: div vvavg constraint} can also be written as
\begin{equation}
\label{eq: div vvavg constraint expanded}
\Div{\bv{v}_{\va}} = 0
\quad\text{or}\quad
\Div{\bv{v}_{\s}^{\circd{e}} + \Div\bv{v}_{\flt}} = 0.
\end{equation}

As $\bv{v}_{\f}$ may not be continuous across $\mathcal{S}(t)$, we also need to state the jump condition of the balance of mass. We assume that the interface does not support the production or removal of mass. Hence, using traditional arguments (cf.\ Refs.~\refcite{Bowen1976Theory-of-Mixtu0} and~\refcite{GurtinFried_2010_The-Mechanics_0}), at every point on $\mathcal{S}(t)$ we must have
\begin{equation}
\label{eq: mass balance jump condition}
\llbracket \bv{v}_{\flt} \rrbracket \cdot \bv{m} = 0
\quad\Rightarrow\quad
\llbracket \bv{v}_{\va} \rrbracket \cdot \bv{m} = 0,
\end{equation}
where the second of Eqs.~\eqref{eq: mass balance jump condition} is a consequence of the continuity of $\bv{v}_{\s}$.

The second of Eqs.~\eqref{eq: balance of mass} and the assumed regularity of the motions away from the interface imply the existence of referential forms of the volume fraction distributions for the solid and fluid phases, which we denote by $\phi_{R_{\s}}$ and $\phi_{R_{\f}}$, respectively:
\begin{equation}
\label{eq: phiRs and phiRf defs}
\phi_{R_{\s}} = J_{\s} \phi_{\s}^{\circd{s}}
\quad\text{and}\quad
\phi_{R_{\f}} = J_{\f} \phi_{\f}^{\circd{f}}.
\end{equation}

\begin{remark}[Solid immersed in fluid]
In the present kinematic framework, it is admissible for $\phi_{R_{\s}}$ to vanish over parts of $B_{\s}$. In such cases, the support of $\phi_{R_{\s}}$ will be assumed to be a closed manifold that supports the physics of a solid body immersed in a fluid.    
\end{remark}

\subsection{Balance of momentum}
The mixture is assumed to be isothermal. Using Refs.~\refcite{Bowen1976Theory-of-Mixtu0} and~\refcite{Bowen1980Incompressible-0} as guides, the balance of momentum for phase $a = \s,\f$ in a mixture requires that in $B(t)$ away from $\mathcal{S}(t)$ we have
\begin{equation}
\label{eq: momentum}
\rho_{a} \bv{a}_{a}^{\circd{e}} - \Div{\ts{T}_{a}} -\bv{b}_{a} - \bv{p}_{a} = \bv{0},
\end{equation}
where $\ts{T}_{a}$ is the Cauchy stress of species $a$, $\bv{b}_{a}$ is an external force density (per unit volume of $B(t)$), and $\bv{p}_{a}$ is the force density (per unit volume of $B(t)$) acting on $a$ because of its interaction with the rest of the mixture. The interaction forces must satisfy the constraint
\begin{equation}
\label{eq: interaction force constraint}
\sum_{a} \bv{p}_{a} = \bv{0}.
\end{equation}
For the momentum balance jump condition over $\mathcal{S}(t)$ we posit that
\begin{equation}
\label{eq: jump of momentum}
\llbracket
\rho_{a}(\bv{v}_{a}^{\circd{e}} - \bv{v}_{\s}^{\circd{e}}) \otimes (\bv{v}_{a}^{\circd{e}} - \bv{v}_{\s}^{\circd{e}}) - \ts{T}_{a}
\rrbracket \, \bv{m}
+ \bv{\xi}_{a}
= \bv{0},
\end{equation}
where $\bv{\xi}_{a}$ is a force density per unit area of $\mathcal{S}(t)$ analogous to $\bv{p}_{a}$, and for which we posit a similar constraint
\begin{equation}
\label{eq: interface interaction force constraint}
\sum_{a} \bv{\xi}_{a} = \bv{0}.
\end{equation}
Equation~\eqref{eq: jump of momentum} differs from what can be found in Ref.~\refcite{Bowen1976Theory-of-Mixtu0} (cf.\ Eq.~(2.10.22) in Ref.~\refcite{Bowen1976Theory-of-Mixtu0}) because of $\bv{\xi}_{a}$.  In Remark~\ref{remark: on the force xa} of Section~\ref{subsec: Hamilton principle}, we argue that $\bv{\xi}_{a}$ plays a role in modeling dissipation at the interface and in the satisfaction of the first of Eqs.~\eqref{eq: div vvavg constraint expanded}.

Equations~\eqref{eq: jump of momentum} and~\eqref{eq: interface interaction force constraint} yield the following conditions for the solid, fluid, and mixture, respectively:
\begin{gather}
\label{eq: Jump condition for solid and fluids individually}
- \llbracket \ts{T}_{\s}\rrbracket \bv{m} + \bv{\xi}_{\s} = \bv{0},
\quad
d \llbracket\bv{v}_{\f}^{\circd{e}}\rrbracket - \llbracket \ts{T}_{\f} \rrbracket \bv{m} + \bv{\xi}_{\f} = \bv{0},
\shortintertext{and}
\label{eq: Jump condition for mixture}
d \llbracket\bv{v}_{\f}^{\circd{e}}\rrbracket - \llbracket \ts{T}_{\f} + \ts{T}_{\s} \rrbracket \bv{m} = \bv{0},
\end{gather}
where $d$ is
\begin{equation}
\label{eq: d def}
d \coloneqq \rho_{\f} (\bv{v}_{\f}^{\circd{e}} - \bv{v}_{\s}^{\circd{e}}) \cdot \bv{m}
\quad\Rightarrow\quad
\llbracket d \rrbracket = 0.
\end{equation}

\begin{remark}
    The jump condition in Eq.~\eqref{eq: Jump condition for mixture} is generally agreed upon in the literature (cf.\ Eq.~(11\emph{b}) in Ref.~\refcite{Hou1989Boundary-condit}, Eq.~(2.73) in Ref.~\refcite{Shim2022A-Hybrid-Biphas}, and Eq.~(30) in Ref.~\refcite{dellIsola2009Boundary-Condit-0}). However, the same cannot be said for Eqs.~\eqref{eq: Jump condition for solid and fluids individually}. The tangential projection of the second of Eqs.~\eqref{eq: Jump condition for solid and fluids individually} is similar to Eq.~(32) in Ref.~\refcite{dellIsola2009Boundary-Condit-0}.
\end{remark}

\subsection{Constitutive response functions}
\label{sec: CRF}
We assume that the pure solid phase is incompressible and hyperelastic. The strain energy of the pure solid phase is assumed to be a non-negative scalar function of the form
\begin{equation}
\label{eq: pure solid strain energy}
\hat{\Psi}_{\s} = \hat{\Psi}_{\s}(\overline{\ts{C}}_{\s},\bv{X}_{\s}),
\end{equation}
where
\begin{equation}
\label{eq: right CG strain tensor}
\ts{C}_{\s} \coloneqq \transpose{\ts{F}}_{\s} \ts{F}_{\s}
\quad\text{and}\quad
\overline{\ts{C}}_{\s} = J_{\s}^{-2/3} \ts{C}_{\s},
\end{equation}
so that $\overline{\ts{C}}_{\s}$ is the isochoric part of $\ts{C}_{\s}$. The strain energy of the solid phase in the mixture is simply assumed to that of the pure solid scaled by its volume fraction:
\begin{equation}
\label{eq: solid strain energy}
\Psi_{\s} \coloneqq \phi_{R_{\s}} \hat{\Psi}_{\s}.
\end{equation}
It is convenient to introduce a symbol to refer to the solid's strain energy per unit volume of the current configuration:
\begin{equation}
\label{eq: eulerian strain energy}
\psi_{\s} \coloneqq \bigl(J_{\s}^{-1}\Psi_{\s}\bigr)^{\circd{e}}.
\end{equation}
The contributions to the first Piola-Kirchhoff and Cauchy stress tensors originating from the elastic strain energy are, respectively,
\begin{equation}
\label{eq: 1PK and Cauchy stress}
\ts{P}_{\s}^{e} = 2 \phi_{R_{\s}} \ts{F}_{\s} \frac{\partial \hat{\Psi}_{\s}}{\partial \ts{C}_{\s}}
\quad\text{and}\quad
\ts{T}_{\s}^{e} = \biggl(2 \phi_{R_{\s}} J_{\s}^{-1} \ts{F}_{\s} \frac{\partial \hat{\Psi}_{\s}}{\partial \ts{C}_{\s}} \transpose{\ts{F}}_{\s}\biggr)^{\circd{e}}.
\end{equation}
In addition to a hyperelastic contribution, we allow for a dissipative stress contribution due to the interaction of the solid with the fluid phase, analogous in form to what is found in Eq.~(2.1.3) in Ref.~\refcite{Bowen1976Theory-of-Mixtu0}. With this in mind, for $a = \s,\f$, we define
\begin{equation}
\label{eq: definition of Da}
\ts{D}_{a} \coloneqq \tfrac{1}{2} (\ts{L}_{a} + \transpose{\ts{L}}_{a}),\quad\text{with}\quad
\ts{L}_{a} \coloneqq \Grad (\bv{v}_{a}^{\circd{e}}).
\end{equation}
Then, the viscous contribution to the solid's stress in the mixture will be
\begin{equation}
\label{eq: solid viscous Cauchy stress}
\ts{T}_{\s}^{v} =  2 \mu_{B} (\ts{D}_{\s} - \ts{D}_{\f}).
\end{equation}
This type of contribution is common in poroelasticity, although it is usually discussed in relation to the fluid's behavior where it is referred to as a Brinkman term.\cite{Brinkman1949A-Calculation-o} The dynamic viscosity $\mu_{B}$ is generally a function of $\phi_{\s}$ with support identical to that of $\phi_{\s}$.

As far as the pure fluid phase is concerned, having already assumed that it is incompressible, we further assume that it is linear viscous, i.e., with viscous stress equal to $2 \hat{\mu}_{\f} \ts{D}_{\f}$, where $\hat{\mu}_{\f} \in \mathbb{R}^{+}_{0}$ is a constant dynamic shear viscosity. As for the stress response of the fluid in the mixture, we will assume that it consists of two terms: one with form $2 \mu_{\f} \ts{D}_{\f}$, where $\mu_{\f}$ is a dynamic viscosity that depends on $\phi_{\f}$, and the other being the counterpart of the Brinkman term introduced earlier (again, cf.\ Eq.~(2.1.3) in Ref.~\refcite{Bowen1976Theory-of-Mixtu0}):
\begin{equation}
\label{eq: viscous Cauchy stress of fluid in mixture}
\ts{T}_{\f}^{v} = 2 \mu_{\f} \ts{D}_{\f} + 2 \mu_{B} (\ts{D}_{\f} - \ts{D}_{\s}).
\end{equation}
The dynamic viscosity $\mu_{\f}$ will be subject to intuitive constraints such as $\mu_{\f}\to 0$ for $\phi_{\f}\to 0$ and $\mu_{\f}\to \hat{\mu}_{\f}$ for $\phi_{\f}\to 1$.

A pressure field (cf., e.g., Ref.~\refcite{Bowen1980Incompressible-0}) denoted by $p$ and called the \emph{pore pressure} is introduced over $B(t)$ to enforce Eq.~\eqref{eq: div vvavg constraint}. Then, the full forms of $\ts{T}_{\s}$ and $\ts{T}_{\f}$ are, respectively,
\begin{equation}
\label{eq: Ts and Tf overall}
\ts{T}_{\s} = -\phi_{\s} p \, \ts{I} + \ts{T}^{e}_{\s} +  \ts{T}^{v}_{\s}
\quad\text{and}\quad
\ts{T}_{\f} = -\phi_{\f} p \, \ts{I} + \ts{T}^{v}_{\f},
\end{equation}
where $\ts{I}$ is the identity tensor.

The constitutive equations for a mixture must also include expressions for $\bv{p}_{a}$ and $\bv{\xi}_{a}$ ($a = \s,\f$). In both cases, we will assume that these forces originate from a viscous resistance to the sliding of the fluid relative to the solid, i.e., they originate from the kinematic quantity $(\bv{v}_{\f} - \bv{v}_{\s})$.

For $\bv{p}_{\f}$ (by Eq.~\eqref{eq: interaction force constraint}, $\bv{p}_{\s} = -\bv{p}_{\f}$), we follow Ref.~\refcite{Bowen1980Incompressible-0} and write
\begin{equation}
\label{eq: pf and ff full form}
\bv{p}_{\f} = p \Grad{\phi_{\f}} + \bv{f}_{\f},
\quad\text{with}\quad
\bv{f}_{\f} = -\frac{\mu_{D} \phi_{\f}^{2}}{\kappa_{\s}} (\bv{v}_{\f}^{\circd{e}} - \bv{v}_{\s}^{\circd{e}}),
\end{equation}
where $\kappa_{\s}$ is the solid's permeability, while $\mu_{D}$ is a dynamic viscosity that, just like $\mu_{B}$, has support identical to that of $\phi_{\s}$. We defer the modeling of $\bv{\xi}_{a}$ to later in the paper.

\begin{remark}[Inherent vs.\ mixture constitutive behavior]
In Mixture Theory, one distinguishes the inherent behavior of a phase from how a phase behaves in the presence of another. This fact is reflected in Eqs.~\eqref{eq: solid viscous Cauchy stress},~\eqref{eq: viscous Cauchy stress of fluid in mixture} (second term on the right-hand side), and the second of Eqs.~\eqref{eq: pf and ff full form}, where we find well-known models for the fluid-solid \emph{interaction} proposed by Brinkman\cite{Brinkman1949A-Calculation-o} and Darcy\cite{Darcy1856Les-Fontaines-P}. When there is no interaction (see also  discussion in Section~5.1 of Ref.~\refcite{dellIsola2009Boundary-Condit-0}), the constituents' response reverts back to the one they have in their pure form. The consequences of this fact can be counter-intuitive. For example, in the traditional Darcy flow model one has an otherwise \emph{inviscid fluid} ($\hat{\mu}_{\f} = 0$ and $\mu_{B} = 0$), which takes on a viscous response  ($\mu_{D} > 0$) when flowing through a solid. The fact that the fluid model in the traditional Darcy model is inviscid is apparent in that the equations of motion do not provide a way to prescribe the boundary value of the tangential component of the fluid's velocity. This lack of control is also reflected in the classic choice of functional setting for the fluid's velocity, which is typically $H(\text{div})$ for the traditional Darcy problem (cf., e.g., Refs.~\refcite{Masud2002A-Stabilized-Mixed-0} and~\refcite{Hu2017A-Nonconforming}) rather than $H^{1}$ as one finds in the Stokes or Navier-Stokes problems.\cite{Wang2007New-Finite-Elem,Zhang2024BDM-Hrm-div-Wea,Ye2021A-Stabilizer-Fr,Masud2006A-Multiscale-Fi,Franca1992Stabilized-Finite-Element-1}
\end{remark}

\subsection{Additional consequences of momentum balance}
\label{Sec: TPE}
Here we derive an energy estimate following a process similar to that used in the \ac{TPE}.\cite{Gurtin-CMBook-1981-1,GurtinFried_2010_The-Mechanics_0}

We consider a time independent domain $\Omega \subset B(t)$ such that it is crossed by the singularity surface $\mathcal{S}(t)$. Letting $\mathcal{S}(t)_{\Omega} \coloneqq \Omega \cap \mathcal{S}(t)$, it is assumed that $\mathcal{S}(t)_{\Omega}$ is an open surface that separates $\Omega$ into two domains $\Omega^{+}(t)$ and $\Omega^{-}(t)$, such that the unit normal $\bv{m}$ is outward relative to $\Omega^{-}(t)$. Next, we take the inner product between Eq.~\eqref{eq: momentum} and $\bv{v}_{a}^{\circd{e}}$, and integrate over $\Omega$. Accounting for mass balance, and using integration by parts as well as the transport theorem, for $a = \s,\f$ we have
\begin{equation}
\label{eq: TPW inverse step 1}
\begin{multlined}[b]
\frac{\d{}}{\d{t}} \int_{\Omega}
k_{a}
+
\int_{\Omega}
(
\ts{T}_{a} \colondot \ts{L}_{a}
-
\bv{p}_{a} \cdot \bv{v}_{a}^{\circd{e}}
)
+
\int_{\mathcal{S}(t)}
\llbracket
\transpose{\ts{T}}_{a}\bv{v}_{a}
- 
k_{a}(\bv{v}_{a}^{\circd{e}} - \bv{v}_{\s}^{\circd{e}})
\rrbracket \cdot \bv{m}
\\
=
-\int_{\partial\Omega_{\text{ext}}}
(
k_{a} \bv{v}_{a}^{\circd{e}} \cdot \bv{n}
-
\ts{T}_{a}\bv{n} \cdot \bv{v}_{a}^{\circd{e}}
)
+
\int_{\Omega} \bv{b}_{a}\cdot\bv{v}_{a}^{\circd{e}},
\end{multlined}
\end{equation}
where $\bv{n}$ is the boundary outward unit normal and $k_{a}$ is the kinetic energy density of species $a$ per unit volume of the mixture, i.e.,
\begin{equation}
\label{eq: kinetic energy density def}
k_{a} \coloneqq \tfrac{1}{2} \rho_{a} \|\bv{v}_{a}^{\circd{e}}\|^{2}.
\end{equation}
To express our energy estimate in a compact manner, we first define the quantities
$\mathcal{W}_{\text{ext}}(\Omega)$, $\mathscr{D}$, and $\mathscr{I}$ as follows:
\begin{equation}
\label{eq: Externally supplied power}
\begin{aligned}[b]
\mathcal{W}_{\text{ext}}(\Omega) &\coloneqq
-\int_{\partial\Omega_{\text{ext}}}
\bigl[
(k_{\s} + \psi_{\s}) \bv{v}_{\s}^{\circd{e}}
+
k_{\f} \bv{v}_{\f}^{\circd{e}}
\bigr]\cdot \bv{n}
\\
&\qquad+
\int_{\partial\Omega_{\text{ext}}}
(
\ts{T}_{\s}\bv{n}\cdot\bv{v}_{\s}^{\circd{e}}
+
\ts{T}_{\f}\bv{n}\cdot\bv{v}_{\f}^{\circd{e}}
)
\\
&\qquad+
\int_{\Omega}
(
\bv{b}_{\s}\cdot\bv{v}_{\s}^{\circd{e}}
+
\bv{b}_{\f}\cdot\bv{v}_{\f}^{\circd{e}}
),
\end{aligned}
\end{equation}
\begin{equation}
\label{eq: Interior rate of dissipation}
\mathscr{D} \coloneqq 2 \mu_{\f} \|\ts{D}_{\f}\|^{2}
+
2 \mu_{B} \|\ts{D}_{\f}-\ts{D}_{\s}\|^{2}
+
\frac{\mu_{D} \phi_{\f}^{2}}{\kappa_{\s}} \|\bv{v}_{\f}^{\circd{e}} - \bv{v}_{\s}^{\circd{e}}\|^{2},
\end{equation}
and
\begin{equation}
\label{eq: power expended by the interface}
\mathscr{I} \coloneqq
\llbracket \ts{T}_{\f} \bv{m} \cdot (\bv{v}_{\f}^{\circd{e}} - \bv{v}_{\s}^{\circd{e}}) \rrbracket
-
\tfrac{1}{2} d \llbracket \|\bv{v}_{\f}^{\circd{e}} - \bv{v}_{\s}^{\circd{e}} \|^{2} \rrbracket.
\end{equation}
The quantities $\mathcal{W}_{\text{ext}}(\Omega)$, $\mathscr{D}$, and $\mathscr{I}$ denote, respectively, the external supply of mechanical power to $\Omega$, the volume density of the dissipation rate in $\Omega$, and the surface density of the power expenditure on the interface. Then, summing the contributions in Eq.~\eqref{eq: TPW inverse step 1} for the fluid and the solid phases, and using the constitutive response functions for the stress in both the solid and fluid phases, for the mixture, we have (see Proposition~\ref{claim: TPE} in the Appendix)
\begin{equation}
\label{eq: TPE Mixture}
\mathcal{W}_{\text{ext}}(\Omega)
=
\frac{\d{}}{\d{t}} \int_{\Omega} (k_{\s} + k_{\f} + \psi_{\s}) + \int_{\Omega} \mathscr{D} + \int_{\mathcal{S}(t)} \mathscr{I}.
\end{equation}
\begin{remark}[Theorem of power expended]
    Equation~\eqref{eq: TPE Mixture} is the version of the theorem of power expended (cf.\ Ref.~\refcite{Gurtin-CMBook-1981-1}) for the present context. It presents a consequence of the momentum balance laws as opposed to, say, the balance of energy. In a way, the result in Eq.~\eqref{eq: TPE Mixture} is intuitive: the right-hand side tells us what happens to the  power input to $\Omega$, which is the left-hand side. It also allows to consider the power expended at the interface in a transparent manner, as argued in the following remarks.
\end{remark}

\begin{remark}[Inert vs.\ purely dissipative interfaces]
\label{remark: Inert vs dissipative interface}
     We call \emph{inert} an interface which neither reversibly stores nor does it dissipate power. In this case, appealing to the arbitrariness of $\Omega$, the term $\mathscr{I}$ in Eq.~\eqref{eq: TPE Mixture} must vanish:
\begin{equation}
\label{eq: jump condition of inert interfaces}
\llbracket \ts{T}_{\f} \bv{m} \cdot (\bv{v}_{\f}^{\circd{e}} - \bv{v}_{\s}^{\circd{e}}) \rrbracket 
-
\tfrac{1}{2} d \llbracket \|\bv{v}_{\f}^{\circd{e}} - \bv{v}_{\s}^{\circd{e}} \|^{2} \rrbracket
= 0.
\end{equation}
Provided that the constitutive assumptions in the present work are different from those in Ref.~\refcite{dellIsola2009Boundary-Condit-0}, \emph{mutatis mutandis}, Eq.~\eqref{eq: jump condition of inert interfaces} coincides with Eq.~(33) in Ref.~\refcite{dellIsola2009Boundary-Condit-0} when the interface is inert. By contrast, if the interface were purely dissipative, then $\mathscr{I}$ would 
measure the dissipation at the interface:
\begin{equation}
\label{eq: dissipative interface case}
\mathscr{I} = \mathscr{D}_{\mathcal{S}(t)},
\end{equation}
where $\mathscr{D}_{\mathcal{S}(t)}$ is the dissipation rate per unit area of the interface. This conclusion again agrees with Eq.~(33) in Ref.~\refcite{dellIsola2009Boundary-Condit-0}, which does include dissipation.
\end{remark}

\begin{remark}[Development of a constitutive theory for the interface forces]
\label{remark: interface constitutive equations}
    The result in Eq.~\eqref{eq: TPE Mixture} would be an important element in developing a constitutive theory of the interface. In this particular treatment, having assumed that the motion is $C^{1}_{\text{pw}}$ and treating $\bv{v}_{\s}$ as globally continuous, the development of a constitutive relation for $\bv{\xi}_{a}$ would start with positing that $\bv{\xi}_{a}$ can be a function of the kinematic variables that experience a discontinuity at the interface, e.g., something like $\bv{\xi}_{a} = \bv{\xi}_{a}(\llbracket \phi_{\f} \rrbracket, \llbracket\bv{v}_{\f}\rrbracket, \llbracket \ts{D}_{\s} \rrbracket, \llbracket \ts{D}_{\f} \rrbracket, \ldots)$. Another possible strategy is that presented in Ref.~\refcite{dellIsola2009Boundary-Condit-0}, where a constitutive assumption is made about the dissipation. For the purpose of obtaining a consistent formulation that can be more readily translated into a \ac{FEM}, we will follow the latter strategy.
\end{remark}

\begin{remark}[Concerning the continuity of the pore pressure across $\mathcal{S}(t)$]
    As articulated in Ref.~\refcite{Shim2022A-Hybrid-Biphas}, in some cases, it is quite reasonable to expect the pore pressure $p$ to be continuous across $\mathcal{S}(t)$. However, there are also cases in which a proper treatment of osmosis requires the possibility that the pore pressure be discontinuous across $\mathcal{S}(t)$. It is our position that if the constitutive behavior of the interface implies $\llbracket p \rrbracket = 0$, the enforcement of this condition should arise naturally from the overall model formulation. In the next Section, we present a formulation based on Ref.~\refcite{dellIsola2009Boundary-Condit-0} with this feature. To exemplify this point, suppose that we have an inert interface so that Eq.~\eqref{eq: jump condition of inert interfaces} holds. Suppose further that the pure fluid is inviscid as for the familiar Darcy flow, i.e., $\ts{T}_{\f} = - p \phi_{\f} \ts{I}$. Finally, suppose that the inertia of the fluid is negligible. In this case, Eq.~\eqref{eq: jump condition of inert interfaces} reduces to
\begin{equation*}
 \llbracket p \phi_{\f} \bv{m} \cdot (\bv{v}_{\f}^{\circd{e}} - \bv{v}_{\s}^{\circd{e}}) \rrbracket = 0
 \quad\Rightarrow\quad
 \llbracket p \rrbracket (\bv{v}_{\flt} \cdot \bv{m}) = 0,
\end{equation*}
where we have used the fact that the normal component of the filtration velocity is continuous as a consequence of the balance of mass. If $(\bv{v}_{\flt} \cdot \bv{m}) \neq 0$, we would then conclude that $\llbracket p \rrbracket = 0$. This result is a ``classic'' result (cf., e.g., Ref.~\refcite{Hou1989Boundary-condit}) that is recovered in the current treatment without enforcing it strongly by representing $p$ as a globally continuous field.
\end{remark}

\section{A Variational Formulation}
\label{sec: variational formulations}
Here we derive the field equations using Hamilton's principle (cf.\ Ref.~\refcite{Bedford2021Hamiltons-Princ}) following the approach in Ref.~\refcite{dellIsola2009Boundary-Condit-0}. The use of Hamilton's principle in poroelasticity is not new (cf.\ Ref.~\refcite{Bedford2021Hamiltons-Princ}). Our purpose in somewhat duplicating the derivation of the field equations is (\emph{i}) to provide a connection between the Newtonian approach (cf.\ Ref.~\refcite{Gurtin2000Configurational0}) to derive the field equations and a variational one; and (\emph{ii}) to find a simple path to their numerical implementation using \ac{FEM}. As mentioned in Remark~\ref{remark: interface constitutive equations}, we wish to translate assumptions on the interface constitutive response to energy estimates for our \ac{FEM} formulation in a direct way.

\subsection{Hamilton-Rayleigh Principle}
\label{subsec: Hamilton principle}
In the chosen kinematic framework, the system's configuration can be equivalently described by the pair $\{\bv{\chi}_{\s},\bv{\chi}_{\f}\}$ (cf.\ Eq.~\eqref{eq: Motions defs}) or the pair $\{\bv{\chi}_{\s},\bv{\chi}_{\frs}\}$ (cf.\ Eq.~\eqref{eq: Relative motion}). Recalling that the inverse image of the interface $\mathcal{S}(t)$ under $\bv{\chi}_{\s}(\bv{X}_{\s},t)$ is time independent, and observing that each element of $\{\bv{\chi}_{\s},\bv{\chi}_{\frs}\}$ has the same spatial domain $B_{\s}$ and is continuous over it, we choose 
\begin{equation}
\label{eq: hamiltons motion pair}
q(t) \coloneqq \{\bv{\chi}_{\s},\bv{\chi}_{\frs}\}
\end{equation}
as the (abstract) motion of our system for the purpose of applying Hamilton's principle.

Letting $\dot{q} \coloneqq \partial_{t} q$, and interpreting differentiation with respect to $q$ and $\dot{q}$ in a variational sense, for any sufficiently smooth $q$ in time and $\delta q$ with compact support in the time interval $[0,T]$, we require
\begin{equation}
\label{eq: Hamiltons principle paradigm}
\int_{0}^{T}
\frac{\partial (\mathscr{K} - \mathscr{U})}{\partial q} \delta q + \int_{0}^{T}\frac{\partial (\mathscr{C} + \mathscr{C}_{\flt} - \mathscr{R})}{\partial \dot{q}} \delta q = 0,
\end{equation}
where, assuming for simplicity (and without loss of generality) that the boundary conditions consist of homogeneous Dirichlet data for both components of the motion, $\mathscr{K}$, $\mathscr{U}$, $\mathscr{C}$, $\mathscr{C}_{\flt}$, and $\mathscr{R}$ are the following functionals:
\begin{align}
\label{eq: K definition}
\mathscr{K} &\coloneqq \int_{B_{\s}} (k_{\s} + k_{\f})^{\circd{s}} J_{\s},
\\
\label{eq: U definition}
\mathscr{U} &\coloneqq \int_{B_{\s}} (\Psi_{\s} - \bv{u}_{\s} \cdot \bv{b}_{\s}^{\circd{s}} J_{\s} - \bv{u}_{\f}^{\circd{s}} \cdot \bv{b}_{\f}^{\circd{s}} J_{\s}),
\\
\label{eq: Constraint definition}
\mathscr{C} &\coloneqq \int_{B_{\s}} p^{\circd{s}} \, J_{\s} [\Div{(\bv{v}_{\s}^{\circd{e}} + \bv{v}_{\flt})}]^{\circd{s}},
\\
\label{eq: Surface constraint definition}
\mathscr{C}_{\flt} &\coloneqq \int_{S_{\s}} \ip^{\circd{s}} \, \llbracket\bv{v}_{\flt}^{\circd{s}} \cdot J_{\s}\inversetranspose{\ts{F}}_{\s} \bv{m}_{\s}\rrbracket,
\\
\label{eq: R potential}
2 \mathscr{R} &\coloneqq 
\int_{B_{\s}} \mathscr{D}^{\circd{\s}} J_{\s}
+
\int_{\mathcal{S}_{\s}} (\mathscr{D}_{\mathcal{S}(t)})^{\circd{s}} J_{\s} \|\inversetranspose{\ts{F}}_{\s} \bv{m}_{\s}\|.
\end{align}
$\mathscr{K}$ and $\mathscr{U}$ are the system's kinetic and potential energy, respectively. 
$\mathscr{C}$ encodes constituent's (individual) incompressibility, whereas letting
\begin{equation}
\label{eq: ip def}
\ip: \mathcal{S}(t) \to \mathbb{R}
\end{equation}
be a Lagrange multiplier, $\mathscr{C}_{\flt}$ encodes the requirement that the normal component of the filtration velocity be continuous across $\mathcal{S}(t)$. $\mathscr{R}$ is the time rate of energy dissipation.  The term $\mathscr{D}_{\mathcal{S}(t)}$ represents the interface dissipation mentioned in Remark~\ref{remark: Inert vs dissipative interface}, for which we adopt the following (simple) quadratic form:
\begin{equation}
\label{eq: Surface Dissipation potential}
\mathscr{D}_{\mathcal{S}(t)} \coloneqq \tfrac{1}{2} 
\mu_{\mathcal{S}}
\|\llbracket
\bv{v}_{\flt}\rrbracket\|^{2},
\end{equation}
with $\mu_{\mathcal{S}}$ a positive constant with dimensions of a dynamic viscosity per unit length.
\begin{remark}[On the constitutive response of the interface]
The choice in Equation~\eqref{eq: Surface Dissipation potential} is intended as an extremely simple example of how the constitutive response of the interface is built into the formulation. In this case we are positing that the interface is purely dissipative. As always, different choices for $\mathscr{D}_{\mathcal{S}(t)}$ are possible, depending on the available kinematic framework. In fact, by properly modifying $\mathscr{U}$ in Eq.~\eqref{eq: U definition}, one can also describe interfaces capable of storing energy reversibly. Going back to Eq.~\eqref{eq: Surface Dissipation potential},
recalling that the continuity of the normal component of the filtration velocity is enforced through the jump condition of the balance of mass, the chosen expression of $\mathscr{D}_{\mathcal{S}(t)}$ can be interpreted as a penalization of $\llbracket \bv{v}_{\flt}\rrbracket$ in the tangential direction. In turn, this provides a strategy for the weak enforcement of the full continuity of the filtration velocity, a condition discussed, e.g., in Refs.~\refcite{Hou1989Boundary-condit} and~\refcite{Shim2022A-Hybrid-Biphas}. This said, we note that we cannot expect that the chosen form of energy dissipation will yield a direct control of the jump discontinuity of the pore pressure, simply because we did not include  terms designed to achieve such control. To some extent, our chosen model is also intended to illustrate this point. That is, if the jump discontinuity of the pore pressure is to be controlled, then a specific constitutive mechanism responsible for such control must be posited. The last example in Sec.~\ref{Sec: Examples} will further illustrate this point by showing that increasing the value of $\mu_{\mathcal{S}}$ does yield a decrease of the norm of the jump of the tangential component of the filtration velocity but it does not have a similar control on the norm of the jump of the pore pressure. 
\end{remark}

Applying standard methods, under the stated assumptions (cf.\ also Ref.~\refcite{dellIsola2009Boundary-Condit-0}), Eqs.~\eqref{eq: Hamiltons principle paradigm}--\eqref{eq: Surface Dissipation potential} yield the following result (cf.\ Proposition~\ref{claim: hamiltons principle} in the Appendix):
\begin{equation}
\label{eq: Hamiltons principle final global}
\begin{aligned}[b]
0 &= \int_{B_{\s}}
J_{\s} \bigl[
    \rho_{\s} \bv{a}_{\s}^{\circd{e}} + \rho_{\f} \bv{a}_{\f}^{\circd{e}} - \bv{b}_{\s} - \bv{b}_{\f} - \Div{(\ts{T}_{\s} + \ts{T}_{\f})}\bigr]^{\circd{s}} \cdot \delta\bv{\chi}_{\s}
\\
&\quad-
\int_{B_{\s}} J_{\s} (\rho_{\f} \bv{a}_{\f}^{\circd{e}} - \bv{b}_{\f} - \Div{\ts{T}_{\f}} - \bv{p}_{\f})^{\circd{s}} \cdot \ts{F}_{\s}\ts{F}_{\frs}^{-1} \delta\bv{\chi}_{\frs}
\\
&\quad+
\int_{\mathcal{S}_{\s}}
\llbracket
\rho_{\f}(\bv{v}_{\f}^{\circd{e}} - \bv{v}_{\s}^{\circd{e}}) \otimes (\bv{v}_{\f}^{\circd{e}} - \bv{v}_{\s}^{\circd{e}}) - \ts{T}_{\s} - \ts{T}_{\f} 
\rrbracket^{\circd{s}} J_{\s} \inversetranspose{\ts{F}}_{\s}  \bv{m}_{\s}
\cdot \delta\bv{\chi}_{\s}
\\
&\quad+
\int_{\mathcal{S}_{\s}}
\biggl\llbracket
\Bigl(
k_{\f} \ts{I}
-
\rho_{\f} \bv{v}_{\f}^{\circd{e}} \otimes (\bv{v}_{\f}^{\circd{e}} - \bv{v}_{\s}^{\circd{e}})
\Bigr)^{\circd{s}} J_{\s} \inversetranspose{\ts{F}}_{\s} \bv{m}_{\s} \cdot \ts{F}_{\s}\ts{F}_{\frs}^{-1} \delta \bv{\chi}_{\frs}
\biggr\rrbracket
\\
&\quad+
\int_{\mathcal{S}_{\s}}
\Bigl\llbracket
\bigl(
\phi_{\f} \ip \, \ts{I} + 
\ts{T}_{\f}
-
\tfrac{1}{2} \phi_{\f} \mu_{\mathcal{S}} \llbracket \bv{v}_{\flt} \rrbracket 
\otimes \bv{m}
\bigr)^{\circd{s}} J_{\s}\inversetranspose{\ts{F}}_{\s}\bv{m}_{\s}
\cdot \ts{F}_{\s}\ts{F}_{\frs}^{-1} \delta \bv{\chi}_{\frs}
\Bigr\rrbracket.
\end{aligned}
\end{equation}
By taking admissible variations of the fields $\delta\bv{\chi}_{\s}$ and $\delta\bv{\chi}_{\frs}$, we then conclude that
\begin{align}
\label{eq: balance of momentum mixture from variation}
\rho_{\s} \bv{a}_{\s}^{\circd{e}} + \rho_{\f} \bv{a}_{\f}^{\circd{e}} - \bv{b}_{\s} - \bv{b}_{\f} - \Div{(\ts{T}_{\s} + \ts{T}_{\f})} &= \bv{0}\quad\text{in $B(t)$},
\\
\label{eq: balance of momentum fluid from variation}
\rho_{\f} \bv{a}_{\f}^{\circd{e}} - \bv{b}_{\f} - \Div{\ts{T}_{\f}} - \bv{p}_{\f} &= \bv{0}\quad\text{in $B(t)$},
\\
\label{eq: balance of momentum jump mixture from variation}
\llbracket
\rho_{\f}(\bv{v}_{\f}^{\circd{e}} - \bv{v}_{\s}^{\circd{e}}) \otimes (\bv{v}_{\f}^{\circd{e}} - \bv{v}_{\s}^{\circd{e}}) - \ts{T}_{\s} - \ts{T}_{\f}
\rrbracket \bv{m}
&= \bv{0}\quad\text{on $\mathcal{S}(t)$},
\shortintertext{and}
\label{eq: balance of momentum jump fluid from variation}
\begin{multlined}[b]
\Bigl\llbracket
\inversetranspose{\ts{F}}_{\frs}
\transpose{\ts{F}}_{\s}
\Bigl(
k_{\f} \ts{I}
-
\rho_{\f} \bv{v}_{\f}^{\circd{e}} \otimes (\bv{v}_{\f}^{\circd{e}} - \bv{v}_{\s}^{\circd{e}})
+
\phi_{\f} \ip \, \ts{I} + 
\ts{T}_{\f}
\\
-
\tfrac{1}{2} \phi_{\f} \mu_{\mathcal{S}} \llbracket \bv{v}_{\flt} \rrbracket \otimes \bv{m}
\Bigr)^{\circd{s}} \,
\Big\rrbracket \bv{m}^{\circd{s}}
\end{multlined}
&= \bv{0} 
\quad
\text{on $\mathcal{S}_{\s}$}.
\end{align}

\begin{remark}[Concerning Eqs.~\eqref{eq: balance of momentum mixture from variation} and~\eqref{eq: balance of momentum fluid from variation}]
Referring to Eqs.~\eqref{eq: momentum} and~\eqref{eq: interaction force constraint}, one can see that Eq.~\eqref{eq: balance of momentum mixture from variation} expresses the balance of momentum for the mixture, the latter obtained by summing Eq.~\eqref{eq: momentum} for the two constituents and enforcing the constraint in Eq.~\eqref{eq: interaction force constraint}; whereas Eq.~\eqref{eq: balance of momentum fluid from variation} is directly the expression of the balance of momentum for the fluid phase. These equations correspond to Eqs.~(28) and~(29) in Ref.~\refcite{dellIsola2009Boundary-Condit-0}.
\end{remark}

\begin{remark}[Concerning Eq.~\eqref{eq: balance of momentum jump mixture from variation}]
Equation~\eqref{eq: balance of momentum jump mixture from variation} expresses the jump condition of the balance of momentum for the mixture as a whole. This equation coincides with Eq.~\eqref{eq: Jump condition for mixture}. \end{remark}

\begin{remark}[Concerning Eq.~\eqref{eq: balance of momentum jump fluid from variation}]
\label{remark: energy estimate from Hamilton}
Equation~\eqref{eq: balance of momentum jump fluid from variation} corresponds to Eq.~(31) in Ref.~\refcite{dellIsola2009Boundary-Condit-0} and it can be treated following the arguments presented therein to break up the equation into components. The object on the left-hand side of Eq.~\eqref{eq: balance of momentum jump fluid from variation} takes values in the cotangent space of points on $\mathcal{S}_{\f}(t)$. One component of this quantity can be obtained by taking a convenient tangent vector to $\mathcal{S}_{\f}(t)$ acting on it. To select such a vector, we recall that the continuity of $\bv{\chi}_{\s}$ implies:
\begin{equation}
\label{eq: regularity of chis}
\llbracket \ts{F}_{\s} \rrbracket \bv{\tau}_{\s_{1}} = \bv{0},
\quad
\llbracket \ts{F}_{\s} \rrbracket \bv{\tau}_{\s_{2}} = \bv{0},
\quad
\llbracket J_{\s} \inversetranspose{\ts{F}}_{\s} \bv{m}_{\s}\rrbracket = \bv{0},
\quad\text{and}\quad
\llbracket \bv{v}_{\s} \rrbracket = \bv{0}.
\end{equation}
Furthermore, under the kinematic framework in Ref.~\refcite{dellIsola2009Boundary-Condit-0}, the following conditions also hold:
\begin{equation}
\label{eq: continuity of the orienting set}
\llbracket\ts{F}_{\f}^{-1}\ts{F}_{\s}\bv{\tau}_{\s_{1}}\rrbracket = \bv{0},\quad
\llbracket\ts{F}_{\f}^{-1}\ts{F}_{\s}\bv{\tau}_{\s_{2}}\rrbracket
= \bv{0},\quad\text{and}\quad
\llbracket\inversetranspose{J_{\f}^{-1}J_{\s}(\ts{F}_{\f}^{-1}\ts{F}_{\s})}\bv{m}_{\s}\rrbracket = \bv{0}.
\end{equation}
Then, letting $\bv{\tau}$ be a tangent vector at $\hat{\bv{x}} \in \mathcal{S}(t)$, a suitable tangent vector to $\mathcal{S}_{\f}(t)$ would be $\ts{F}_{\frs}\ts{F}_{\s}^{-1}\bv{\tau}$ and its action on the left-hand side of Eq.~\eqref{eq: balance of momentum jump fluid from variation}, once remapped on $\mathcal{S}(t)$, yields
\begin{equation*}
\bv{\tau}\cdot
\Bigl(
d\llbracket
\bv{v}_{\f}^{\circd{e}}
\rrbracket
-
\llbracket
\ts{T}_{\f}
\rrbracket
\bv{m}
+
\tfrac{1}{2} \llbracket\phi_{\f}\rrbracket \mu_{\mathcal{S}} \llbracket \bv{v}_{\flt}\rrbracket
\Bigr)
=
0
\quad\text{on $\mathcal{S}(t)$}.
\end{equation*}
The above equation coincides with the projection of the second of Eqs.~\eqref{eq: Jump condition for solid and fluids individually} along $\bv{\tau}$. Letting $\mathbb{P}_{\tau} = \ts{I} - \bv{m} \otimes \bv{m}$ denote the projection tangent to $\mathcal{S}(t)$, the above equation can then be given in the more general form
\begin{equation}
\label{eq: tangent component of jump condition momentum of fluid}
\mathbb{P}_{\tau} 
\Bigl(
d\llbracket
\bv{v}_{\f}^{\circd{e}}
\rrbracket
-
\llbracket
\ts{T}_{\f}
\rrbracket
\bv{m}
+
\tfrac{1}{2} \llbracket\phi_{\f}\rrbracket\mu_{\mathcal{S}} \llbracket \bv{v}_{\flt}\rrbracket
\Bigr)
=
\bv{0}
\quad\text{on $\mathcal{S}(t)$}.
\end{equation}
Equation~\eqref{eq: tangent component of jump condition momentum of fluid} is useful in providing clarity as to the nature of the surface force $\bv{\xi}_{\f}$ introduced earlier. Clearly $\tfrac{1}{2}\llbracket\phi_{\f}\rrbracket \mu_{\mathcal{S}} \mathbb{P}_{\tau} \llbracket\bv{v}_{\flt}\rrbracket$ is only a component of $\bv{\xi}_{\f}$. As pointed out in Ref.~\refcite{dellIsola2009Boundary-Condit-0}, if $d \neq 0$, another (independent) component can be derived by considering the action of the vector $\ts{F}_{\frs}\ts{F}_{\s}^{-1}(\bv{v}_{\f}^{\circd{e}}-\bv{v}_{\s}^{\circd{e}})$ (cf.\ the last of Eqs.~\eqref{eq: vf rel vs}) onto the left-hand side of Eq.~\eqref{eq: balance of momentum jump fluid from variation}. The vector in question is continuous across $\mathcal{S}_{\s}$ and not tangent to $\mathcal{S}_{\f}$ when $d \neq 0$.
\end{remark}

\begin{remark}[On the force $\bv{\xi}_{a}$]
\label{remark: on the force xa}
    The force $\bv{\xi}_{a}$ was introduced in Eq.~\eqref{eq: jump of momentum}. We now present an argument for the determination of an explicit form for $\bv{\xi}_{a}$. We begin by observing that the result in Eq.~\eqref{eq: balance of momentum fluid from variation} is derived from the second line of Eq.~\eqref{eq: Hamiltons principle final global} by choosing an arbitrary variation $\delta\bv{\chi}_{\frs}$ and relying on the fact that $\ts{F}_{\s}$ and $\ts{F}_{\frs}$ are invertible. By contrast, Eq.~\eqref{eq: balance of momentum jump fluid from variation} does not allow us to proceed in the same manner, but requires careful scrutiny on the freedom with which one can take variations of the motion of the fluid relative to the solid. This said, choosing the motion of the fluid relative to the solid to be discontinuous  then the test function
\begin{equation*}
\ts{F}_{\s}\ts{F}_{\frs}^{-1} \delta\bv{\chi}_{\frs}
\end{equation*}
can be given arbitrary discontinuous values across the interface. Clearly, taken in isolation, this statement can imply some unphysical situations. However, the overall effect of this assumption is tempered by having imposed the constraint in Eq.~\eqref{eq: div vvavg constraint expanded} explicitly through the term in Eq.~\eqref{eq: Surface constraint definition}. It is our view that this way of proceeding is similar to the imposition of the incompressibility constraint in, say, the mixed weak formulation for the Stokes problem (cf.\ Ref.~\refcite{Brezzi1991Mixed-and-Hybrid-0}) where the velocity field and associated test functions are taken in spaces that are not necessarily divergence-free. This greater freedom in the choice of velocity function spaces is corrected by introducing a Lagrange multiplier that has the responsibility to enforce the divergence-free constraint. In our application, the Lagrange multiplier entrusted with ensuring conformance to the jump condition of the balance of mass is $\ip$.

With our choice of test functions, then the following result is obtained:
\begin{equation}
\label{eq: final abstract form of the jump condition of the fluid momentum}
\Bigl(
k_{\f}\bv{m}
- d \bv{v}_{\f}^{\circd{e}}
+
\phi_{\f} \ip \bv{m} + 
\ts{T}_{\f}\bv{m}
-
\tfrac{1}{2} \phi_{\f} \mu_{\mathcal{S}}
\llbracket \bv{v}_{\flt}\rrbracket
\Bigr)^{\pm}
= \bv{0} 
\quad
\text{on $\mathcal{S}(t)$}.
\end{equation}
We note that, under the assumption that $d = 0$, Eqs.~\eqref{eq: final abstract form of the jump condition of the fluid momentum} have similarities with Eqs.~(35) in Ref.~\refcite{dellIsola2009Boundary-Condit-0}. Next, referring to the second of Eqs.~\eqref{eq: Jump condition for solid and fluids individually}, using Eqs.~\eqref{eq: final abstract form of the jump condition of the fluid momentum}, we would conclude that
\begin{equation}
\label{eq: xif constitutive relation}
\bv{\xi}_{\f} = \tfrac{1}{2} \mu_{\mathcal{S}}
\llbracket \phi_{\f} \rrbracket
\llbracket \bv{v}_{\flt} \rrbracket
-
\llbracket k_{\f} + \phi_{\f} \ip \rrbracket \bv{m}.
\end{equation}
The above form of $\bv{\xi}_{\f}$ is such that the first term on the right-hand side of Eq.~\eqref{eq: xif constitutive relation} is tangent to the interface whereas the second term is normal to it.
\end{remark}

\subsection{A practical weak formulation}
Here we propose a weak formulation of the problem based on Eq.~\eqref{eq: Hamiltons principle final global} and the kinematic framework adopted in Remark~\ref{remark: on the force xa}. For the purpose of this paper, we will use the same simple constitutive assumptions made in the discussion of Hamilton's principle. Clearly, these assumptions can be changed to fit the modeling needs of a particular problem.

With the above in mind, the principal unknowns of our problem are selected as follows:
\begin{equation}
\label{eq: principal unknowns}
\bv{u}_{\s},\quad
\bv{v}_{\f}^{\circd{s}},\quad
p^{\circd{s}},\quad\text{and}\quad
\ip^{\circd{s}}.
\end{equation}
This choice of principal unknowns is meant to produce a formulation that is close to familiar \ac{FEM} formulations of poroelasticity. That is, we do not perceive that a \ac{FEM} in which the primary unknowns are $\bv{\chi}_{\s}$ and $\bv{\chi}_{\frs}$ would be adaptable to most applications. As for the particular form of the list,
Eq.~\eqref{eq: principal unknowns} is meant to indicate that we are proposing an \ac{ALE} formulation where the field variables are understood to be defined over $B_{\s}$, the latter domain coinciding with our computational domain. At the same time, to avoid unnecessary repetition, we will keep using the same symbols as in the formulation section of the paper. Also, to avoid proliferation of symbols, we denote the test functions of said fields by a tilde.

While the regularity of these fields has been discussed earlier in the paper, we recognize that the formal properties of these fields must be examined on a case by case basis depending on the specific set of constitutive equations that are selected for a corresponding specific problem. In fact, the constitutive equations also have repercussions to the type of boundary conditions that one might impose, as it is clear from the comparison between, say, the traditional Stokes and Darcy problems. With this in mind, and referring to Eq.~\eqref{eq: Hamiltons principle final global}, we operate the following substitution of test functions:
\begin{equation}
\label{eq: test function substitution}
\delta\bv{\chi}_{\s} \to \tilde{\bv{u}}_{\s}/\theta
\quad\text{and}\quad
-\ts{F}_{\s}\ts{F}_{\frs}^{-1} \delta\bv{\chi}_{\frs} \to \tilde{\bv{v}}_{\f},
\end{equation}
where $\theta$ is a unit time constant whose only purpose is to achieve dimensional consistency. We then propose the following weak formulation of the problem, which incorporates a weak enforcement of the constraints in the second of Eqs.~\eqref{eq: div vvavg constraint expanded} and the first of Eqs.~\eqref{eq: mass balance jump condition}. For simplicity of presentation, we assume that Dirichlet boundary conditions are prescribed for $\bv{u}_{\s}$ and $\bv{v}_{\f}$ on the external boundary of the mixture. Such weak formulation is then given by
\begin{equation}
\label{eq: Abstract FEM mixture}
\begin{aligned}[b]
0 &= \int_{B_{\s}}
J_{\s} \bigl(
    \rho_{\s} \bv{a}_{\s}^{\circd{e}} + \rho_{\f} \bv{a}_{\f}^{\circd{e}} - \bv{b}_{\s} - \bv{b}_{\f} + \Grad{p}
    \bigr)^{\circd{s}} 
    \cdot \tilde{\bv{u}}_{\s}/\theta
    \\
    &\quad
    + \int_{B_{\s}}
    J_{\s}
    (\ts{T}_{\s}^{e} + \ts{T}_{\s}^{v} + \ts{T}_{\f}^{v})^{\circd{s}} \inversetranspose{\ts{F}}_{\s} \colondot \Grad{\tilde{\bv{u}}_{\s}}/\theta
\\
&\quad+
\int_{B_{\s}}
J_{\s} \bigl[
 (\rho_{\f} \bv{a}_{\f}^{\circd{e}} - \bv{b}_{\f} + \phi_{\f}\Grad{p} - \bv{f}_{\f})^{\circd{s}} \cdot \tilde{\bv{v}}_{\f}^{\circd{s}}
+ (\ts{T}_{\f}^{v})^{\circd{s}} \inversetranspose{\ts{F}}_{\s} \colondot \Grad{\tilde{\bv{v}}_{\f}^{\circd{s}}}\bigr]
\\
&\quad+
\int_{\mathcal{S}_{\s}}
\llbracket
\rho_{\f}(\bv{v}_{\f}^{\circd{e}} - \bv{v}_{\s}^{\circd{e}}) \otimes (\bv{v}_{\f}^{\circd{e}} - \bv{v}_{\s}^{\circd{e}}) + p \ts{I}
\rrbracket^{\circd{s}} J_{\s} \inversetranspose{\ts{F}}_{\s}  \bv{m}_{\s}
\cdot \tilde{\bv{u}}_{\s}/\theta
\\
&\quad-
\int_{\mathcal{S}_{\s}}
\biggl\llbracket
\Bigl(
k_{\f} \ts{I}
-
\rho_{\f} \bv{v}_{\f}^{\circd{e}} \otimes (\bv{v}_{\f}^{\circd{e}} - \bv{v}_{\s}^{\circd{e}})
\Bigr)^{\circd{s}} J_{\s} \inversetranspose{\ts{F}}_{\s} \bv{m}_{\s} \cdot \tilde{\bv{v}}_{\f}^{\circd{s}}
\biggr\rrbracket
\\
&\quad-
\int_{\mathcal{S}_{\s}}
\Bigl\llbracket
\bigl(
\phi_{\f} (\ip - p) \, \ts{I}
-
\tfrac{1}{2} \phi_{\f} \mu_{\mathcal{S}} \llbracket \bv{v}_{\flt} \rrbracket 
\otimes \bv{m}
\bigr)^{\circd{s}} J_{\s}\inversetranspose{\ts{F}}_{\s}\bv{m}_{\s}
\cdot \tilde{\bv{v}}_{\f}^{\circd{s}}
\Bigr\rrbracket
\\
&\quad
-
\int_{B_{\s}}
J_{\s} (\bv{v}_{\s} + \bv{v}_{\flt}^{\circd{s}}) \cdot \inversetranspose{\ts{F}}_{\s}\Grad{\tilde{p}^{\,\circd{s}}}
-
\int_{\mathcal{S}_{\s}} 
\llbracket
(\bv{v}_{\s} +
\bv{v}_{\flt}^{\circd{s}}) \, \tilde{p}^{\circd{s}}
\rrbracket \cdot J_{\s} \inversetranspose{\ts{F}}_{\s} \bv{m}_{\s}
\\
&\quad
+
\int_{\mathcal{S}_{\s}} 
\llbracket
\bv{v}_{\flt}^{\circd{s}}
\rrbracket
\tilde{\ip}^{\circd{s}} \cdot
J_{\s} \inversetranspose{\ts{F}}_{\s} \bv{m}_{\s}
+
\int_{\partial B_{\s_{\text{ext}}}} \tilde{p}^{\circd{s}} (\bv{v}_{\s} + \bv{v}_{\flt}^{\circd{s}}) \cdot J_{\s} \inversetranspose{\ts{F}}_{\s} \bv{n}_{\s},
\end{aligned}
\end{equation}
where $\partial B_{\s_{\text{ext}}}$ denotes the external boundary of $B_{\s}$.

\begin{proposition}[Formal stability estimate]
\label{prop: formal energy estimate}
    Setting to zero the body forces and the boundary data, if, for almost every $t\in [0,T]$, $\bv{u}_{\s}$, $\bv{v}_{\f}^{\circd{s}}$, $p^{\circd{s}}$, and $\ip^{\circd{s}}$ solve Eq.~\eqref{eq: Hamiltons principle final global}, then the following energy estimate holds:
\begin{equation}
\label{eq: formal stability estimate}
\frac{\d{}}{\d{t}} \int_{B(t)} (k_{\s} + k_{\f} + \psi_{\s}) + \int_{B(t)} \mathscr{D} + \int_{\mathcal{S}(t)} \mathscr{D}_{\mathcal{S}(t)} = 0.
\end{equation}
\end{proposition}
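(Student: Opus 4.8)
\emph{Proof plan.} The idea is to insert the physical rates of the solution as test functions into the variational identity~\eqref{eq: Hamiltons principle final global} (equivalently, into the practical weak form~\eqref{eq: Abstract FEM mixture}), which is the variational analogue of the choice $\delta q = \dot q$ and is the standard device for producing a~priori energy bounds of mixed formulations. At almost every $t\in[0,T]$ we take $\delta\bv{\chi}_{\s} = \bv{v}_{\s}$ and $\delta\bv{\chi}_{\frs} = \bv{v}_{\frs}$; these are admissible because they carry the regularity assumed of the variations and vanish where the homogeneous Dirichlet data are prescribed. By the substitutions in~\eqref{eq: test function substitution} and the last of Eqs.~\eqref{eq: vf rel vs}, this amounts to $\tilde{\bv{u}}_{\s} = \theta\,\bv{v}_{\s}$ and $\tilde{\bv{v}}_{\f} = -\ts{F}_{\s}\ts{F}_{\frs}^{-1}\bv{v}_{\frs} = \bv{v}_{\f}^{\circd{e}} - \bv{v}_{\s}^{\circd{e}}$, where $\bv{v}_{\f} - \bv{v}_{\s}$ is an admissible fluid test function because $\bv{v}_{\s}$ is globally continuous and hence $\bv{v}_{\f} - \bv{v}_{\s}$ inherits whatever (possibly discontinuous) regularity $\bv{v}_{\f}$ has. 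Finally, we use that the solution obeys the constraints $\Div{\bv{v}_{\va}} = 0$ in $B(t)$ and $\llbracket\bv{v}_{\flt}\rrbracket\cdot\bv{m} = 0$ on $\mathcal{S}(t)$ (cf.\ Eqs.~\eqref{eq: div vvavg constraint expanded} and~\eqref{eq: mass balance jump condition}), so that the power expended by $p$ and $\ip$ against the constrained motion vanishes; when one works with~\eqref{eq: Abstract FEM mixture}, this is secured by additionally taking $\tilde{p} = p^{\circd{s}}$ and $\tilde{\ip} = \ip^{\circd{s}}$.

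Next I would collect the bulk terms. Combining the mixture-momentum slot tested with $\bv{v}_{\s}$ and the fluid-momentum slot tested with $\bv{v}_{\f} - \bv{v}_{\s}$, the inertia assembles into $\int_{B(t)}(\rho_{\s}\bv{a}_{\s}^{\circd{e}}\cdot\bv{v}_{\s}^{\circd{e}} + \rho_{\f}\bv{a}_{\f}^{\circd{e}}\cdot\bv{v}_{\f}^{\circd{e}})$, which by the mass balances~\eqref{eq: balance of mass} and the transport theorem equals $\tfrac{\d{}}{\d{t}}\int_{B(t)}(k_{\s}+k_{\f})$ with $k_{a}$ as in~\eqref{eq: kinetic energy density def}. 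The elastic part of the solid stress gives $\int_{B_{\s}}\ts{P}_{\s}^{e}\colondot\partial_{t}\ts{F}_{\s}$, which by the hyperelastic form~\eqref{eq: 1PK and Cauchy stress}, the chain rule through $\overline{\ts{C}}_{\s}$, and the time independence of $\phi_{R_{\s}}$ equals $\tfrac{\d{}}{\d{t}}\int_{B_{\s}}\Psi_{\s} = \tfrac{\d{}}{\d{t}}\int_{B(t)}\psi_{\s}$ (cf.\ Eqs.~\eqref{eq: solid strain energy} and~\eqref{eq: eulerian strain energy}). The remaining viscous stress powers together with the Darcy interaction power $-\bv{f}_{\f}\cdot(\bv{v}_{\f}^{\circd{e}} - \bv{v}_{\s}^{\circd{e}})$, evaluated with the constitutive laws~\eqref{eq: solid viscous Cauchy stress},~\eqref{eq: viscous Cauchy stress of fluid in mixture}, and~\eqref{eq: pf and ff full form}, telescope into $\int_{B(t)}\mathscr{D}$ with $\mathscr{D}$ as in~\eqref{eq: Interior rate of dissipation}; a cross term $2\mu_{\f}\ts{D}_{\f}\colondot\ts{D}_{\s}$ that appears with opposite signs in the two momentum slots cancels. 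Lastly, the pressure contributions reduce (after integration by parts) to $\int p\,\Div{\bv{v}_{\va}}$ in the interior, which vanishes by the constraint, plus surface pressure terms absorbed in the interface step below. With body forces set to zero, this exhausts the interior.

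Turning to $\mathcal{S}(t)$, I would insert $\delta\bv{\chi}_{\s} = \bv{v}_{\s}$ and $-\ts{F}_{\s}\ts{F}_{\frs}^{-1}\delta\bv{\chi}_{\frs} = \bv{v}_{\f}^{\circd{e}} - \bv{v}_{\s}^{\circd{e}}$ into the surface integrals. Using the continuity of $\bv{v}_{\s}$ and of $J_{\s}\inversetranspose{\ts{F}}_{\s}\bv{m}_{\s}$ from~\eqref{eq: regularity of chis}, the relations~\eqref{eq: continuity of the orienting set}, $\llbracket d\rrbracket = 0$ from~\eqref{eq: d def}, and the balance-of-mass jump $\llbracket\bv{v}_{\flt}\rrbracket\cdot\bv{m} = 0$ (which also disposes of the $\ip$ term and of the surface pressure terms), one checks that the convective contributions carrying $k_{\f}\ts{I}$ and $\rho_{\f}\bv{v}_{\f}^{\circd{e}}\otimes(\bv{v}_{\f}^{\circd{e}} - \bv{v}_{\s}^{\circd{e}})$ and the contributions carrying $\phi_{\f}\ip\,\ts{I}$ and $\phi_{\f}p\,\ts{I}$ cancel, leaving only the term carrying $\mu_{\mathcal{S}}$, which collapses to $\int_{\mathcal{S}(t)}\tfrac12\mu_{\mathcal{S}}\|\llbracket\bv{v}_{\flt}\rrbracket\|^{2} = \int_{\mathcal{S}(t)}\mathscr{D}_{\mathcal{S}(t)}$ (cf.\ Eq.~\eqref{eq: Surface Dissipation potential}). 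Collecting the three steps and discarding all boundary integrals (zero boundary data) gives $0 = \tfrac{\d{}}{\d{t}}\int_{B(t)}(k_{\s}+k_{\f}+\psi_{\s}) + \int_{B(t)}\mathscr{D} + \int_{\mathcal{S}(t)}\mathscr{D}_{\mathcal{S}(t)}$, which is~\eqref{eq: formal stability estimate}. As a consistency check, the bulk portion of this computation reproduces the \ac{TPE}~\eqref{eq: TPE Mixture} (Proposition~\ref{claim: TPE}) for $\Omega = B(t)$ with $\mathcal{W}_{\text{ext}} = 0$, while the variationally derived fluid jump condition renders the interface purely dissipative, so $\mathscr{I} = \mathscr{D}_{\mathcal{S}(t)}$ exactly as in Remark~\ref{remark: Inert vs dissipative interface} and Eq.~\eqref{eq: dissipative interface case}.

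The step I expect to be the main obstacle is the interface bookkeeping: on $\mathcal{S}(t)$ the convective and pressure terms are jumps of products of fields that are individually discontinuous, so the cancellations require systematic use of the product-jump identity $\llbracket ab\rrbracket = \llbracket a\rrbracket\langle b\rangle + \langle a\rangle\llbracket b\rrbracket$ in combination with $\llbracket\bv{v}_{\s}\rrbracket = \bv{0}$, $\llbracket d\rrbracket = 0$, and $\llbracket\bv{v}_{\flt}\rrbracket\cdot\bv{m} = 0$, and one must keep careful track of which remapping among $\mathcal{S}_{\s}$, $\mathcal{S}_{\f}(t)$, and $\mathcal{S}(t)$ is in force at each stage. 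A subsidiary point worth spelling out is that $\bv{v}_{\f} - \bv{v}_{\s}$ is genuinely the image of an admissible variation of $\bv{\chi}_{\frs}$, so that the argument needs no regularity beyond what the formulation already postulates for the fields.
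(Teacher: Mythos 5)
Your proposal is correct and follows essentially the same route as the paper's proof: the same test-function choices ($\tilde{\bv{u}}_{\s}/\theta = \bv{v}_{\s}$, $\tilde{\bv{v}}_{\f}^{\circd{s}} = \bv{v}_{\f}^{\circd{s}} - \bv{v}_{\s}$, $\tilde{p}^{\circd{s}} = p^{\circd{s}}$, $\tilde{\ip}^{\circd{s}} = \ip^{\circd{s}}$), the same assembly of the viscous and Darcy terms into $\mathscr{D}$, and the same final appeal to the kinetic-energy identity and the theorem-of-power-expended machinery (Propositions~\ref{claim: acceleration kinetic energy relation} and~\ref{claim: TPE}) to convert the inertial and elastic terms into time derivatives of $k_{\s}+k_{\f}+\psi_{\s}$ while absorbing the interface convective terms. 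The interface bookkeeping you flag as the main obstacle is exactly the step the paper delegates to those appendix propositions.
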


\begin{proof}
    Set
\begin{equation*}
\tilde{\bv{u}}_{\s}/\theta = \bv{v}_{\s},\quad
\tilde{\bv{v}}_{\f}^{\circd{s}} = \bv{v}_{\f}^{\circd{s}} - \bv{v}_{\s},\quad
\tilde{p}^{\circd{s}} = p^{\circd{s}},
\quad\text{and}\quad
\tilde{\ip}^{\circd{s}} = \ip^{\circd{s}}.
\end{equation*}
This substitution yields
\begin{equation}
\label{eq: Proof of energy estimate step 1}
\begin{aligned}[b]
0 &= \int_{B_{\s}}
J_{\s} \bigl(
    \rho_{\s} \bv{a}_{\s}^{\circd{e}} + \rho_{\f} \bv{a}_{\f}^{\circd{e}}
    \bigr)^{\circd{s}} 
    \cdot \bv{v}_{\s}
    \\
    &\qquad
    + \int_{B_{\s}}
    J_{\s}
    (\ts{T}_{\s}^{e} + \ts{T}_{\s}^{v} + \ts{T}_{\f}^{v})^{\circd{s}} \inversetranspose{\ts{F}}_{\s} \colondot \Grad{\bv{v}_{\s}}
\\
&\quad+
\int_{B_{\s}}
\bigl[
J_{\s} (\rho_{\f} \bv{a}_{\f}^{\circd{e}} 
- \bv{f}_{\f})^{\circd{s}} \cdot (\bv{v}_{\f}^{\circd{s}} - \bv{v}_{\s})
+ (\ts{T}_{\f}^{v})^{\circd{s}} \inversetranspose{\ts{F}}_{\s} \colondot \Grad{(\bv{v}_{\f}^{\circd{s}} - \bv{v}_{\s})}\bigr]
\\
&\quad+
\int_{\mathcal{S}_{\s}}
\llbracket
\rho_{\f}(\bv{v}_{\f}^{\circd{e}} - \bv{v}_{\s}^{\circd{e}}) \otimes (\bv{v}_{\f}^{\circd{e}} - \bv{v}_{\s}^{\circd{e}}) 
\rrbracket^{\circd{s}} J_{\s} \inversetranspose{\ts{F}}_{\s}  \bv{m}_{\s}
\cdot \bv{v}_{\s}
\\
&\quad-
\int_{\mathcal{S}_{\s}}
\biggl\llbracket
\Bigl(
k_{\f} \ts{I}
-
\rho_{\f} \bv{v}_{\f}^{\circd{e}} \otimes (\bv{v}_{\f}^{\circd{e}} - \bv{v}_{\s}^{\circd{e}})
\Bigr)^{\circd{s}} J_{\s} \inversetranspose{\ts{F}}_{\s} \bv{m}_{\s} \cdot (\bv{v}_{\f}^{\circd{s}} - \bv{v}_{\s})
\biggr\rrbracket
\\
&\quad
+
\int_{\mathcal{S}(t)}
\tfrac{1}{2} \mu_{\mathcal{S}}
\llbracket \bv{v}_{\flt} \rrbracket
\cdot
\llbracket \bv{v}_{\flt} \rrbracket.
\end{aligned}
\end{equation}
Substituting Eqs.~\eqref{eq: Ts and Tf overall} and~\eqref{eq: pf and ff full form} in the above expression, and using the definitions in Eqs.~\eqref{eq: Interior rate of dissipation} and~\eqref{eq: Surface Dissipation potential}, we have
\begin{equation}
\label{eq: Proof of energy estimate step 2}
\begin{aligned}[b]
0 &= \int_{B(t)}
\bigl(
    \rho_{\s} \bv{a}_{\s}^{\circd{e}} \cdot \bv{v}_{\s}^{\circd{e}}
    +
    \rho_{\f} \bv{a}_{\f}^{\circd{e}} \cdot \bv{v}_{\f}^{\circd{e}} \bigr)
    +
    \int_{B(t)}
    J_{\s}
    (\ts{T}_{\s}^{e})^{\circd{s}} \inversetranspose{\ts{F}}_{\s} \colondot \Grad{\bv{v}_{\s}}
\\
&\quad+
\int_{B(t)} \mathcal{D}^{\circd{s}} +
\int_{\mathcal{S}(t)}
\mathcal{D}_{\mathcal{S}(t)}.
\end{aligned}
\end{equation}
Using the results in Propositions~\ref{claim: acceleration kinetic energy relation} and~\ref{claim: TPE} in Appendix~\ref{Appendix B}, Eq.~\eqref{eq: formal stability estimate} follows.
\end{proof}

\begin{remark}[Consistency of the energy estimate]
    \label{remark: energy estimate}
    The energy estimate just proven is consistent with that in Eq.~\eqref{eq: TPE Mixture} when Eq.~\eqref{eq: dissipative interface case} is used and every supply of power to the system is suppressed. We note that, for any physically admissible formulation, and excluding the trivial case of rigid body motions, the last two terms on the left-hand side of Eq.~\eqref{eq: formal stability estimate} are positive for any motion of the fluid relative to the solid, thereby ensuring a monotonic decline of the sum of the kinetic, strain, and stored energies of the system.
\end{remark}

\begin{remark}[Analysis and \ac{FEM}]
    \label{remark: FEM implementation}
    We are not aware of a formal analysis of this specific problem articulated in the language of Sobolev spaces. Nor are we aware of an analysis within the context of the \ac{FEM}.  
    We have implemented Eq.~\eqref{eq: Abstract FEM mixture} using standard \ac{FEM} approaches typical of \ac{ALE} formulations. The interface $\mathcal{S}_{\s}$ was simply modeled in a conformal way, namely, it consisted of inter-element boundaries. The chosen interpolation functions are as follows:
    \begin{itemize}
        \item \nth{2} order Lagrange polynomials for the (globally continuous) displacement field.

        \item \nth{2} order Lagrange polynomials for the fluid velocity field. This field is not continuous across the interface.

        \item
        \nth{1} order Lagrange polynomials for the field $p$, which is not continuous across the interface.

        \item \nth{1} order Lagrange polynomials for the field $\ip$, which is defined only over the interface.
    \end{itemize}
    The appropriateness of this selection of finite element spaces is supported by a recent work by Barnafi \emph{et al.},\cite{Barnafi2021Mathematical-An} which, along with Ref.~\refcite{Bansal2024A-Lagrange-Mult}, offers some guidance for a rigorous analysis of the present work.
\end{remark}

\section{Results}
\label{Sec: Examples}
In this section, we present three sets of results. We begin by verifying our \ac{FEM} implementation using the method of manufactured solutions.\cite{Salari2000Code-Verification-0} We then present our analytical solution to a classic problem, to compare our results with existing ones. Finally, we solve an example problem to investigate the effect of the interface viscosity $\mu_\mathcal{S}$ on the jumps in the tangential component of filtration velocity and the pore pressure.

The numerical implementation of the formulation presented in this paper was done in \comsol,\cite{COMSOLCite} which has been used as a \ac{FEM} programming environment. We used a monolithic solver. For time integration we used the available variable time step and variable order \ac{BDF} method.\cite{Quarteroni2000Numerical-Mathematics-0} We have already mentioned the specific interpolation used. Additional details for the results presented will be indicated in context.

\subsection{Verification}
 To verify the formulation, we employed the method of manufactured solutions.\cite{Salari2000Code-Verification-0} We consider the planar geometry displayed in Fig.~\ref{fig:mms-problem-setup},
\begin{figure}[hbt]
    \centering
    \includegraphics[width=0.3\linewidth]{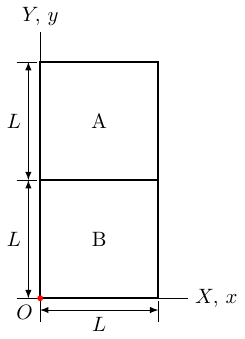}
    \caption{Two contiguous square poroelastic domains A and B used for the verification of the \ac{FEM} implementation of our formulation through the method of manufactured solutions.}
    \label{fig:mms-problem-setup}
\end{figure}
 which consists of two contiguous poroelastic domains with reference configurations A and B, respectively, aligned vertically and whose material properties are presented in Table~\ref{tab:mms-properties}.
\begin{table}[hbt]
\centering
\caption{Summary of parameter values used in the convergence rate study and in the last example.}
\label{tab:mms-properties}
\begin{tabular}{cccc}
\toprule
Quantity & Domain A & Domain B & Domains A \& B\\
\colrule
$\rho_s^*$          & & & \np[kg/m^3]{1e3}      \\
$\rho_f^*$          & & & \np[kg/m^3]{1e3}      \\
    $\mu_D$         & & & \np[Pa\cdot s]{1e-2}  \\
$\bar{\mu}_B$       & & & \np[Pa\cdot s]{1e-2}  \\
$\bar{\mu}_\f$      & & & \np[Pa\cdot s]{1e-2}  \\
$\mu_\mathcal{S}$   & & & \np[Pa\cdot s/m]{1e3} \\
$\phi_{R_{\s}}$     & $0.4$ & $0.8$ &           \\
$k_s$               & \np[m^2]{1e-9}
                    & \np[m^2]{1e-11}
                    &                           \\
$\mu_{R_e}$         & \np[Pa]{1e3}
                    & \np[Pa]{1e4}
                    &                           \\
\botrule
\end{tabular}
\end{table}
Referring to Fig.~\ref{fig:mms-problem-setup}, the position $\bv{X}$ of points in the reference configuration is expressed through the coordinates $X$ and $Y$. The position $\bv{x}$ of points in the deformed configuration is expressed through the coordinates $x = X + u_{\s_{X}}(X,Y,t)$ and $y = Y + u_{\s_{Y}}(X,Y,t)$, where $u_{\s_{X}}$ and $u_{\s_{Y}}$ are the horizontal and vertical components of the displacement field.
The images of A and B under the motion will be denoted by $\mathrm{A}(t)$ and $\mathrm{B}(t)$, respectively.
The interface's reference configuration $\mathcal{S}_{\s}$ is the segment with $X\in[0,L]$ and $Y = L$.  For both domains, the neo-Hookean hyperelastic material model $\hat{\Psi}_{\s}=\tfrac{1}{2} \mu_{R_e} \left(\bar{I}_1 - 3 \right)$ is chosen for the strain energy density of the pure solid phase, where $\bar{I}_1$ is the first principal invariant of $\bar{\ts{C}}_{\s} = J_{\s}^{-2/3} \ts{C}_{\s}$, namely the isochoric component of the right Cauchy-Green strain tensor $\ts{C}_{\s}$. $\mu_{R_e}$ is the referential solid's elastic shear modulus. Moreover, it is assumed that $\mu_\f=\bar{\mu}_\f \phi_\f^2$ and $\mu_B=\bar{\mu}_B \phi_\f^2$.

The manufactured solution consists of the following fields:
\begin{align}
    \label{eq: mms pA}
    p &= p_o \bigl(1-\cos(\pi t/t_o)\bigr) \cos\bigl[\pi(x+y)/L\bigr]
        & &\text{in $\mathrm{A}(t)$},
    \\
    \label{eq: mms pB}
    p &= p_o\bigl(1-\cos(\pi t/t_o)\bigr) \sin\bigl[\pi(x+y)/L\bigr]
        & &\text{in $\mathrm{B}(t)$},
    \\
    \label{eq: mms usXAB}
    u_{\s_{X}} &=
        u_o \bigl(1-\cos(\pi t/t_o)\bigr) \cos\bigl[\pi(X+Y)/L\bigr]
            & &\text{in A}\cup\text{B},\\
    \label{eq: mms usYAB}
    u_{\s_{Y}} &=
        u_o \bigl(1-\cos(\pi t/t_o)\bigr)
        \sin\bigl[\pi(X+Y)/L\bigr]
            & &\text{in A}\cup\text{B},
    \\
    \label{eq: mms vfe}
    \bv{v}_\f^{\circd{e}} &= (\partial_t \bv{u}_\s)^{\circd{e}} - (\phi_\f/\gamma) \Grad{p}
        & &\text{in $\mathrm{A}(t) \cup \mathrm{B}(t)$,}
    \\
    \label{eq: mms ipSt}
    \ip &= \ip_o \bigl(1-\cos(\pi t/t_o)\bigr) \cos(\pi x/L)
        & &\text{on $\mathcal{S}(t)$},
\end{align}
where $\gamma = (\mu_D \phi_\f^2)/\kappa_\s$, $u_o=\np[mm]{1}$, $p_o=\np[kPa]{2.5}$, $\ip_o=\np[kPa]{1}$, $L=\np[mm]{10}$, and $t_o=\np[s]{1}$.   These fields are substituted in the strong form of the governing equations to calculate the source terms in the interiors of their domains of definition as well as on the interface.

The initial conditions are those dictated by the manufactured solution for $t = 0$. For the solid part of the problem, the boundary conditions consist of prescribed displacements on the entirety of the external boundary; for the fluid part, traction boundary conditions are prescribed on the very top and bottom edges of the external boundary; the fluid's velocity is prescribed on the lateral surfaces. 

We have already indicated the choice of finite element spaces for our solutions. As far as the time integration part of the problem, we employed a \ac{BDF} method of order 2 with a constant time step of $\np[s]{0.001}$.

The selected manufactured solution in terms of solid displacement, fluid velocity, and pore pressure is depicted in Fig.~\ref{fig:mms-fields}
\begin{figure}[hbt]
    \centering
    \begin{subfigure}{0.2\textwidth}
        \centering
        \includegraphics[width=\linewidth]{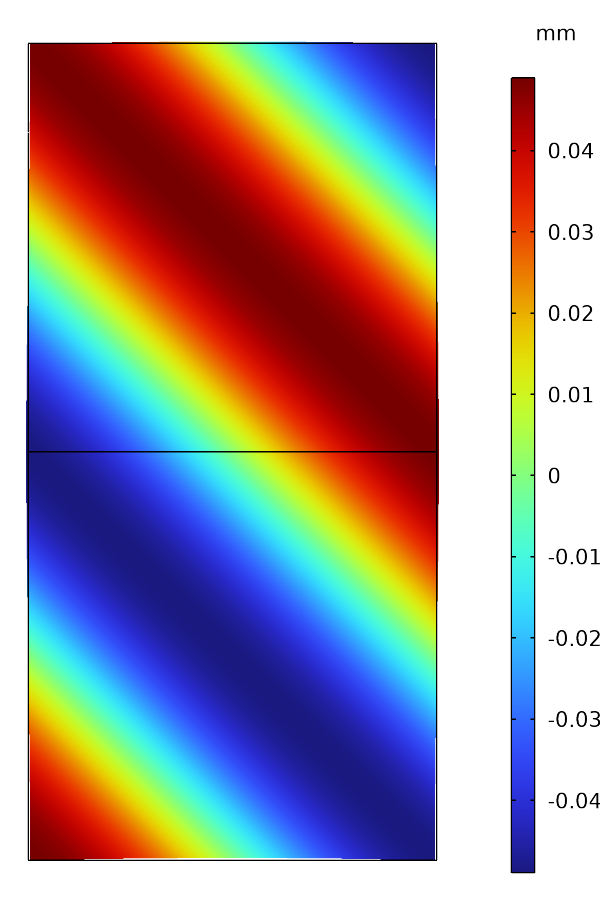}
        \caption{$\bv{u}_{\s_X}$.}
    \end{subfigure}
    \begin{subfigure}{0.2\textwidth}
        \centering
        \includegraphics[width=\linewidth]{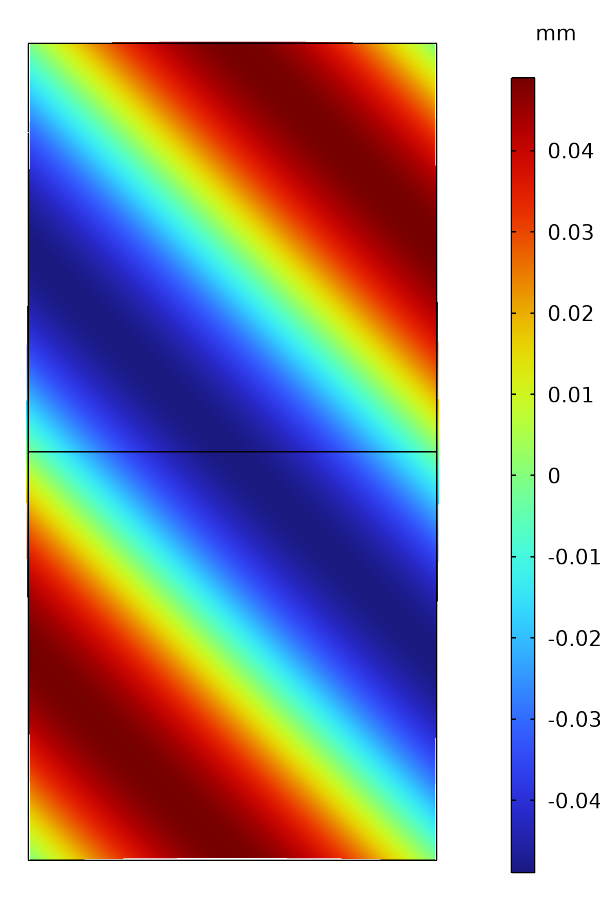}
        \caption{$\bv{u}_{\s_Y}$.}
    \end{subfigure}
    \\
    \begin{subfigure}{0.2\textwidth}
        \centering
        \includegraphics[width=\linewidth]{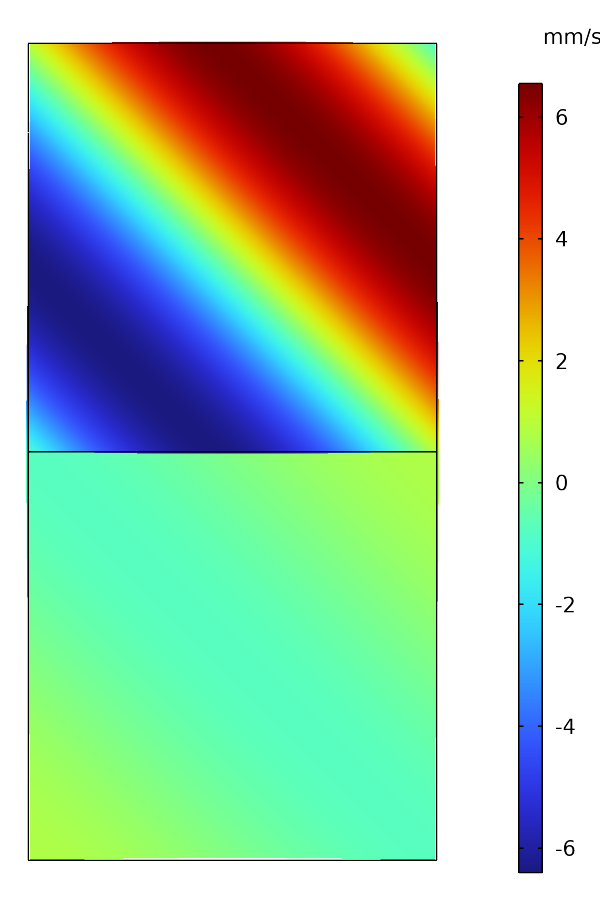}
        \caption{$\bv{v}_{\f_x}^{\circd{s}}$.}
    \end{subfigure}
    \begin{subfigure}{0.2\textwidth}
        \centering
        \includegraphics[width=\linewidth]{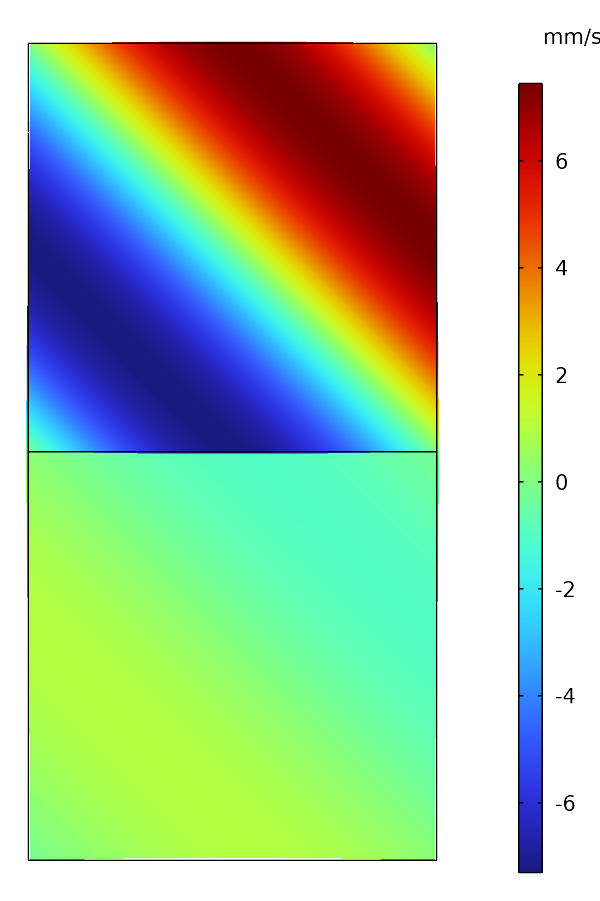}
        \caption{$\bv{v}_{\f_y}^{\circd{s}}$.}
    \end{subfigure}
    \begin{subfigure}{0.2\textwidth}
        \centering
        \includegraphics[width=\linewidth]{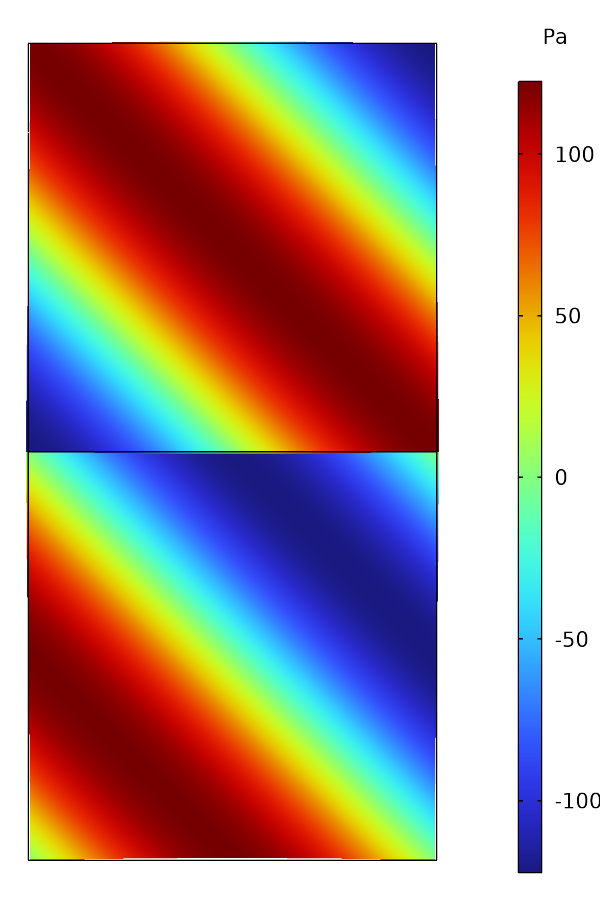}
        \caption{$p^{\circd{s}}$.}
    \end{subfigure}
    \caption{Panels (a) and (b) show the manufactured solutions for components of the displacement of the solid phase. Panels (c) and (d) show the components of the velocity of the fluid phase. Panel (e) shows the pore pressure. All plots are at time $t=\np[s]{0.1}$. For simplicity, all the plots are over the reference configuration of the solid phase.}
    \label{fig:mms-fields}
\end{figure}
at $t = \np[s]{0.1}$. This figure shows that indeed the selected solution is globally continuous in the displacement field, whereas it is discontinuous in the fluid velocity field and in the pore pressure field.

The mesh employed was made up of identical triangles and was uniformly refined through four cycles. The initial mesh diameter was $h = \np[mm]{1.25}$.

Figure~\ref{fig:mms-conv-rates}
\begin{figure}[hbt]
    \centering
    \begin{subfigure}{0.48\textwidth}
        \centering
        \includegraphics[width=\linewidth]{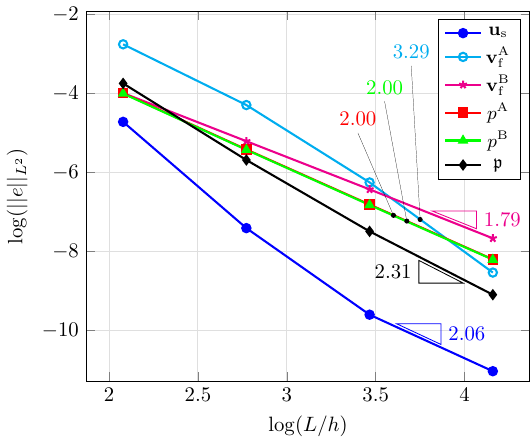}
        \caption{}
        \label{fig:mms-conv-rates-l2}
    \end{subfigure}
    \hfill
    \begin{subfigure}{0.48\textwidth}
        \centering
        \includegraphics[width=\linewidth]{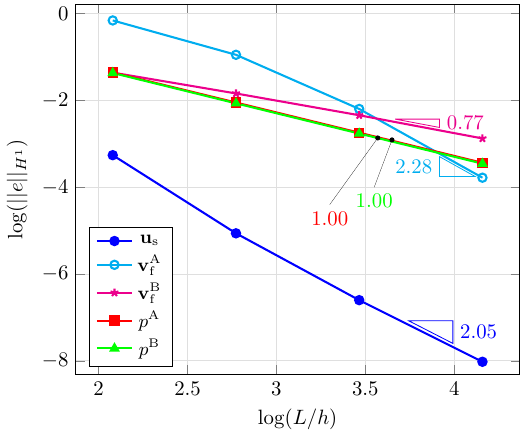}
        \caption{}
        \label{fig:mms-conv-rates-h1}
    \end{subfigure}
    \caption{Convergence rates for \subref{fig:mms-conv-rates-l2} the $L^2$-norm and \subref{fig:mms-conv-rates-h1} the $H^1$-semi-norm of the relative error $e$ between numerical and exact solution in all variables, at time $t=\np[s]{0.1}$, obtained using second-order Lagrange polynomials for the solid displacement and fluid velocity and first-order Lagrange polynomials for the pressure and the interface pressure. The computation of the norms of the relative errors is done via integrals that respect the domain of definition of said errors (e.g., the norms of the relative errors pertaining to $(\bv{v}_{\f}^{\mathrm{A}})^{\protect\circd{e}}$ are computed over $\mathrm{A}(t)$).
    }
    \label{fig:mms-conv-rates}
\end{figure}
presents the convergence rates under uniform refinement. Specifically, we report the $L^2$-norm and the $H^1$-semi-norm of the relative errors for each of the quantities specified in the manufactured solution. As the functions indicated in Eqs.~\eqref{eq: mms pA}--\eqref{eq: mms ipSt} each have their own domain of definition, the relative errors for these functions are computed via integrals that are consistent with said domains.

Provided that a formal analysis is not available so as to assess the optimality of the convergence rates, the convergence rates estimated here are rather good when compared to those typically obtained for the Navier-Stokes problem. We find this result rather encouraging, especially in view of the fact that we have relied on readily available numerical tools as opposed to devise custom-made algorithms for this problem.

\subsection{A classic problem}
To test our mathematical framework, we analytically solved a benchmark problem involving Poiseuille flow over a porous medium. In the Beavers-Joseph problem, a viscous fluid flows under a constant pressure gradient in a channel where the wall at the fluid-porous medium interface is permeable,\cite{Beavers1967Boundary-Condi,Discacciati2009Navier-Sto,Shim2022A-Hybrid-Biphas} as illustrated in Fig.~\ref{fig:poiseuille-flow}. 
\begin{figure}[hbt]
    \centering
    \includegraphics[width=0.7\linewidth]{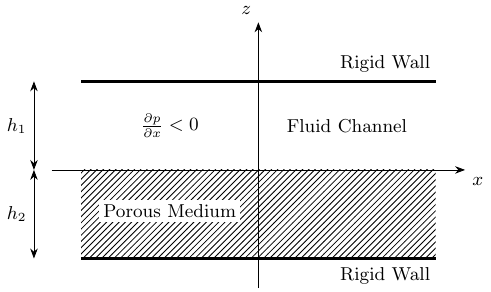}
    \caption{Schematic view of viscous fluid flow over a rigid porous medium, driven by a constant pressure gradient.}
    \label{fig:poiseuille-flow}
\end{figure}
The top and bottom boundaries of the system are assumed rigid and stationary, and the fluid velocity at these boundaries is set to zero. The flow inside the channel and the porous medium is only in the $x$ direction. Among others, this problem was previously considered by Hou \etal{}\cite{Hou1989Boundary-condit} in formulating boundary conditions at the interface between the pure fluid and the porous medium. In their study, they introduced a ``pseudo-no-slip'' kinematic interface condition based on the idea that the conditions at the interface between mixtures, or between mixtures and fluids, should reduce to those of single-phase continuum mechanics. They mentioned that, to formulate a well-posed mathematical problem for each subdomain of the fluid-solid interaction, it is necessary not only to satisfy the jump conditions for mass and momentum conservation, but also to impose additional kinematic constraints.
They chose to enforce the continuity of the pore pressure and the full continuity of the filtration velocity.

Here we solved the same problem but enforcing the jump conditions that were obtained in Eqs.~\eqref{eq: final abstract form of the jump condition of the fluid momentum}.

We begin by observing that the unidirectional nature of the fluid flow and the rigidity of the porous medium imply that the mass balance is trivially satisfied. The dimensionless form of the balance of linear momentum can then be derived for both the fluid channel and the porous medium, and is given by, respectively,
\begin{align}
    \dfrac{\partial^2 V_\mathrm{u}}{\partial Z^2}+1 = 0, \quad & \text{for $0 < Z < H$},
    \shortintertext{and}
    \delta^2 \dfrac{\partial^2 V_\mathrm{l}}{\partial Z^2} - V_\mathrm{l} + \eta \, \delta^2 = 0, \quad & \text{for $-1 < Z < 0$},
\end{align}
where the latter is the simplified version of Eq.~\eqref{eq: balance of momentum fluid from variation} and the former is the dimensionless form of the Navier-Stokes equation in the absence of inertia and body forces.
Here, $V_\mathrm{u}(Z)$ and $V_\mathrm{l}(Z)$ are the dimensionless fluid velocities in the channel and the porous medium, respectively. Additionally, $Z = z/h_{2}$, $H = h_{1}/h_{2}$, $\eta = \phi^{2}_{\f} \mu_{\f}/\mu_{\mathrm{a}}$, and 
$\delta = \sqrt{\mu_{\mathrm{a}}/(h_{2}^{2} \kappa_{\s})}$ are dimensionless parameters, where $\mu_{\mathrm{a}}$ represents the apparent viscosity of the fluid in the porous medium. Since the top and bottom boundaries are fixed and impermeable, the fluid velocity is zero at these boundaries. For the interface between the porous medium and the viscous fluid, the conditions represented in Eqs.~\eqref{eq: final abstract form of the jump condition of the fluid momentum} for the current problem can be simplified as
\begin{align}
    V^\prime_\mathrm{u} - \tfrac{1}{2}\gamma \left( V_\mathrm{u} - V_\mathrm{l} \right) = 0, \quad & \text{for} \quad Z \to 0^+,\\
    V^\prime_\mathrm{l} - \tfrac{1}{2}\gamma \eta \left( V_\mathrm{u} - V_\mathrm{l} \right) = 0, \quad & \text{for} \quad Z \to 0^-,
\end{align}
where $\gamma=\frac{\mu_\mathcal{S} h_2}{\mu_\f}$ is a dimensionless parameter representing the ratio of the interface viscosity to the fluid viscosity. Solving the governing equations along with the boundary conditions yields an analytical solution for the fluid velocity in both the channel and the porous medium.
\begin{figure}[ht!]
    \centering
    \begin{subfigure}{0.49\textwidth}
        \includegraphics[width=\linewidth]{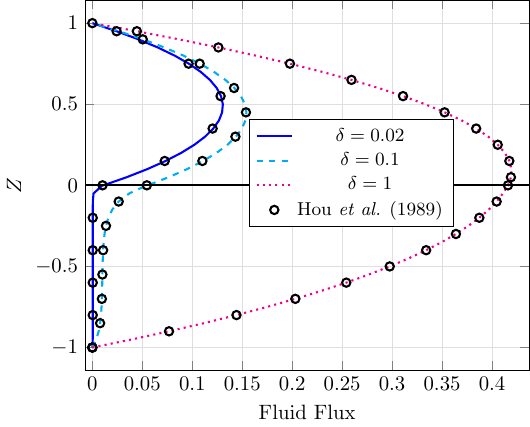}
        \caption{$H=1$}
        \label{fig:HouEtAl_H1_eta1}
    \end{subfigure}
    \hfill
    \begin{subfigure}{0.49\textwidth}
        \includegraphics[width=\linewidth]{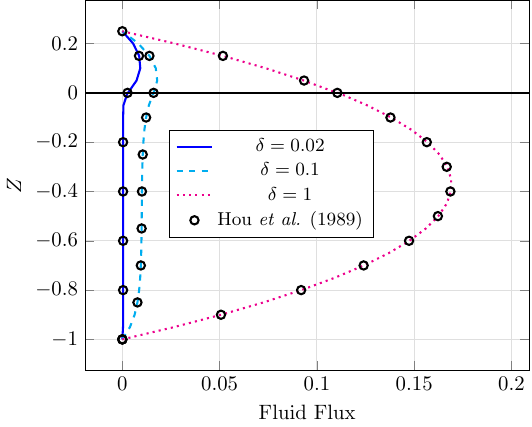}
        \caption{$H=0.25$}
        \label{fig:HouEtAl_H0.25_eta1}
    \end{subfigure}
    \caption{Comparison of profiles of fluid flux in the porous medium and the fluid channel with $\eta=1$ and $\gamma=1000$, for different values of $\delta$, and for (a) $H=1$ and (b) $H=0.25$. The lines represent results according to the formulation presented in this paper whereas the hollow circles represent the solution in Ref.~\protect\refcite{Hou1989Boundary-condit}.}
    \label{fig:HouEtAl_eta1}
\end{figure}

\begin{figure}[htb]
    \centering
    \begin{subfigure}{0.49\textwidth}
        \includegraphics[width=\linewidth]{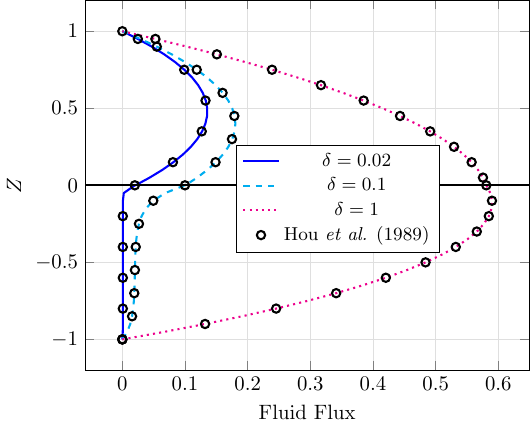}
        \caption{$H=1$}
        \label{fig:HouEtAl_H1_eta2}
    \end{subfigure}
    \hfill
    \begin{subfigure}{0.49\textwidth}
        \includegraphics[width=\linewidth]{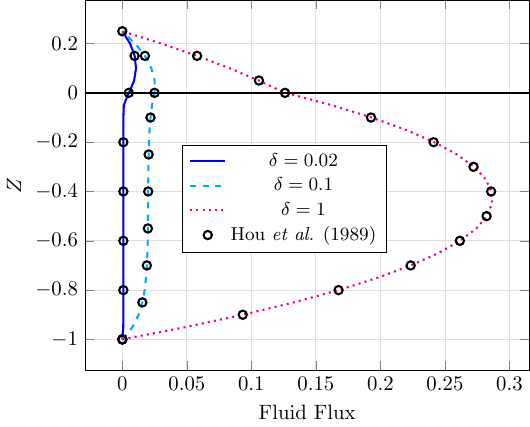}
        \caption{$H=0.25$}
        \label{fig:HouEtAl_H0.25_eta2}
    \end{subfigure}
    \caption{Comparison of profiles of fluid flux in the porous medium and the fluid channel with $\eta=2$ and $\gamma=1000$, for different values of $\delta$, and for (a) $H=1$ and (b) $H=0.25$. The lines represent results according to the formulation presented in this paper whereas the hollow circles represent the solution in Ref.~\protect\refcite{Hou1989Boundary-condit}.}
    \label{fig:HouEtAl_eta2}
\end{figure}
Figures~\ref{fig:HouEtAl_eta1} and~\ref{fig:HouEtAl_eta2} compare our analytical solution and the analytical solution in Hou \etal{}\cite{Hou1989Boundary-condit} Our solution is predicated on the use of a relatively large value of $\gamma$, which can be interpreted as representing a strongly dissipative interface. In this case, we show a strong agreement with the results reported by Hou \etal{}\cite{Hou1989Boundary-condit}, who had enforced continuity of both the pore pressure and the entire filtration velocity across the interface as the implementation of what these authors call a ``pseudo-no-slip'' interface condition. With this result we show that the pseudo-no-slip condition, which is a kinematic constraint, can be viewed as the manifestation of a constitutive assumption on the interface behavior. Furthermore, we show that it is not necessary the continuity of the pressure in a strict sense to obtain the above results.


\subsection{Role of interface viscosity}
To investigate the role of the interface viscosity, we set up a numerical example with the geometry shown in Fig.~\ref{fig:example-problem-setup}.
\begin{figure}[hbt]
    \centering
    \includegraphics[width=0.4\linewidth]{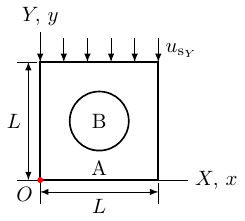}
    \caption{The example problem setup to study the effect of interface viscosity on the discontinuities of filtration velocity and pore pressure. The radius of domain B is $L/4$, where $L$ has the same value as used in the verification problem.}
    \label{fig:example-problem-setup}
\end{figure}
The parameters used are reported in Table~\ref{tab:mms-properties}, with the permeability of both subdomains increased by two orders of magnitude.

In this example problem, the top and bottom boundaries are impermeable, while the lateral boundaries are permeable and traction-free. The bottom boundary is fixed in the vertical direction, and a prescribed vertical displacement $u_{\s_{Y}} = u_o (t/t_o)^2$ is applied to the top boundary, with $u_o$ and $t_{o}$ having the same values used in the manufactured solution considered earlier. 
Both the top and bottom boundaries are free to move in the horizontal direction. We suppress the rigid body motions of the domain A. Moreover, we adopt quiescent initial conditions.

Figure~\ref{fig:jumps}\subref{fig:vflt_t_jump}
\begin{figure}[hbt]
    \centering
    \begin{subfigure}{0.49\textwidth}
        \centering
        \includegraphics[width=\linewidth]{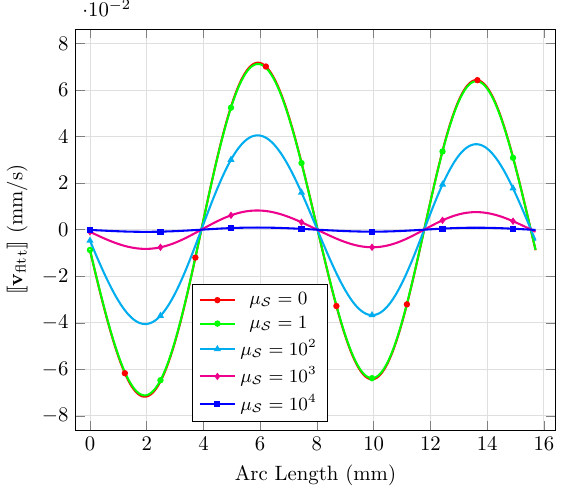}
        \caption{}
        \label{fig:vflt_t_jump}
    \end{subfigure}
    \hfill
    \begin{subfigure}{0.49\textwidth}
        \centering
        \includegraphics[width=\linewidth]{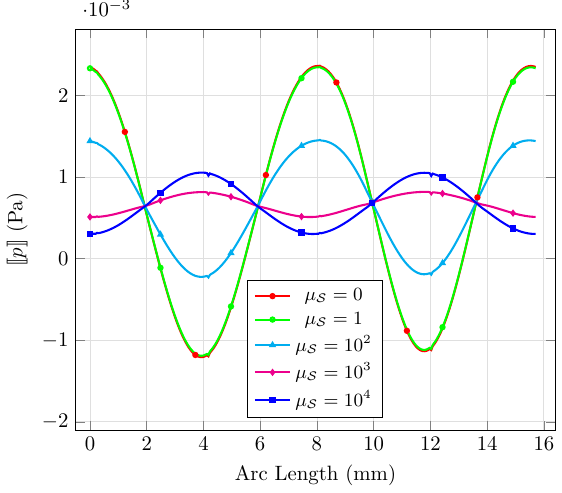}
        \caption{}
        \label{fig:p_jump}
    \end{subfigure}
    \caption{The effect of the interface viscosity $\mu_\mathcal{S}$ on \subref{fig:vflt_t_jump} the jump in the tangential component of the filtration velocity $\llbracket {\bv{v}_\flt}_\mathrm{t} \rrbracket$, and \subref{fig:p_jump} the jump in the pressure $\llbracket p \rrbracket$, at time $t=\np[s]{0.1}$. Both jump variables are plotted as a function of the interface arc length for several increasing values of $\mu_\mathcal{S}$.}
    \label{fig:jumps}
\end{figure}
shows the effect of interface viscosity on the jump in the tangential component of the filtration velocity. As the interface viscosity increases, the norm of the jump of the tangential component of the filtration velocity decreases.
Figure~\ref{fig:jumps}\subref{fig:p_jump} shows the jump in the pressure for different values of the interface viscosity. In this case, the jump in pressure does not seem to vanish. Furthermore, these observations are in agreement with the results in Fig.~\ref{fig:jumps_norms},
\begin{figure}[hbt]
    \centering
    \begin{subfigure}{0.477\textwidth}
        \centering
        \includegraphics[width=\linewidth]{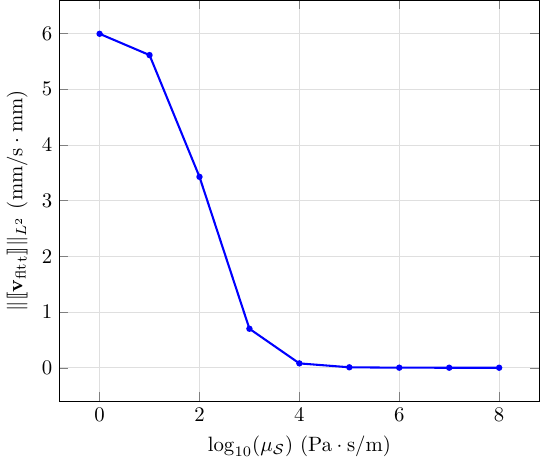}
        \caption{}
        \label{fig:vflt_t_jump_norm}
    \end{subfigure}
    \begin{subfigure}{0.5\textwidth}
        \centering
        \includegraphics[width=\linewidth]{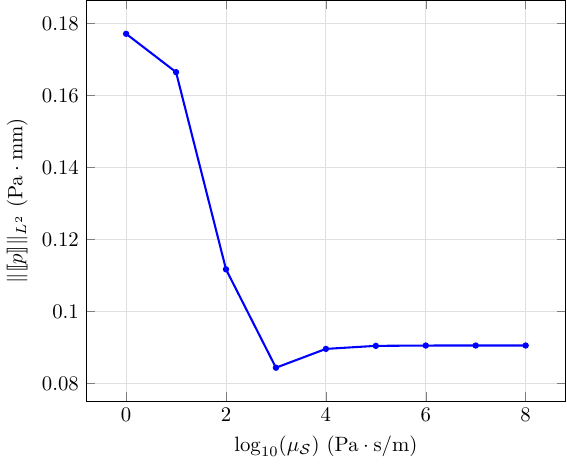}
        \caption{}
        \label{fig:p_jump_norm}
    \end{subfigure}
    \caption{The $L^2$-norm of the jump in \subref{fig:vflt_t_jump_norm} the tangential component of the filtration velocity $\llbracket {\bv{v}_\flt}_\mathrm{t} \rrbracket$, and \subref{fig:p_jump_norm} the pressure $\llbracket p \rrbracket$, as a function of the interface viscosity $\mu_\mathcal{S}$, at time $t=\np[s]{0.1}$.}
    \label{fig:jumps_norms}
\end{figure}
displaying the $L^2$-norm of the two jumps as a function of the interface viscosity $\mu_\mathcal{S}$. Again, as the interface viscosity increases, the jump in the tangential component of the filtration velocity tends to zero, while the jump in the pressure converges to a non-zero value.

\section{Summary, discussion, and conclusions}
\label{Sec: Summary}
In this work we have proposed an approach to a variational formulation of the conditions at the interface separating two contiguous poroelastic domains. The formulation stems from mixture theory. We focus our attention to mixture of constituents that are incompressible in their pure form. This said, the present work is applicable to problems with large deformations. Our approach follows very closely that by dell'Isola \etal{}\cite{dellIsola2009Boundary-Condit-0} and adapts it to the constitutive class considered herein. We also offer a connection to a treatment of the subject according to a traditional continuum mechanics approach.\cite{Bowen1976Theory-of-Mixtu0}

From a modeling perspective, our first goal was to make a more direct link between the constitutive assumptions pertaining to the behavior of the interface and the corresponding interface conditions. This is in contrast to interface conditions that rely on kinematic constraints such as the full continuity of the filtration velocity and/or of the pore pressure. The second goal of our work was to produce a variational formulation of the problem easily implementable as a \ac{FEM} able to respect, at least formally, the energetics of the problem. In such an attempt, we chose test functions for the fluid velocity to have arbitrary jumps at the interface, while the constraint originating with the jump condition of the balance of mass is imposed by way of a Lagrange multiplier.

The results from the examples we have included are encouraging in that we have empirically shown convergence for our formulation using the method of manufactured solutions. In doing so, we have identified what seems to be a practical choice of stable finite element spaces.

We have also solved a classic problem of flow through a rigid porous medium showing that we can recover some well-established results by controlling the constitutive response of the interface. In our case, the formulation contains a dissipative term associated to the norm of the jump of the filtration velocity. This term can be interpreted as a penalization of the jump of the filtration velocity. As the value of the parameter $\mu_{\mathcal{S}}$ is increased, our solution appears to approach a solution with continuous filtration velocity. The last set of results confirms this behavior, this time in a full poroelastic setting. However, it also shows that the jump of the pore pressure field across the interface is not directly controlled by the parameter that controls the jump of the filtration velocity. This conclusion was to be expected as these fields are indeed independent.

In conclusion, we believe that the proposed formulation has some encouraging results that warrant further development and analysis. As mentioned in the introduction, our interest lies in brain physiology and the modeling of membranes in the central nervous system and we plan future applications in this arena. These developments will require more sophisticated models to regulate the pore pressure jump across the interfaces under consideration. Another line of inquiry concerns the adaptation of the proposed \ac{FEM} formulation to more general mixtures, say with compressible constituents, and with the kinematics considered in Ref.~\refcite{dellIsola2009Boundary-Condit-0}. From a numerical perspective, a formal investigation on stability issues associated to the compatibility of spaces of the Lagrange multipliers in relation to the other fields is necessary. An important area of study is also stability relative to constitutive parameters as discussed, for example, by Rodrigo \etal\cite{Rodrigo2018New-Stabilized-}

\appendix
\renewcommand\thesection{\Alph{section}}
\makeatletter
\def\@seccntformat#1{\@ifundefined{#1@cntformat}%
   {{\upshape{\csname the#1\endcsname.}}\hskip .5em}%
   {\csname #1@cntformat\endcsname}}
\newcommand\section@cntformat{{\upshape{\appendixname\ \thesection.}}\hskip .5em}
\makeatother

\section{Notational conventions}
\label{Appendix: Notation}
\subsection{Preliminaries}
$\mathbb{R}$ will denote the real number set, with $\mathbb{R}^{+}$ and $\mathbb{R}^{+}_{0}$ denoting its positive and non-negative subsets, respectively. $\mathcal{E}^{3}$ will denote the three-dimensional Euclidean point space and $t\in \mathbb{R}^{+}_{0}$ time. The translation (vector) space of $\mathcal{E}^{3}$ will be denoted by $\mathscr{V}$. Given vector spaces $\mathscr{U}$ and $\mathscr{W}$, the vector space of linear operators with domain $\mathscr{U}$ and range in $\mathscr{W}$  will be denoted by $\Lin[\mathscr{U}]{\mathscr{W}}$. The expression $\Lin{\mathscr{V}}$ is shorthand for $\Lin[\mathscr{V}]{\mathscr{V}}$.

Physical quantities are described by means of functions whose domain is a specific manifold. Often, the physical meaning of a quantity remains unchanged when the function representing it is composed with a map between a manifold and another. This fact is at the root of the distinction between Eulerian, Lagrangian, and Eulerian--Lagrangian frameworks (cf.\ Refs.~\refcite{GurtinFried_2010_The-Mechanics_0} and~\refcite{Fernandez2009The-Derivation-0}) in Continuum Mechanics. To indicate the domain of a function representing one of these physical quantities, we adopt the shorthand notation in Ref.~\refcite{dellIsola2009Boundary-Condit-0}. For any $t$, let $\Omega_{a}(t) \subset \mathcal{E}^{3}$ and $\Omega_{b}(t) \subset \mathcal{E}^{3}$ be two possibly time-dependent smooth manifolds whose points are denoted by $\bv{X}_{a}$ and $\bv{X}_{b}$, respectively. Let $\bv{\chi}_{t}: \Omega_{a}(t) \to\Omega_{b}(t)$ such that $\bv{\chi}_{t}(\bv{X}_{a}) = \bv{X}_{b}\in\Omega_{b}(t)$ be a homeomorphism. Let $\phi:\Omega_{a}(t)\to\mathscr{U}$ and $\psi: \Omega_{b}(t)\to\mathscr{W}$ be continuous functions  taking values in the vector spaces $\mathscr{U}$ and $\mathscr{W}$, respectively. Then, we denote by $\phi^{\circd{b}}$ and $\psi^{\circd{a}}$ the following functions
\begin{equation}
\label{eq: circd notation}
\phi^{\circd{b}} \coloneqq \phi \circ \bv{\chi}_{t}^{-1}
\quad\text{and}\quad
\psi^{\circd{a}}  \coloneqq \psi \circ \bv{\chi}_{t}.
\end{equation}

As in Ref.~\refcite{GurtinFried_2010_The-Mechanics_0}, we view second-order tensors as linear operators from a vector space to another. Given a second-order tensor $\ts{A}$, its transpose will be denoted by $\transpose{\ts{A}}$. If $\ts{A}$ is invertible, its inverse and its inverse transpose will be denoted by $\ts{A}^{-1}$ and $\inversetranspose{\ts{A}}$, respectively. Given a vector-valued quantity $\bv{u}$ in the domain of $\ts{A}$, the action of $\ts{A}$ on $\bv{u}$ will be denoted by $\ts{A}\bv{u}$.

Given a smooth manifold $\mathcal{A}$ and a point $P\in\mathcal{A}$, $\mathcal{T}_{P}\mathcal{A}$ denotes the tangent space at $P$. Clearly, in an Euclidean context, if $\mathcal{A}$ is a three-dimensional submanifold of $\mathcal{E}^{3}$, its tangent spaces are isomorphic to $\mathscr{V}$. If a metric is defined over $\mathcal{A}$, the notation $\Grad\psi$ will denote the gradient of the field $\psi$. We note that a gradient is not a stand-alone object in that it is inherently linked to the metric manifold over which it is defined\,---\,a gradient on a manifold is different from that on another manifold. The notation $\Div{\square}$ will denote the divergence of the quantity $\square$, assuming that $\square$ admits a divergence. As is the convention in elementary calculus, we will use the notation $\partial_{t}\square$ to denote the derivative of $\square$ with respect to $t$ while holding all other arguments of $\square$ fixed.

Let a smooth surface $\Sigma$ be a codimension-1 submanifold of a three-dimensional $\mathcal{A}\subset\mathcal{E}^{3}$. Let $\bv{\nu}$ be a unit vector field orienting $\Sigma$. Let $\phi(\bv{X})$, $\bv{X}\in\mathcal{A}$, be some field that is continuous away from $\Sigma$. Then, at $\hat{\bv{X}}\in\Sigma$,
\begin{equation}
\label{eq: surface limits def}
\phi^{\pm} \coloneqq
\lim_{s\to0^{+}} \phi(\hat{\bv{X}}\pm s \bv{\nu}),\quad
\llbracket\phi\rrbracket \coloneqq \phi^{+} - \phi^{-},
\quad\text{and}\quad
\langle\phi\rangle \coloneqq \tfrac{1}{2}(\phi^{+}+\phi^{-}).
\end{equation}

\section{}
\label{Appendix B}
Here we provide explicit derivations for some relationships that were referenced in the main text.

\begin{proposition}
\label{claim: acceleration kinetic energy relation}
Under the assumptions stated in the main paper, we have
\begin{equation}
\label{eq: power of material acceleration}
\bv{v}_{a} \cdot \rho_{a} \bv{a}_{a} = \partial_{t} \left(\tfrac{1}{2} \rho_{a} \|\bv{v}_{a}\|^{2}\right)
+
\Div{
\left(
\tfrac{1}{2} \rho_{a} \|\bv{v}_{a}\|^{2} \bv{v}_{a}
\right)
}.
\end{equation}
\end{proposition}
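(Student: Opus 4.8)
The plan is a short direct computation resting on the balance of mass. Throughout, I would read all fields in the spatial (Eulerian) description over $B(t)$, so that $\partial_{t}$ denotes the spatial partial time derivative and $\Div$, $\Grad$ the spatial divergence and gradient; in particular $\bv{a}_{a}$ in the statement is the material acceleration written spatially, $\bv{a}_{a}=\partial_{t}\bv{v}_{a}+\ts{L}_{a}\bv{v}_{a}$ with $\ts{L}_{a}=\Grad\bv{v}_{a}$ as in Eq.~\eqref{eq: definition of Da}.

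First I would expand the right-hand side of Eq.~\eqref{eq: power of material acceleration} with the product rule, treating $\tfrac12\rho_{a}\|\bv{v}_{a}\|^{2}\bv{v}_{a}$ as the scalar $\tfrac12\|\bv{v}_{a}\|^{2}$ times the vector $\rho_{a}\bv{v}_{a}$:
\[
\begin{aligned}
\partial_{t}\bigl(\tfrac12\rho_{a}\|\bv{v}_{a}\|^{2}\bigr)+\Div\bigl(\tfrac12\rho_{a}\|\bv{v}_{a}\|^{2}\bv{v}_{a}\bigr)
&=\tfrac12\|\bv{v}_{a}\|^{2}\bigl[\partial_{t}\rho_{a}+\Div(\rho_{a}\bv{v}_{a})\bigr]\\
&\quad+\rho_{a}\bv{v}_{a}\cdot\partial_{t}\bv{v}_{a}+\tfrac12\Grad\bigl(\|\bv{v}_{a}\|^{2}\bigr)\cdot(\rho_{a}\bv{v}_{a}).
\end{aligned}
\]
Then I would use the elementary identity $\tfrac12\Grad(\|\bv{v}_{a}\|^{2})=\transpose{\ts{L}}_{a}\bv{v}_{a}$ to rewrite the last term as $\rho_{a}\bv{v}_{a}\cdot\ts{L}_{a}\bv{v}_{a}$, and collect the two terms linear in $\bv{v}_{a}$ into $\rho_{a}\bv{v}_{a}\cdot(\partial_{t}\bv{v}_{a}+\ts{L}_{a}\bv{v}_{a})=\rho_{a}\bv{v}_{a}\cdot\bv{a}_{a}$.

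Finally, the remaining bracket $\partial_{t}\rho_{a}+\Div(\rho_{a}\bv{v}_{a})$ vanishes identically, since it is the left-hand side of the first of Eqs.~\eqref{eq: balance of mass} for species $a$. Hence the two surviving contributions yield $\rho_{a}\bv{v}_{a}\cdot\bv{a}_{a}=\bv{v}_{a}\cdot\rho_{a}\bv{a}_{a}$, which is Eq.~\eqref{eq: power of material acceleration}. I do not expect a genuine obstacle here; the only points deserving care are (i) that $\bv{a}_{a}$ must be taken with its convective term $\ts{L}_{a}\bv{v}_{a}$ included, and (ii) the short index check behind $\tfrac12\Grad(\|\bv{v}_{a}\|^{2})\cdot\bv{v}_{a}=\bv{v}_{a}\cdot\ts{L}_{a}\bv{v}_{a}$. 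The identity is precisely what puts the rate of kinetic energy per unit current volume into conservative (divergence) form, and it is the seed used to assemble Proposition~\ref{claim: TPE} and, through it, the stability estimate in Eq.~\eqref{eq: formal stability estimate}.
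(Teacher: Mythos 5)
Your proposal is correct and uses exactly the same ingredients as the paper's own proof --- the Eulerian form $\bv{a}_{a}=\partial_{t}\bv{v}_{a}+\ts{L}_{a}\bv{v}_{a}$, the identity $\tfrac12\Grad(\|\bv{v}_{a}\|^{2})=\transpose{\ts{L}}_{a}\bv{v}_{a}$, and the local mass balance $\partial_{t}\rho_{a}+\Div(\rho_{a}\bv{v}_{a})=0$ --- merely run from the right-hand side to the left rather than the reverse. This is essentially the same argument as in the paper.
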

\begin{proof}
We recall that $\bv{a}_{a}$ is the material acceleration of species $a$. Hence, in Eulerian form we have
\begin{equation*}
\bv{a}_{a} = \partial_{t}\bv{v}_{a} + \ts{L}_{a}\bv{v}_{a} = \partial_{t}\bv{v}_{a} + \Grad{\bv{v}_{a}}[\bv{v}_{a}],
\end{equation*}
where we recall that $\ts{L}_{a} = \Grad{\bv{v}_{a}}$ and we write $\Grad{\square}[\diamond]$ to denote the action of the gradient of a field $\square$ onto the field $\diamond$, assuming that the operation is meaningful for $\square$ and $\diamond$. We also recall the following two identities (cf.\ Ref.~\refcite{GurtinFried_2010_The-Mechanics_0}):
\begin{enumerate}[i]
\item
Given smooth vector fields $\bv{w}$ and $\bv{u}$,
\begin{equation}
\label{eq: appx identity grad dot}
\Grad{(\bv{w}\cdot\bv{u})} = \transpose{(\Grad{\bv{w}})}[\bv{u}] + \transpose{(\Grad{\bv{u}})}[\bv{w}].
\end{equation}
\item Given a smooth scalar field $\phi$ and a smooth vector field $\bv{w}$,
\begin{equation}
\label{eq: appx identity div prod scalar vec}
\Div{(\phi \bv{w})} = \Grad{\phi}[\bv{w}] + \phi \Div{\bv{w}}.
\end{equation}
\end{enumerate}
Finally, we recall that the balance of mass for species $a$ in the absence of chemical reaction is
\begin{equation*}
\partial_{t} \rho_{a} + \Div{(\rho_{a}\bv{v}_{a})} = 0
\quad\Rightarrow\quad
\partial_{t} \rho_{a} = -\Div{(\rho_{a}\bv{v}_{a})}.
\end{equation*}
With the above in mind, we have 
\begin{equation*}
\begin{aligned}
\bv{v}_{a} \cdot \rho_{a} \bv{a}_{a}
&= \rho_{a}(\bv{v}_{a}\cdot\partial_{t}\bv{v}_{a} + \bv{v}_{a} \cdot \ts{L}_{a} \bv{v}_{a})
\\
&= \rho_{a}(\bv{v}_{a}\cdot\partial_{t}\bv{v}_{a} + \transpose{\ts{L}_{a}}\bv{v}_{a} \cdot \bv{v}_{a})
\\
&=
\tfrac{1}{2} \rho_{a}
\left(
\partial_{t} \|\bv{v}_{a}\|^{2}
+
\grad{\|\bv{v}_{a}\|^{2}}[\bv{v}_{a}]
\right)
\\
&=
\partial_{t} \left(\tfrac{1}{2} \rho_{a} \|\bv{v}_{a}\|^{2}\right)
+
\tfrac{1}{2} \rho_{a} \grad{\|\bv{v}_{a}\|^{2}}[\bv{v}_{a}]
-(\partial_{t}\rho_{a}) \tfrac{1}{2}\|\bv{v}_{a}\|^{2}
\\
&=
\partial_{t} \left(\tfrac{1}{2} \rho_{a} \|\bv{v}_{a}\|^{2}\right)
+
\tfrac{1}{2} \rho_{a} \grad{\|\bv{v}_{a}\|^{2}}[\bv{v}_{a}]
+
\left(
\Div{\rho_{a}\bv{v}_{a}}
\right)
\tfrac{1}{2}\|\bv{v}_{a}\|^{2}
\\
&=
\partial_{t} \left(\tfrac{1}{2} \rho_{a} \|\bv{v}_{a}\|^{2}\right)
+
\Div{
\left(
\tfrac{1}{2} \rho_{a} \|\bv{v}_{a}\|^{2} \bv{v}_{a}
\right)
}.
\end{aligned}
\end{equation*}
\end{proof}

\begin{proposition}
Let $\Omega$ be the control volume defined in Section~\ref{Sec: TPE}. Recalling the definitions in Section~\ref{sec: CRF}, we have
\begin{equation}
\label{eq: elastic stresses of solid}
\int_{\Omega} \ts{T}_{\s}^{e} \colondot \ts{L}_{\s}
=
\frac{\d{}}{\d{t}} \int_{\Omega} \psi_{\s} + \int_{\partial\Omega_{\text{\emph{ext}}}} \psi_{\s} \bv{v}_{\s}^{\circd{\emph{e}}} \cdot \bv{n}.
\end{equation}
\end{proposition}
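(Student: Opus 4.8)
Here is how I would prove the statement.

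The plan is to reduce Eq.~\eqref{eq: elastic stresses of solid} to a pointwise balance between the elastic stress power density and the material time derivative of the referential stored energy, rewrite that balance as an Eulerian local conservation law, and then integrate over the fixed region $\Omega$, carefully accounting for the contributions from the moving surface $\mathcal{S}(t)_{\Omega}$.

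The first step is to show that, pointwise in $B(t)$ away from $\mathcal{S}(t)$,
\[
\ts{T}_{\s}^{e}\colondot\ts{L}_{\s} = \bigl(J_{\s}^{-1}\,\partial_{t}\Psi_{\s}\bigr)^{\circd{e}}.
\]
For the left-hand side, comparing the two formulas in Eq.~\eqref{eq: 1PK and Cauchy stress} gives $(\ts{T}_{\s}^{e})^{\circd{s}} = J_{\s}^{-1}\ts{P}_{\s}^{e}\transpose{\ts{F}}_{\s}$; combining this with $\partial_{t}\ts{F}_{\s} = \Grad{\bv{v}_{\s}} = (\ts{L}_{\s})^{\circd{s}}\ts{F}_{\s}$ and cyclicity of the trace yields $\ts{P}_{\s}^{e}\colondot\partial_{t}\ts{F}_{\s} = (\ts{P}_{\s}^{e}\transpose{\ts{F}}_{\s})\colondot(\ts{L}_{\s})^{\circd{s}} = J_{\s}\,(\ts{T}_{\s}^{e}\colondot\ts{L}_{\s})^{\circd{s}}$. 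For the right-hand side, I would use $\Psi_{\s} = \phi_{R_{\s}}\hat{\Psi}_{\s}$ (Eq.~\eqref{eq: solid strain energy}) with $\phi_{R_{\s}}$ time-independent (a consequence of the second of Eqs.~\eqref{eq: balance of mass}, cf.\ Eq.~\eqref{eq: phiRs and phiRf defs}), so that $\partial_{t}\Psi_{\s} = \phi_{R_{\s}}(\partial\hat{\Psi}_{\s}/\partial\ts{C}_{\s})\colondot\partial_{t}\ts{C}_{\s}$, the derivative $\partial\hat{\Psi}_{\s}/\partial\ts{C}_{\s}$ being understood as in Eq.~\eqref{eq: 1PK and Cauchy stress}; then $\partial_{t}\ts{C}_{\s} = 2\,\sym(\transpose{\ts{F}}_{\s}\partial_{t}\ts{F}_{\s})$ together with the symmetry of $\partial\hat{\Psi}_{\s}/\partial\ts{C}_{\s}$ gives $\partial_{t}\Psi_{\s} = 2\phi_{R_{\s}}\bigl(\ts{F}_{\s}\,\partial\hat{\Psi}_{\s}/\partial\ts{C}_{\s}\bigr)\colondot\partial_{t}\ts{F}_{\s} = \ts{P}_{\s}^{e}\colondot\partial_{t}\ts{F}_{\s}$. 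Equating the two computations and pushing the resulting scalar identity forward to $B(t)$ gives the displayed equation.

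The second step converts the right-hand side to Eulerian divergence form. Writing $\psi_{\s} = (J_{\s}^{-1}\Psi_{\s})^{\circd{e}}$ (Eq.~\eqref{eq: eulerian strain energy}) and using the kinematic identity $\partial_{t}J_{\s} = J_{\s}\,(\Div{\bv{v}_{\s}^{\circd{e}}})^{\circd{s}}$, a direct computation gives $\bigl(J_{\s}^{-1}\,\partial_{t}\Psi_{\s}\bigr)^{\circd{e}} = \partial_{t}\psi_{\s} + \Grad{\psi_{\s}}[\bv{v}_{\s}^{\circd{e}}] + \psi_{\s}\,\Div{\bv{v}_{\s}^{\circd{e}}}$, and the identity in Eq.~\eqref{eq: appx identity div prod scalar vec} applied to $\psi_{\s}\bv{v}_{\s}^{\circd{e}}$ collapses the last two terms, so that $\ts{T}_{\s}^{e}\colondot\ts{L}_{\s} = \partial_{t}\psi_{\s} + \Div{(\psi_{\s}\bv{v}_{\s}^{\circd{e}})}$ pointwise away from $\mathcal{S}(t)$. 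The third step is to integrate this over $\Omega$. Since $\Omega$ is time-independent while $\mathcal{S}(t)$ sweeps through it with normal speed $c_{\mathcal{S}(t)}$ (Eq.~\eqref{eq: celerity of St}), the transport theorem gives $\int_{\Omega}\partial_{t}\psi_{\s} = \frac{\d{}}{\d{t}}\int_{\Omega}\psi_{\s} + \int_{\mathcal{S}(t)_{\Omega}}\llbracket\psi_{\s}\rrbracket\,c_{\mathcal{S}(t)}$, while the divergence theorem for a region cut by a surface of discontinuity gives $\int_{\Omega}\Div{(\psi_{\s}\bv{v}_{\s}^{\circd{e}})} = \int_{\partial\Omega_{\text{ext}}}\psi_{\s}\bv{v}_{\s}^{\circd{e}}\cdot\bv{n} - \int_{\mathcal{S}(t)_{\Omega}}\llbracket\psi_{\s}\bv{v}_{\s}^{\circd{e}}\rrbracket\cdot\bv{m}$. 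Because $\bv{v}_{\s}$ is globally continuous (Eq.~\eqref{eq: regularity of chis}), one has $(\bv{v}_{\s}^{\circd{e}})^{\pm}\cdot\bv{m} = c_{\mathcal{S}(t)}$, so $\llbracket\psi_{\s}\bv{v}_{\s}^{\circd{e}}\rrbracket\cdot\bv{m} = \llbracket\psi_{\s}\rrbracket\,c_{\mathcal{S}(t)}$ and the two $\mathcal{S}(t)_{\Omega}$ integrals cancel, yielding Eq.~\eqref{eq: elastic stresses of solid}.

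The main obstacle I anticipate is this last bookkeeping step: keeping the orientation conventions consistent (the side to which $\bv{m}$ points, the split $\Omega=\Omega^{+}(t)\cup\Omega^{-}(t)$, and the sign of the surface speed entering the transport and divergence theorems) so that the interfacial contributions genuinely cancel rather than accumulate. The algebraic manipulations in the first two steps are routine once one records that $\phi_{R_{\s}}$ does not depend on time and that the only discontinuous field entering the surface terms, $\psi_{\s}$, is multiplied there by the continuous field $\bv{v}_{\s}^{\circd{e}}$.
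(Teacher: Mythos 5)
Your proof is correct, and it reaches the identity by a genuinely different organization of the transport bookkeeping than the paper does. The core algebraic step is the same in both arguments: you both show that $\partial_{t}\Psi_{\s} = \ts{P}_{\s}^{e}\colondot\partial_{t}\ts{F}_{\s} = J_{\s}\,(\ts{T}_{\s}^{e}\colondot\ts{L}_{\s})^{\circd{s}}$ using the chain rule, $\partial_{t}\ts{C}_{\s} = 2\sym(\transpose{\ts{F}}_{\s}\partial_{t}\ts{F}_{\s})$, and the symmetry of the stress (your added observation that $\phi_{R_{\s}}$ is time-independent, which licenses differentiating only through $\hat{\Psi}_{\s}$, is correct and worth making explicit). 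Where you diverge is in the integration: the paper pulls the integral back to $\Omega_{\s}$, where the interface preimage $\mathcal{S}_{\s}$ is \emph{fixed} and therefore contributes nothing to the Reynolds transport, at the price of tracking the moving preimage $\partial\Omega_{\s_{\text{ext}}}$ of the fixed external boundary (with normal velocity $-\ts{F}_{\s}^{-1}\bv{v}_{\s}\cdot\bv{n}_{\s}$) and then converting the resulting boundary term via a Nanson-type identity. You instead stay in the current configuration, recast the pointwise identity as the Eulerian conservation law $\ts{T}_{\s}^{e}\colondot\ts{L}_{\s} = \partial_{t}\psi_{\s} + \Div(\psi_{\s}\bv{v}_{\s}^{\circd{e}})$, and then must show that the two interfacial contributions --- one from the transport theorem for the fixed region $\Omega$ cut by the moving surface, one from the divergence theorem --- cancel. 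That cancellation hinges precisely on the coherence of the interface, i.e., $(\bv{v}_{\s}^{\circd{e}})^{\pm}\cdot\bv{m} = c_{\mathcal{S}(t)}$ from Eq.~\eqref{eq: celerity of St} together with the global continuity of $\bv{v}_{\s}$, and your sign conventions (with $\bv{m}$ outward relative to $\Omega^{-}(t)$) are consistent. The paper's route buys freedom from any interface bookkeeping; yours makes visible exactly which kinematic hypothesis (interface convected by a continuous solid motion) guarantees that $\mathcal{S}(t)$ drops out of the elastic power balance, which is arguably more informative.
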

\begin{proof}
    Let $\Omega_{\s}$ be the inverse image of $\Omega$ under the solid's motion $\bv{\chi}_{\s}$. We recall that $\Omega$ is partitioned into two domains by the moving surface $\mathcal{S}(t)$ and that $\partial\Omega_{\text{ext}}$, the external boundary of $\Omega$, is fixed. By contrast, in the solid's reference configuration, the inverse image of $\mathcal{S}(t)$, namely $\mathcal{S}_{\s}$, is fixed, whereas $\partial\Omega_{\s_{\text{ext}}}$, the inverse image of $\partial\Omega_{\text{ext}}$, is a moving surface with normal velocity $-\ts{F}_{\s}^{-1}\bv{v}_{\s}\cdot\bv{n}_{\s}$, where $\bv{n}_{\s}$ is the unit normal orienting $\partial\Omega_{\s_{\text{ext}}}$, outward relative to $\Omega_{\s}$. In fact, let $\hat{\bv{X}}(t)$ be a point on $\partial\Omega_{\s_{\text{ext}}}$. This point is convected by the solid's motion to the \emph{fixed} point $\bv{\chi}_{\s}(\hat{\bv{X}}(t),t)$ on $\partial\Omega_{\text{ext}}$ so that
\begin{equation*}
\frac{\d{}}{\d{t}} \bv{\chi}_{\s}(\hat{\bv{X}}(t),t) = \bv{0}
\quad\Rightarrow\quad
\partial_{t}\hat{\bv{X}} = -\ts{F}_{\s}^{-1} \bv{v}_{\s}.
\end{equation*}
With the above in mind, we have
\begin{equation}
\label{eq: ddt integral of psi}
\begin{aligned}[b]
    \frac{\d{}}{\d{t}} \int_{\Omega} \psi_{\s} &= 
    \frac{\d{}}{\d{t}} \int_{\Omega_{\s}} J_{\s} \psi_{\s}^{\circd{s}}
\\
&=
\int_{\Omega_{\s}} \partial_{t} \Psi_{\s} -
\int_{\partial\Omega_{{\s}_{\text{ext}}}} J_{\s} \psi_{\s}^{\circd{s}} \ts{F}_{\s}^{-1}\bv{v}_{\s} \cdot \bv{n}_{\s}
\\
&=
\int_{\Omega_{\s}} \partial_{t} \Psi_{\s} -
\int_{\partial\Omega_{{\s}_{\text{ext}}}}  \psi_{\s}^{\circd{s}} \bv{v}_{\s} \cdot (J_{\s} \inversetranspose{\ts{F}}_{\s} \bv{n}_{\s}).
\end{aligned}
\end{equation}
Next, making use of the second of Eqs.~\eqref{eq: 1PK and Cauchy stress}, we have
\begin{equation*}
\begin{aligned}
    \partial_{t}\Psi_{\s} &= \frac{\partial\Psi_{\s}}{\partial\ts{C}_{\s}} \colondot \partial_{t}\ts{C}_{\s}
\\
&=
\frac{1}{2} J_{\s} \ts{F}_{\s}^{-1} (\ts{T}_{\s}^{e})^{\circd{s}} \inversetranspose{\ts{F}}_{\s} \colondot \partial_{t}\ts{C}_{\s}.
\end{aligned}
\end{equation*}
Next, we recall that $\ts{C}_{\s} = \transpose{\ts{F}}_{\s}\ts{F}_{\s}$, so that $\partial_{t}\ts{C}_{\s} = 2 \sym{(\transpose{\ts{F}}_{\s}\partial_{t}\ts{F}_{\s})}$. Hence, taking advantage of the symmetry of $\ts{T}_{\s}^{e}$, we can then write
\begin{equation}
\label{eq: power of elastic stress transformations}
\begin{aligned}
    \partial_{t}\Psi_{\s} &= 
J_{\s} \ts{F}_{\s}^{-1} (\ts{T}_{\s}^{e})^{\circd{s}} \inversetranspose{\ts{F}}_{\s} \colondot \transpose{\ts{F}}_{\s}\partial_{t}\ts{F}_{\s}
\\
&=
J_{\s} (\ts{T}_{\s}^{e})^{\circd{s}} \colondot (\partial_{t}\ts{F}_{\s}) \ts{F}_{\s}^{-1}
\\
&= 
J_{\s} (\ts{T}_{\s}^{e})^{\circd{s}} \colondot \ts{L}_{\s}^{\circd{s}}.
\end{aligned}
\end{equation}
Substituting Eq.~\eqref{eq: power of elastic stress transformations} into Eq.~\eqref{eq: ddt integral of psi}, pushing the integration to the domain $\Omega$, and rearranging terms, Eq.~\eqref{eq: elastic stresses of solid} follows.
\end{proof}

\begin{proposition}
\label{claim: TPE}
    Equation~\eqref{eq: TPE Mixture} holds true.
\end{proposition}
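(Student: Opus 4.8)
The plan is to take Eq.~\eqref{eq: TPW inverse step 1}, which holds separately for $a=\s$ and $a=\f$, and add the two instances. On the right-hand side of the sum, the boundary-flux and body-force terms reproduce $\mathcal{W}_{\text{ext}}(\Omega)$ of Eq.~\eqref{eq: Externally supplied power} up to the single extra term $\int_{\partial\Omega_{\text{ext}}}\psi_{\s}\bv{v}_{\s}^{\circd{e}}\cdot\bv{n}$. To dispose of it I would invoke Eq.~\eqref{eq: elastic stresses of solid} to write $\int_{\partial\Omega_{\text{ext}}}\psi_{\s}\bv{v}_{\s}^{\circd{e}}\cdot\bv{n}=\int_{\Omega}\ts{T}_{\s}^{e}\colondot\ts{L}_{\s}-\tfrac{\d{}}{\d{t}}\int_{\Omega}\psi_{\s}$; the time derivative then merges with $\tfrac{\d{}}{\d{t}}\int_{\Omega}(k_{\s}+k_{\f})$ to form $\tfrac{\d{}}{\d{t}}\int_{\Omega}(k_{\s}+k_{\f}+\psi_{\s})$, while the surviving $\int_{\Omega}\ts{T}_{\s}^{e}\colondot\ts{L}_{\s}$ exactly cancels the elastic part of the solid stress power in the summed volume integrand, leaving the bulk integrand equal to $\sum_{a}(\ts{T}_{a}\colondot\ts{L}_{a}-\bv{p}_{a}\cdot\bv{v}_{a}^{\circd{e}})-\ts{T}_{\s}^{e}\colondot\ts{L}_{\s}$ and the surface integrand equal to $\sum_{a}\llbracket\transpose{\ts{T}}_{a}\bv{v}_{a}^{\circd{e}}-k_{a}(\bv{v}_{a}^{\circd{e}}-\bv{v}_{\s}^{\circd{e}})\rrbracket\cdot\bv{m}$.

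Next I would simplify the volume integrand using the constitutive equations~\eqref{eq: Ts and Tf overall}, \eqref{eq: solid viscous Cauchy stress}, \eqref{eq: viscous Cauchy stress of fluid in mixture}, and~\eqref{eq: pf and ff full form}, together with $\bv{p}_{\s}=-\bv{p}_{\f}$ and the fact that each symmetric viscous stress pairs only with $\ts{D}_{a}$. The pressure-dependent contributions collect into $-p(\phi_{\s}\Div\bv{v}_{\s}^{\circd{e}}+\phi_{\f}\Div\bv{v}_{\f}^{\circd{e}}+\Grad{\phi_{\f}}\cdot(\bv{v}_{\f}^{\circd{e}}-\bv{v}_{\s}^{\circd{e}}))$; by the saturation condition~\eqref{eq: saturation condition}, the product-rule identity~\eqref{eq: appx identity div prod scalar vec}, and finally the incompressibility constraint~\eqref{eq: div vvavg constraint expanded}, this equals $-p\,\Div(\bv{v}_{\s}^{\circd{e}}+\bv{v}_{\flt})=0$. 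The remaining viscous and drag terms $\ts{T}_{\s}^{v}\colondot\ts{D}_{\s}+\ts{T}_{\f}^{v}\colondot\ts{D}_{\f}-\bv{f}_{\f}\cdot(\bv{v}_{\f}^{\circd{e}}-\bv{v}_{\s}^{\circd{e}})$ regroup into $2\mu_{\f}\|\ts{D}_{\f}\|^{2}+2\mu_{B}\|\ts{D}_{\f}-\ts{D}_{\s}\|^{2}+\tfrac{\mu_{D}\phi_{\f}^{2}}{\kappa_{\s}}\|\bv{v}_{\f}^{\circd{e}}-\bv{v}_{\s}^{\circd{e}}\|^{2}$, which is precisely $\mathscr{D}$ of Eq.~\eqref{eq: Interior rate of dissipation}.

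The last and most delicate step is the surface integrand. Writing $\ts{T}_{a}\bv{m}\cdot\bv{v}_{a}^{\circd{e}}=\ts{T}_{a}\bv{m}\cdot(\bv{v}_{a}^{\circd{e}}-\bv{v}_{\s}^{\circd{e}})+\ts{T}_{a}\bv{m}\cdot\bv{v}_{\s}^{\circd{e}}$, the contributions proportional to $\bv{v}_{\s}^{\circd{e}}$ — which, because the $a=\s$ relative-velocity and kinetic terms drop out, account for the whole $a=\s$ term — sum to $\bv{v}_{\s}^{\circd{e}}\cdot\llbracket(\ts{T}_{\s}+\ts{T}_{\f})\bv{m}\rrbracket$, using continuity of $\bv{v}_{\s}$ across $\mathcal{S}(t)$; the mixture jump condition~\eqref{eq: Jump condition for mixture} turns this into $\bv{v}_{\s}^{\circd{e}}\cdot d\llbracket\bv{v}_{\f}^{\circd{e}}\rrbracket=d\,\llbracket\bv{v}_{\f}^{\circd{e}}\cdot\bv{v}_{\s}^{\circd{e}}\rrbracket$, using $\llbracket d\rrbracket=0$ from Eq.~\eqref{eq: d def}. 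There remains $\llbracket\ts{T}_{\f}\bv{m}\cdot(\bv{v}_{\f}^{\circd{e}}-\bv{v}_{\s}^{\circd{e}})\rrbracket-\llbracket k_{\f}(\bv{v}_{\f}^{\circd{e}}-\bv{v}_{\s}^{\circd{e}})\cdot\bv{m}\rrbracket$. Since $k_{\f}=\tfrac12\rho_{\f}\|\bv{v}_{\f}^{\circd{e}}\|^{2}$ and $d=\rho_{\f}(\bv{v}_{\f}^{\circd{e}}-\bv{v}_{\s}^{\circd{e}})\cdot\bv{m}$, one has $k_{\f}(\bv{v}_{\f}^{\circd{e}}-\bv{v}_{\s}^{\circd{e}})\cdot\bv{m}=\tfrac12 d\,\|\bv{v}_{\f}^{\circd{e}}\|^{2}$, so that term contributes $-\tfrac12 d\,\llbracket\|\bv{v}_{\f}^{\circd{e}}\|^{2}\rrbracket$; adding $d\,\llbracket\bv{v}_{\f}^{\circd{e}}\cdot\bv{v}_{\s}^{\circd{e}}\rrbracket$ and using $\llbracket\|\bv{v}_{\s}^{\circd{e}}\|^{2}\rrbracket=0$ gives $-\tfrac12 d\,\llbracket\|\bv{v}_{\f}^{\circd{e}}-\bv{v}_{\s}^{\circd{e}}\|^{2}\rrbracket$. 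Hence the surface integrand equals $\mathscr{I}$ of Eq.~\eqref{eq: power expended by the interface}, and collecting the three reductions produces Eq.~\eqref{eq: TPE Mixture}. The parts demanding the most care are the jump-versus-average bookkeeping on $\mathcal{S}(t)$ — consistently exploiting $\llbracket\bv{v}_{\s}\rrbracket=\bv{0}$ and $\llbracket d\rrbracket=0$ — and checking that it is exactly the incompressibility constraint~\eqref{eq: div vvavg constraint expanded} that annihilates the pressure terms in the bulk.
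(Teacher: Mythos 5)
Your proposal is correct and follows essentially the same route as the paper's proof: sum the two species' power identities from Eq.~\eqref{eq: TPW inverse step 1}, cancel the pressure contributions via the saturation condition and the constraint in Eq.~\eqref{eq: div vvavg constraint expanded}, absorb the elastic stress power through Eq.~\eqref{eq: elastic stresses of solid}, and reduce the interface term to $\mathscr{I}$ using the continuity of $\bv{v}_{\s}$ and $d$ together with the mixture jump condition~\eqref{eq: Jump condition for mixture}. The only differences are cosmetic bookkeeping (you collect the pressure terms directly into $-p\,\Div(\bv{v}_{\s}^{\circd{e}}+\bv{v}_{\flt})$, whereas the paper cancels them pairwise between Eqs.~\eqref{eq: p term} and~\eqref{eq: stress power interior}).
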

\begin{proof}
Summing Eqs.~\eqref{eq: TPW inverse step 1} for $a = \f$ and $a = \s$, we have
\begin{equation}
\label{eq: sum of  TPW inverse step 1}
\begin{multlined}
\frac{\d{}}{\d{t}} \int_{\Omega}(k_{\f} + k_{\s})
+
\int_{\Omega} (\ts{T}_{\f}\colondot \ts{L}_{\f} + \ts{T}_{\s}\colondot \ts{L}_{\s} - \bv{p}_{\f} \cdot (\bv{v}_{\f}^{\circd{e}} - \bv{v}_{\s}^{\circd{e}}))
\\
= -\int_{\partial\Omega_{\text{ext}}} (k_{\f} \bv{v}_{\f}^{\circd{e}} + k_{\s} \bv{v}_{\s}^{\circd{e}} ) \cdot \bv{n}
+
\int_{\partial\Omega_{\text{ext}}}(
\ts{T}_{\f}\bv{n} \cdot \bv{v}_{\f}^{\circd{e}}
+
\ts{T}_{\s}\bv{n} \cdot \bv{v}_{\s}^{\circd{e}}
)
\\
+
\int_{\mathcal{S}(t)}
\llbracket
k_{\f} (\bv{v}_{\f}^{\circd{e}}-\bv{v}_{\s}^{\circd{e}}) \cdot \bv{m}
- 
\ts{T}_{\f} \bv{m} \cdot \bv{v}_{\f}^{\circd{e}}
- 
\ts{T}_{\s} \bv{m} \cdot \bv{v}_{\s}^{\circd{e}}
\rrbracket
\\
+
\int_{\Omega} (\bv{b}_{\f} \cdot \bv{v}_{\f}^{\circd{e}} + \bv{b}_{\s} \cdot \bv{v}_{\s}^{\circd{e}}),
\end{multlined}
\end{equation}
where we have used the property that $\bv{p}_{\s} = -\bv{p}_{\f}$. Using Eqs.~\eqref{eq: pf and ff full form}, for the last term on the left-hand side of Eq.~\eqref{eq: sum of  TPW inverse step 1}, we have
\begin{equation}
\label{eq: p term}
\begin{aligned}
-\bv{p}_{\f} \cdot (\bv{v}_{\f}^{\circd{e}} - \bv{v}_{\s}^{\circd{e}}) &= -p \Grad{\phi_{\f}} \cdot (\bv{v}_{\f}^{\circd{e}} - \bv{v}_{\s}^{\circd{e}}) - \bv{f}_{\f} \cdot (\bv{v}_{\f}^{\circd{e}} - \bv{v}_{\s}^{\circd{e}})
\\
&=
-p \Div{[\phi_{\f} (\bv{v}_{\f}^{\circd{e}} - \bv{v}_{\s}^{\circd{e}})]}
+ p \phi_{\f} \Div{(\bv{v}_{\f}^{\circd{e}} - \bv{v}_{\s}^{\circd{e}})}
\\
&\qquad-\bv{f}_{\f} \cdot (\bv{v}_{\f}^{\circd{e}} - \bv{v}_{\s}^{\circd{e}}).
\end{aligned}
\end{equation}
In view of Eqs.~\eqref{eq: 1PK and Cauchy stress}--\eqref{eq: Ts and Tf overall}, and recalling that the Cauchy stresses in these equations are symmetric, we also have
\begin{equation}
\label{eq: stress power interior}
\begin{aligned}
\ts{T}_{\f}\colondot\ts{L}_{\f}
+
\ts{T}_{\s}\colondot\ts{L}_{\s}
&=
-p \phi_{\f} \, \underbrace{\ts{I} \colondot \ts{L}_{\f}}_{\Div{\,\bv{v}_{\f}^{\circd{e}}}}
+ 2 \mu_{\f} \ts{D}_{\f}\colondot\ts{D}_{\f} + 2 \mu_{B} (\ts{D}_{\f} - \ts{D}_{\s}) \colondot \ts{D}_{\f}
\\
&\qquad
-p \phi_{\s} \, \underbrace{\ts{I} \colondot \ts{L}_{\s}}_{\Div{\,\bv{v}_{\s}^{\circd{e}}}}
+ \ts{T}_{\s}^{e} \colondot \ts{L}_{\s}
- 2 \mu_{B}(\ts{D}_{\f} - \ts{D}_{\s}) \colondot \ts{D}_{\s}
\\
&=
-p \phi_{\f} \Div{(\bv{v}_{\f}^{\circd{e}} - \bv{v}_{\s}^{\circd{e}})} - p \Div{\bv{v}_{\s}^{\circd{e}}}
\\
&\qquad + 2 \mu_{\f}\|\ts{D}_{\f}\|^{2}
+ 2 \mu_{B} \|\ts{D}_{\f} - \ts{D}_{\s}\|^{2}
\\
&\qquad + \ts{T}_{\s}^{e} \colondot \ts{L}_{\s}.
\end{aligned}
\end{equation}
Summing Eqs.~\eqref{eq: p term} and~\eqref{eq: stress power interior}, and using Eqs.~\eqref{eq: saturation condition} along with the second of Eqs.~\eqref{eq: div vvavg constraint expanded}, we then have that
\begin{equation}
\label{eq: interior dissipation terms}
\ts{T}_{\f}\colondot\ts{L}_{\f}
+
\ts{T}_{\s}\colondot\ts{L}_{\s} -\bv{p}_{\f} \cdot (\bv{v}_{\f}^{\circd{e}} - \bv{v}_{\s}^{\circd{e}}) = \ts{T}_{\s}^{e} \colondot \ts{L}_{\s} + \mathscr{D},
\end{equation}
where we have used Eqs.~\eqref{eq: pf and ff full form} and the definition of $\mathscr{D}$ in Eq.~\eqref{eq: Interior rate of dissipation}. Integrating Eq.~\eqref{eq: interior dissipation terms} over $\Omega$, and using Eq.~\eqref{eq: elastic stresses of solid}, we can now write
\begin{equation}
\label{eq: interior dissipation terms expanded}
\int_{\Omega}(
\ts{T}_{\f}\colondot\ts{L}_{\f}
+
\ts{T}_{\s}\colondot\ts{L}_{\s} -\bv{p}_{\f} \cdot (\bv{v}_{\f}^{\circd{e}} - \bv{v}_{\s}^{\circd{e}}) ) = 
\frac{\d{}}{\d{t}} \int_{\Omega} \psi_{\s} + \int_{\partial\Omega_{\text{ext}}} \psi_{\s} \bv{v}_{\s}^{\circd{e}} \cdot \bv{n}.
+\int_{\Omega} \mathscr{D}.
\end{equation}
Using the definition in Eq.~\eqref{eq: Externally supplied power}, Eq.~\eqref{eq: sum of  TPW inverse step 1} can now be rewritten as
\begin{multline}
\label{eq: TPE almost done}    
\mathcal{W}_{\text{ext}}(\Omega)
=
\frac{\d{}}{\d{t}} \int_{\Omega} (k_{\s} + k_{\f} + \psi_{\s}) + \int_{\Omega} \mathscr{D}
\\
-
\int_{\mathcal{S}(t)}
\llbracket
k_{\f} (\bv{v}_{\f}^{\circd{e}}-\bv{v}_{\s}^{\circd{e}}) \cdot \bv{m}
- 
\ts{T}_{\f} \bv{m} \cdot \bv{v}_{\f}^{\circd{e}}
- 
\ts{T}_{\s} \bv{m} \cdot \bv{v}_{\s}^{\circd{e}}
\rrbracket.
\end{multline}
We observe that, using the definition in Eq.~\eqref{eq: d def}, and recalling that both $d$ and $\bv{v}_{\s}$ are continuous across $\mathcal{S}(t)$,
\begin{equation}
\label{eq: jump kf}
\begin{aligned}[b]
\llbracket
k_{\f} (\bv{v}_{\f}^{\circd{e}}-\bv{v}_{\s}^{\circd{e}}) \cdot \bv{m} 
\rrbracket
&=
\llbracket
\tfrac{1}{2} d \|\bv{v}_{\f}^{\circd{e}}\|^{2}
\rrbracket
\\
&=
d
\llbracket
\tfrac{1}{2} \|\bv{v}_{\f}^{\circd{e}}\|^{2}
\rrbracket
\\
&=
d
\llbracket
\tfrac{1}{2}
\|\bv{v}_{\f}^{\circd{e}} - \bv{v}_{\s}^{\circd{e}}\|^{2}
+
\bv{v}_{\f}^{\circd{e}} \cdot \bv{v}_{\s}^{\circd{e}}
-
\tfrac{1}{2}\|\bv{v}_{\s}^{\circd{e}}\|^{2}
\rrbracket
\\
&=
d
\llbracket
\tfrac{1}{2}
\|\bv{v}_{\f}^{\circd{e}} - \bv{v}_{\s}^{\circd{e}}\|^{2}
+
\bv{v}_{\f}^{\circd{e}} \cdot \bv{v}_{\s}^{\circd{e}}
\rrbracket.
\end{aligned}
\end{equation}
Next, again recalling that $\bv{v}_{\s}^{\circd{e}}$ is continuous across $\mathcal{S}(t)$, using Eq.~\eqref{eq: Jump condition for mixture} we have that
\begin{equation}
\label{eq: power jump}
\llbracket
\ts{T}_{\s} \bv{m} \cdot \bv{v}_{\s}^{\circd{e}}
\rrbracket
=
d \llbracket\bv{v}_{\f}^{\circd{e}} \cdot \bv{v}_{\s}^{\circd{e}} \rrbracket
-
\llbracket
\ts{T}_{\f}\bv{m} \cdot \bv{v}_{\s}^{\circd{e}}
\rrbracket.
\end{equation}
Substituting Eqs.~\eqref{eq: jump kf} and~\eqref{eq: power jump} into Eq.~\eqref{eq: TPE almost done}, and recalling the definition in Eq.~\eqref{eq: power expended by the interface}, the claim follows.
\end{proof}

\begin{proposition}
    \label{claim: variation of relative velocity}
\begin{equation}
\label{eq: variation of relative velocity}
\frac{\partial(\bv{v}_{\f}^{\circd{s}} - \bv{v}_{\s})}{\partial \dot{q}} [\delta q] = 
-\ts{F}_{\s}\ts{F}_{\frs}^{-1} \, \delta \bv{\chi}_{\frs}.
\end{equation}
\end{proposition}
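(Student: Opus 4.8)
The plan is to reduce the statement to the last of Eqs.~\eqref{eq: vf rel vs} together with the observation that, once the abstract motion is taken to be the pair $q = \{\bv{\chi}_{\s},\bv{\chi}_{\frs}\}$, the deformation gradients $\ts{F}_{\s} = \Grad\bv{\chi}_{\s}$ and $\ts{F}_{\frs} = \Grad\bv{\chi}_{\frs}$ are determined by $q$ alone, while $\bv{v}_{\frs} = \partial_{t}\bv{\chi}_{\frs}$ is exactly the component of $\dot{q} = \partial_{t}q$ conjugate to $\bv{\chi}_{\frs}$. This identity is the piece needed to handle the variations of $\mathscr{K}$, $\mathscr{C}$, and $\mathscr{R}$ with respect to $\bv{\chi}_{\frs}$ in the proof of Eq.~\eqref{eq: Hamiltons principle final global}, since all of those functionals involve $\bv{v}_{\f}^{\circd{s}} - \bv{v}_{\s}$ (or $\bv{v}_{\flt}$, which is $\phi_{\f}$ times it).

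First I would recall from the last of Eqs.~\eqref{eq: vf rel vs} that $\bv{v}_{\f}^{\circd{s}} - \bv{v}_{\s} = -\ts{F}_{\s}\ts{F}_{\frs}^{-1}\bv{v}_{\frs}$. Read as a function of $(q,\dot{q})$, the right-hand side is linear in $\dot{q}$, is independent of the $\bv{\chi}_{\s}$-component of $\dot{q}$, and depends on the $\bv{\chi}_{\frs}$-component of $\dot{q}$ only through left multiplication by the configuration-dependent tensor $-\ts{F}_{\s}\ts{F}_{\frs}^{-1}$. Differentiating with respect to $\dot{q}$ at fixed $q$ therefore does not touch $\ts{F}_{\s}$ or $\ts{F}_{\frs}$ and returns the bounded linear operator that annihilates the $\bv{\chi}_{\s}$-slot of its argument and sends the $\bv{\chi}_{\frs}$-slot $\bv{w}$ to $-\ts{F}_{\s}\ts{F}_{\frs}^{-1}\bv{w}$. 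Evaluating this operator on $\delta q = \{\delta\bv{\chi}_{\s},\delta\bv{\chi}_{\frs}\}$ gives $-\ts{F}_{\s}\ts{F}_{\frs}^{-1}\delta\bv{\chi}_{\frs}$, which is precisely Eq.~\eqref{eq: variation of relative velocity}.

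There is no analytic obstacle here; the only point requiring care is the bookkeeping of the variational derivative. One must record explicitly that $\ts{F}_{\s}$ and $\ts{F}_{\frs}$ carry no dependence on $\dot{q}$, so that no product-rule term in $\delta\ts{F}_{\s}$ or $\delta\ts{F}_{\frs}$ is produced, and that in the pairing $\tfrac{\partial(\cdot)}{\partial\dot{q}}[\delta q]$ it is $\delta\bv{\chi}_{\frs}$ that occupies the slot held by $\bv{v}_{\frs}$. Since $\bv{v}_{\f}^{\circd{s}}$, $\bv{v}_{\s}$, $\ts{F}_{\s}$, $\ts{F}_{\frs}$, and $\bv{v}_{\frs}$ are all defined over $B_{\s}$, no pull-back between the reference configurations of the two phases intervenes in the computation.
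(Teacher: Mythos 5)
Your proof is correct and is essentially the paper's own argument: the paper likewise derives the identity directly from the last of Eqs.~\eqref{eq: vf rel vs} together with the definition of $q(t)$ in Eq.~\eqref{eq: hamiltons motion pair}, your version merely spelling out the linearity in $\bv{v}_{\frs}$ and the $\dot{q}$-independence of $\ts{F}_{\s}\ts{F}_{\frs}^{-1}$ that the paper leaves implicit.
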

\begin{proof}
    Equation~\eqref{eq: variation of relative velocity} follows directly from Eq.~\eqref{eq: vf rel vs} and the definition of the list $q(t)$ in Eq.~\eqref{eq: hamiltons motion pair}.
\end{proof}

\begin{proposition}
\label{claim: hamiltons principle}
    Under the assumptions stated in the paper, Eq.~\eqref{eq: Hamiltons principle final global} holds.
\end{proposition}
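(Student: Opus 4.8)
The proposition is a calculus-of-variations computation: substitute the explicit functionals of Eqs.~\eqref{eq: K definition}--\eqref{eq: Surface Dissipation potential} into the weak statement Eq.~\eqref{eq: Hamiltons principle paradigm}, take the first variation with respect to the configuration $q=\{\bv{\chi}_{\s},\bv{\chi}_{\frs}\}$, and reorganize. The multiplier fields $p$ and $\ip$ are held fixed here (their variations merely reproduce the two constraints), so only the $\delta\bv{\chi}_{\s}$ and $\delta\bv{\chi}_{\frs}$ contributions must be tracked, and the argument parallels that of dell'Isola \etal{}\cite{dellIsola2009Boundary-Condit-0}, the only structural novelties being the incompressibility term $\mathscr{C}$ and the surface term $\mathscr{C}_{\flt}$.

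First I would rewrite every integrand over $B_{\s}$ and over $\mathcal{S}_{\s}$ purely in terms of $\bv{\chi}_{\s},\bv{\chi}_{\frs}$ and their space--time gradients, using Eqs.~\eqref{eq: vf rel vs}, \eqref{eq: uf from relative motion}, and \eqref{eq: phiRs and phiRf defs}; in particular $(k_{\f})^{\circd{s}}J_{\s}=\tfrac12\rho_{\f}^{*}\phi_{R_{\f}}^{\circd{s}}J_{\frs}\|\bv{v}_{\s}-\ts{F}_{\s}\ts{F}_{\frs}^{-1}\bv{v}_{\frs}\|^{2}$, $\Psi_{\s}=\phi_{R_{\s}}\hat{\Psi}_{\s}(\overline{\ts{C}}_{\s},\bv{X}_{\s})$ depends only on $\bv{\chi}_{\s}$, the body-force potential uses $\bv{u}_{\f}^{\circd{s}}=\bv{\chi}_{\s}-\bv{\chi}_{\frs}$, and the factors $\phi_{R_{\s}},\phi_{R_{\f}}$ are time-independent. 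Then I would compute the variations term by term. Varying $\mathscr{K}$ requires integration by parts in time, legitimate because $\delta q$ has compact support in $[0,T]$; it yields the inertial terms $\rho_{\s}\bv{a}_{\s}^{\circd{e}}$ and $\rho_{\f}\bv{a}_{\f}^{\circd{e}}$ in the bulk, and, since $\ts{F}_{\frs},J_{\frs},\phi_{R_{\f}}^{\circd{s}}$ also vary and carry $\Grad{\delta\bv{\chi}_{\frs}}$, a spatial integration by parts of those pieces yields the convective combination $k_{\f}\ts{I}-\rho_{\f}\bv{v}_{\f}^{\circd{e}}\otimes(\bv{v}_{\f}^{\circd{e}}-\bv{v}_{\s}^{\circd{e}})$ appearing on the fourth line of Eq.~\eqref{eq: Hamiltons principle final global}. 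Varying $\mathscr{U}$, differentiating $\hat{\Psi}_{\s}$ through $\ts{C}_{\s}$ and integrating by parts in space gives $\ts{P}_{\s}^{e}$, hence $-\Div{\ts{T}_{\s}^{e}}$ in the bulk and $\llbracket\ts{T}_{\s}^{e}\rrbracket\bv{m}$ on $\mathcal{S}(t)$, while the body-force terms return $\bv{b}_{\s},\bv{b}_{\f}$. Varying $\mathscr{C}$ and $\mathscr{C}_{\flt}$, after moving derivatives off $\delta\bv{\chi}$, delivers the pressure contributions $-\phi_{\s}p\ts{I},-\phi_{\f}p\ts{I},p\Grad{\phi_{\f}}$ in the bulk and the $\phi_{\f}p\ts{I}$ and $\phi_{\f}\ip\ts{I}$ interface terms. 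Finally, differentiating $\mathscr{R}$ with respect to $\dot q$ as in Eq.~\eqref{eq: Hamiltons principle paradigm}, the bulk part $\mathscr{D}$ produces the viscous stresses $\ts{T}_{\s}^{v},\ts{T}_{\f}^{v}$ and the Darcy drag $\bv{f}_{\f}$ (with their bulk divergences and interface jumps), while the surface part $\mathscr{D}_{\mathcal{S}(t)}=\tfrac12\mu_{\mathcal{S}}\|\llbracket\bv{v}_{\flt}\rrbracket\|^{2}$ produces the $\tfrac12\phi_{\f}\mu_{\mathcal{S}}\llbracket\bv{v}_{\flt}\rrbracket\otimes\bv{m}$ surface term.

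Next I would separate the coefficients of $\delta\bv{\chi}_{\s}$ from those of $\ts{F}_{\s}\ts{F}_{\frs}^{-1}\delta\bv{\chi}_{\frs}$, use the continuity of $\bv{\chi}_{\s}$ across $\mathcal{S}_{\s}$ (Eqs.~\eqref{eq: regularity of chis}) to consolidate the solid's interface terms, and use the Piola identity Eq.~\eqref{eq: normal to St} to convert referential surface integrands carrying $J_{\s}\inversetranspose{\ts{F}}_{\s}\bv{m}_{\s}$ into current-configuration quantities weighted by $\bv{m}$. Substituting the constitutive splits $\ts{T}_{a}=-\phi_{a}p\ts{I}+\ts{T}_{a}^{e}+\ts{T}_{a}^{v}$ (the fluid carrying no elastic stress) and $\bv{p}_{\f}=p\Grad{\phi_{\f}}+\bv{f}_{\f}$, the bulk and interface groups assemble exactly into the five lines of Eq.~\eqref{eq: Hamiltons principle final global}. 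Useful consistency checks are provided by the remarks that follow in the text: the two bulk groups must coincide with Eqs.~(28)--(29) of Ref.~\refcite{dellIsola2009Boundary-Condit-0}, the mixture jump with Eq.~\eqref{eq: Jump condition for mixture}, and the fluid jump with Eq.~(31) therein.

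The step I expect to be the main obstacle is the bookkeeping of the interface integrals generated by the spatial integration by parts. Two features make it delicate. First, $\bv{\chi}_{\frs}$ and its variation may be discontinuous across $\mathcal{S}_{\s}$ whereas $\bv{\chi}_{\s}$ is not, so the surface contributions coming from $\delta\ts{F}_{\frs}$, $\delta J_{\frs}$ and the variation of $\ts{L}_{\f}$ must be kept as jumps $\llbracket\cdot\rrbracket$ of the \emph{whole} product — including $\delta\bv{\chi}_{\frs}$, as reflected by the outermost brackets on the fourth and fifth lines of Eq.~\eqref{eq: Hamiltons principle final global} — and not split term by term. Second, the viscous contribution is algebraically heavy because $\ts{L}_{\f}=\Grad{(\bv{v}_{\f}^{\circd{e}})}$ inside $\mathscr{D}$ must first be re-expressed through $\bv{\chi}_{\s},\bv{\chi}_{\frs}$ via the last of Eqs.~\eqref{eq: vf rel vs} before it can be varied, so that its variation mixes $\Grad{\delta\bv{\chi}_{\s}}$, $\Grad{\delta\bv{\chi}_{\frs}}$ and their products with $\ts{F}_{\s},\ts{F}_{\frs}$ and must be untangled before the pull-back to $\mathcal{S}(t)$. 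The remaining manipulations — the time integration by parts, the Piola transforms, and the final cancellation of the invertible factors $\ts{F}_{\s},\ts{F}_{\frs}$ when passing from Eq.~\eqref{eq: Hamiltons principle final global} to Eqs.~\eqref{eq: balance of momentum mixture from variation}--\eqref{eq: balance of momentum jump fluid from variation} — are systematic.
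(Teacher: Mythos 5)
Your proposal is correct and follows essentially the same route as the paper's proof in Appendix~B: term-by-term variation of $\mathscr{K}$, $\mathscr{U}$, $\mathscr{C}$, $\mathscr{C}_{\flt}$, and $\mathscr{R}$ (the latter three treated in rate form, i.e.\ varied through $\dot q$), integration by parts in time and space, Piola transforms of the referential surface terms, and assembly into Eq.~\eqref{eq: Hamiltons principle final global} after substituting the constitutive splits, with the jump brackets kept around the whole products involving $\delta\bv{\chi}_{\frs}$. The only cosmetic difference is that the paper imports the expression for $\delta\mathscr{K}$ directly from Ref.~\refcite{dellIsola2009Boundary-Condit-0} rather than rederiving it as you sketch.
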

\begin{proof}
The derivation presented herein makes repeated use of the standard identities such as those in~\eqref{eq: appx identity grad dot} and~\eqref{eq: appx identity div prod scalar vec}.

    The expression for the variation of the kinetic energy in this paper can be borrowed directly from Ref.~\refcite{dellIsola2009Boundary-Condit-0}. Adjusting for the differences in notation, this gives:
\begin{equation}
\label{eq: variation of K}
\begin{aligned}[b]
\int_{0}^{T}
\delta \mathscr{K} &= \int_{0}^{T}
\biggl\{
\int_{B_{\s}} -J_{\s}
\bigl(
\rho_{\s}^{\circd{s}} \bv{a}_{\s}
+
\rho_{\f}^{\circd{s}} \bv{a}_{\f}^{\circd{s}}
\bigr)
\cdot \delta \bv{\chi}_{\s} 
\\
&\qquad\qquad
-\int_{\mathcal{S}_{\s}}
\underbrace{%
\llbracket
\rho_{\f}^{\circd{s}} \bv{v}_{\f}^{\circd{s}} \otimes (\bv{v}_{\f}^{\circd{s}} - \bv{v}_{\s})
\rrbracket
}_{\llbracket
\rho_{\f}^{\circd{s}} (\bv{v}_{\f}^{\circd{s}} - \bv{v}_{\s}) \otimes (\bv{v}_{\f}^{\circd{s}} - \bv{v}_{\s})
\rrbracket}
J_{\s} \inversetranspose{\ts{F}}_{\s} \bv{m}_{\s} \cdot \delta\bv{\chi}_{\s}
\\
&\qquad\qquad
+
\int_{B_{\s}} J_{\s} \rho_{\f}^{\circd{s}} \bv{a}_{\f}^{\circd{s}} \cdot \ts{F}_{\s} \ts{F}_{\frs}^{-1} \delta \bv{\chi}_{\frs}
\\
&\qquad\qquad
-
\int_{\mathcal{S}_{\s}}
\Bigl\llbracket
\bigl(
\underbrace{%
\tfrac{1}{2} \rho_{\f}^{\circd{s}} \|\bv{v}_{\f}^{\circd{s}}\|^{2}}_{k_{\f}^{\circd{s}}} \, \ts{I}
\\
&\qquad\qquad\qquad\qquad
-
\rho_{\f}^{\circd{s}} \bv{v}_{\f}^{\circd{s}} \otimes (\bv{v}_{\f}^{\circd{s}} - \bv{v}_{\s})
\bigr) J_{\s} \inversetranspose{\ts{F}}_{\s} \bv{m}_{\s}
\cdot
\ts{F}_{\s}\ts{F}_{\frs}^{-1} \delta\bv{\chi}_{\frs}
\Bigr\rrbracket
\biggr\},
\end{aligned}
\end{equation}
where we have used the continuity of $\bv{v}_{\s}$ across $\mathcal{S}_{\s}$ to rewrite the term under bracket on the second line of the right-hand side.

To provide the form of the potential energy variation, we start with recalling that $\ts{C}_{\s} = \transpose{\ts{F}}_{\s} \ts{F}_{\s}$, with $\ts{F}_{\s} = \Grad{\bv{\chi}}_{\s}$. Hence, we have
\begin{equation}
\label{eq: variation of strain energy}
\delta\Psi_{\s} = \frac{\partial\Psi_{\s}}{\partial\ts{C}_{\s}} \colondot \delta \ts{C}_{\s}
~\Rightarrow~
\delta\Psi_{\s} = 2 \ts{F}_{\s} \frac{\partial\Psi_{\s}}{\partial\ts{C}_{\s}} \colondot \Grad{\delta\bv{\chi}_{\s}}
~\Rightarrow~
\delta\Psi_{\s} = \ts{P}_{\s} \colondot \Grad{\delta\bv{\chi}_{\s}},
\end{equation}
where we have taken advantage of the symmetry of $\ts{C}_{\s}$.

Recalling to Eqs.~\eqref{eq: us and uf defs},~\eqref{eq: uf from relative motion}, and~\eqref{eq: U definition}, and recalling that the terms $J_{\s}\bv{b}_{\s}$ and $J_{\s}\bv{b}_{\f}$ are treated as externally controlled entities at fixed $\bv{X}_{\s}$ and $\bv{X}_{\f}$, respectively, the variation of the potential energy is as follows:
\begin{equation}
\label{eq: Variation of U}
\begin{aligned}[b]
    \delta U &=
    \int_{B_{\s}}
    \bigl(
    \ts{P}_{\s}^{e} \colondot \Grad{\delta\bv{\chi}}_{\s}
    - J_{\s} (\bv{b}_{\s} + \bv{b}_{\f})^{\circd{s}} \cdot 
    \delta\bv{\chi}_{\s}
    + J_{\s} \bv{b}_{\f}^{\circd{s}} \cdot \ts{F}_{\s} \ts{F}_{\frs}^{-1} \delta\bv{\chi}_{\frs}
    \bigr)
\\
&= \int_{B_{\s}} 
\bigl(
\Div{[\transpose{(\ts{P}_{\s}^{e})} \delta\bv{\chi}_{\s}}] - 
(\Div{\ts{P}_{\s}^{e}} 
+ J_{\s} (\bv{b}_{\s} + \bv{b}_{\f})^{\circd{s}}) \cdot \delta\bv{\chi}_{\s}
    + J_{\s} \bv{b}_{\f}^{\circd{s}} \cdot \ts{F}_{\s} \ts{F}_{\frs}^{-1} \delta\bv{\chi}_{\frs}
    \bigr)
\\
&=
-\int_{\mathcal{S}_{\s}}
\llbracket
\ts{T}_{\s}^{e} J_{\s} \inversetranspose{\ts{F}}_{\s} \bv{m}_{\s} 
\cdot \delta\bv{\chi}_{\s} \rrbracket
\\
&\qquad\quad
-\int_{B_{\s}}
\bigl(
J_{\s}(\Div{\ts{T}_{\s}^{e}} 
+ \bv{b}_{\s} + \bv{b}_{\f})^{\circd{s}} \cdot \delta\bv{\chi}_{\s}
    - J_{\s} \bv{b}_{\f}^{\circd{s}} \cdot \ts{F}_{\s} \ts{F}_{\frs}^{-1} \delta\bv{\chi}_{\frs}
    \bigr).
\end{aligned}
\end{equation}

As $\mathscr{C}$ has been introduced in rate form, its variation is computed as $\delta\mathscr{C} = (\partial\mathscr{C}/\partial\dot{q})[\delta q]$. This gives 
\begin{equation}
\label{eq: variation of C}
\begin{aligned}[b]
    \delta \mathscr{C} &=
    \int_{B_{\s}}
    p^{\circd{s}} J_{\s} (\Div{(\delta\bv{\chi}_{\s}^{\circd{e}}}
    - \phi_{\f} (\ts{F}_{\s}\ts{F}_{\frs}^{-1} \delta\bv{\chi}_{\frs})^{\circd{e}}
    ))^{\circd{s}}
    \\
    &=
     \int_{B_{\s}}
     \bigl(
    \Div{(p^{\circd{s}} J_{\s} \ts{F}_{\s}^{-1}(\delta\bv{\chi}_{\s}
    - \phi_{\f}^{\circd{s}} \ts{F}_{\s}\ts{F}_{\frs}^{-1} \delta\bv{\chi}_{\frs}))}
    \\
    &\qquad\quad
    -\Div{(J_{\s}p^{\circd{s}}\inversetranspose{\ts{F}}_{\s})} \cdot (\delta \bv{\chi}_{\s}
     - \phi_{\f}^{\circd{s}} \ts{F}_{\s}\ts{F}_{\frs}^{-1} \delta\bv{\chi}_{\frs}
    )
    \bigr)
    \\
    &=
    -
    \int_{\mathcal{S}_{\s}}
    \llbracket
    (p^{\circd{s}} J_{\s} \ts{F}_{\s}^{-1}(\delta\bv{\chi}_{\s}
    - \phi_{\f}^{\circd{s}} \ts{F}_{\s}\ts{F}_{\frs}^{-1} \delta\bv{\chi}_{\frs}))
    \rrbracket \cdot \bv{m}_{\s}
     \\
    &\qquad\quad
    -
    \int_{B_{\s}} 
   \Div{(J_{\s}p^{\circd{s}}\inversetranspose{\ts{F}}_{\s})} \cdot \delta \bv{\chi}_{\s}
   \\
   &\qquad\quad
     + 
     \int_{B_{\s}}
     \underbrace{%
     \phi_{\f}^{\circd{s}} \Div{(J_{\s}p^{\circd{s}}\inversetranspose{\ts{F}}_{\s})}     }_{\Div{\,(J_{\s}\phi_{\f}^{\circd{s}}p^{\circd{s}}\inversetranspose{\ts{F}}_{\s})} - J_{\s}p^{\circd{s}}\inversetranspose{\ts{F}}_{\s}\Grad{\phi_{\f}^{\circd{s}}}}
     \cdot
     \ts{F}_{\s}\ts{F}_{\frs}^{-1} \delta\bv{\chi}_{\frs}
.
\end{aligned}
\end{equation}

Similarly, we have
\begin{equation}
\label{eq: variation of surface constraint}
\begin{aligned}[b]
    \delta\mathscr{C}_{\flt} &=
    \int_{\mathcal{S}_{\s}}
    -\ip^{\circd{s}}
    \llbracket
    \phi_{\f}^{\circd{s}}
    \ts{F}_{\s}\ts{F}_{\frs}^{-1} \delta\bv{\chi}_{\frs}
    \cdot
    J_{\s} \inversetranspose{\ts{F}}_{\s} \bv{m}_{\s}
    \rrbracket.
\end{aligned}
\end{equation}

The variation of $\mathscr{R}$, again computed by taking into account that the dissipation is written in rate form, is
\begin{equation}
\label{eq: variation of R step 1}
\begin{aligned}[b]
\delta \mathscr{R} &= 
\int_{B_{\s}}
J_{\s}
\Bigl(
2 \mu_{\f} \ts{D}_{\f} \colondot \Grad(\delta\bv{\chi}_{\s} - \ts{F}_{\s}\ts{F}_{\frs}^{-1} \delta\bv{\chi}_{\frs})^{\circd{e}}
\\
&\qquad\quad
-
2 \mu_{B} (\ts{D}_{\f} - \ts{D}_{\s}) \colondot \Grad(\ts{F}_{\s}\ts{F}_{\frs}^{-1}\delta\bv{\chi}_{\frs})^{\circd{e}}
\\
&\qquad\quad
-
\frac{\mu_{D} \phi_{\f}^{2}}{\kappa_{\s}}(\bv{v}_{\f}^{\circd{e}} - \bv{v}_{\s}^{\circd{e}}) \cdot 
(\ts{F}_{\s}\ts{F}_{\frs}^{-1} \delta\bv{\chi}_{\frs})^{\circd{e}}
\Bigr)^{\circd{s}}
\\
&\qquad
- 
\int_{\mathcal{S}_{\s}}
\tfrac{1}{2} \mu_{\mathcal{S}}
\llbracket
 \phi_{\f}^{\circd{s}} (\bv{v}_{\f}^{\circd{s}} - \bv{v}_{\s})
\rrbracket
\cdot
\llbracket
\phi_{\f}^{\circd{s}} \ts{F}_{\s}\ts{F}_{\frs}^{-1} \delta\bv{\chi}_{\frs} \rrbracket
J_{\s} \|\inversetranspose{\ts{F}}_{\s} \bv{m}_{\s} \|.
\end{aligned}
\end{equation}
The above expression can be rewritten as
\begin{equation}
\label{eq: variation of R step 2}
\begin{aligned}[b]
\delta \mathscr{R} &= 
\int_{B_{\s}}
J_{\s}
\Bigl(
2 \mu_{\f} \ts{D}_{\f} \colondot \Grad\delta\bv{\chi}_{\s}^{\circd{e}}
-  \ts{T}_{\f}^{v}
\colondot \Grad(\ts{F}_{\s}\ts{F}_{\frs}^{-1}\delta\bv{\chi}_{\frs})^{\circd{e}}
\\
&\qquad\quad
-
\frac{\mu_{D} \phi_{\f}^{2}}{\kappa_{\s}}(\bv{v}_{\f}^{\circd{e}} - \bv{v}_{\s}^{\circd{e}}) \cdot 
(\ts{F}_{\s}\ts{F}_{\frs}^{-1} \delta\bv{\chi}_{\frs})^{\circd{e}}
\Bigr)^{\circd{s}}
\\
&\quad
- 
\int_{\mathcal{S}_{\s}}
\bigl\llbracket
\tfrac{1}{2} \mu_{\mathcal{S}}
\llbracket
 \phi_{\f}^{\circd{s}} (\bv{v}_{\f}^{\circd{s}} - \bv{v}_{\s})
\rrbracket
\cdot
\phi_{\f}^{\circd{s}} \ts{F}_{\s}\ts{F}_{\frs}^{-1} \delta\bv{\chi}_{\frs} \bigr\rrbracket
J_{\s} \|\inversetranspose{\ts{F}}_{\s} \bv{m}_{\s} \|.
\end{aligned}
\end{equation}
Using the fact that for a field with domain in $B(t)$ like, say, $\ts{T}_{\f}$, we have
\begin{equation*}
J_{\s} (\Div{\ts{T}_{\f}})^{\circd{s}} = \Div{(J_{\s} \ts{T}_{\f}^{\circd{s}}\inversetranspose{\ts{F}}_{\s})},
\end{equation*}
we can further rewrite Eq.~\eqref{eq: variation of R step 2} as
\begin{equation}
\label{eq: variation of R step 3}
\begin{aligned}[b]
\delta \mathscr{R} &= 
\int_{B_{\s}}
\biggl[
\Div{
(\transpose{(2 \mu_{\f} J_{\s} \ts{D}_{\f}^{\circd{s}} \inversetranspose{\ts{F}}_{\s})}
\delta\bv{\chi}_{\s})
}
\\
&\qquad\quad
-  \Div{(\transpose{(J_{\s}(\ts{T}_{\f}^{v})^{\circd{s}}\inversetranspose{\ts{F}}
_{\s})} \ts{F}_{\s}\ts{F}_{\frs}^{-1}\delta\bv{\chi}_{\frs})
}
\\
&\qquad\quad
- \Div{
(2 \mu_{\f} J_{\s} \ts{D}_{\f}^{\circd{s}} \inversetranspose{\ts{F}}_{\s})}
 \cdot \delta\bv{\chi}_{\s}
 +
 \Div{(J_{\s}(\ts{T}_{\f}^{v})^{\circd{s}}\inversetranspose{\ts{F}}
_{\s})} \cdot \ts{F}_{\s}\ts{F}_{\frs}^{-1}\delta\bv{\chi}_{\frs}
\\
&\qquad\quad
-
J_{\s} \Bigl( \frac{\mu_{D} \phi_{\f}^{2}}{\kappa_{\s}}(\bv{v}_{\f}^{\circd{e}} - \bv{v}_{\s}^{\circd{e}}) \cdot 
(\ts{F}_{\s}\ts{F}_{\frs}^{-1} \delta\bv{\chi}_{\frs})^{\circd{e}}
\Bigr)^{\circd{s}}
\biggr]
\\
&\quad
- 
\int_{\mathcal{S}_{\s}}
\bigl\llbracket
\tfrac{1}{2} \mu_{\mathcal{S}}
\llbracket
 \phi_{\f}^{\circd{s}} (\bv{v}_{\f}^{\circd{s}} - \bv{v}_{\s})
\rrbracket
\cdot
\phi_{\f}^{\circd{s}} \ts{F}_{\s}\ts{F}_{\frs}^{-1} \delta\bv{\chi}_{\frs} \bigr\rrbracket
J_{\s} \|\inversetranspose{\ts{F}}_{\s} \bv{m}_{\s} \|.
\end{aligned}
\end{equation}
Applying integration by parts, we then obtain
\begin{equation}
\label{eq: variation of R step 4}
\begin{aligned}[b]
\delta \mathscr{R} &= 
-\int_{\mathcal{S}_{\s}}
\llbracket
2 \mu_{\f}  \ts{D}_{\f}^{\circd{s}}
\rrbracket
J_{\s}
\inversetranspose{\ts{F}}_{\s}
\bv{m}_{\s}
\cdot
\delta\bv{\chi}_{\s}
\\
&\qquad
+
\int_{\mathcal{S}_{\s}}
\llbracket
J_{\s}(\ts{T}_{\f}^{v})^{\circd{s}}
\inversetranspose{\ts{F}}
_{\s} \bv{m}_{\s}
\cdot
\ts{F}_{\s}\ts{F}_{\frs}^{-1}\delta\bv{\chi}_{\frs}
\rrbracket
\\
&\qquad
+
\int_{B_{\s}}
\biggl[
- \Div{
(2 \mu_{\f} J_{\s} \ts{D}_{\f}^{\circd{s}} \inversetranspose{\ts{F}}_{\s})}
 \cdot \delta\bv{\chi}_{\s}
 +
 \Div{(J_{\s}(\ts{T}_{\f}^{v})^{\circd{s}}\inversetranspose{\ts{F}}
_{\s})} \cdot \ts{F}_{\s}\ts{F}_{\frs}^{-1}\delta\bv{\chi}_{\frs}
\\
&\qquad\quad
-
J_{\s} \Bigl(\frac{\mu_{D} \phi_{\f}^{2}}{\kappa_{\s}}(\bv{v}_{\f}^{\circd{e}} - \bv{v}_{\s}^{\circd{e}}) \cdot 
(\ts{F}_{\s}\ts{F}_{\frs}^{-1} \delta\bv{\chi}_{\frs})^{\circd{e}}
\Bigr)^{\circd{s}}
\biggr]
\\
&\quad
- 
\int_{\mathcal{S}_{\s}}
\bigl\llbracket
\tfrac{1}{2} \mu_{\mathcal{S}}
\llbracket
 \phi_{\f}^{\circd{s}} (\bv{v}_{\f}^{\circd{s}} - \bv{v}_{\s})
\rrbracket
\cdot
\phi_{\f}^{\circd{s}} \ts{F}_{\s}\ts{F}_{\frs}^{-1} \delta\bv{\chi}_{\frs} \bigr\rrbracket
J_{\s} \|\inversetranspose{\ts{F}}_{\s} \bv{m}_{\s} \|.
\end{aligned}
\end{equation}

The claim follows from substituting the contributions from Eqs.~\eqref{eq: variation of K}, \eqref{eq: Variation of U}--\eqref{eq: variation of surface constraint}, and Eq.~\eqref{eq: variation of R step 4} into Eq.~\eqref{eq: Hamiltons principle paradigm}, applying a localization argument in time, and recalling the definitions for $\ts{T}_{\s}$, $\ts{T}_{\f}$, and $\bv{p}_{\f}$. 
\end{proof}

\section*{Acknowledgments}
The authors gratefully acknowledge partial support from the Pennsylvania Department of Health using Tobacco CURE Funds (the Department specifically disclaims responsibility for any analyses, interpretations, or conclusions).

The authors also wish to express their gratitude to Profs.~Bruce J.~Gluckman and Patrick J.~Drew, both of the Penn State Center for Neural Engineering, and to Prof.~Anna Mazzucato of the Penn State Department of Mathematics for their feedback and suggestions.

} 
\bibliographystyle{ws-m3as} 
\bibliography{2025CostanzoEtAlJumpConditionsPoroelsticMedia_arXiv}
\end{document}